\newtheorem{theorem}{Theorem}[section]
\newtheorem{corollary}[theorem]{Corollary}
\newtheorem{proposition}[theorem]{Proposition}
\newtheorem{lemma}[theorem]{Lemma}
\theoremstyle{definition}
\newtheorem{definition}[theorem]{Definition}
\theoremstyle{definition}
\newtheorem*{definition*}{Notational Conventions}
\theoremstyle{remark}
\newtheorem{remark}[theorem]{Remark}
\theoremstyle{definition}
\newtheorem{example}[theorem]{Example}
\theoremstyle{definition}
\newtheorem{construction}[theorem]{Construction}
\theoremstyle{definition}
\theoremstyle{definition}
\newtheorem*{thma}{Theorem A}
\theoremstyle{definition}
\newtheorem*{thmb}{Theorem B}
\newcommand{\dom}{\mathrm{dom}}
\newcommand{\Ran}{\mathrm{Ran}}
\DeclareMathOperator{\Hom}{\mathrm{Hom}}
\DeclareMathOperator{\Ext}{\mathrm{Ext}}
\title{Definable Obstruction theory}
\author{Nicholas J. Meadows}
\begin{document}
\maketitle

\begin{abstract}
A series of recent papers by Bergfalk, Lupini and Panagiotopoulus developed the foundations of a field known as `definable algebraic topology,' in which classical cohomological invariants are enriched by viewing them as groups with a Polish cover. This allows one to apply techniques from descriptive set theory to the study of cohomology theories. In this paper, we will establish a `definable' version of a classical theorem from obstruction theory, and use this to study the potential complexity of the homotopy relation on the space of continuous maps $C(X, |K|)$, where $X$ is a locally compact Polish space, and K is a locally finite countable simplicial complex. We will also characterize the Solecki Groups of the \v{C}ech cohomology of X, which are the canonical chain of subgroups with a Polish cover that are least among those of a given complexity.
\end{abstract}

\section*{Introduction}

A Polish space is a separable, completely metrizable space. 
A major theme of research in descriptive set theory is the study of the relative complexity of equivalence relations on Polish spaces under the notion of Borel reducibility.
A particularly important class of equivalence relations are orbit equivalence relations (see \cite[Chapter 3]{gao_invariant_2009}). These arise from the continuous action of a Polish group (i.e. a topological group whose underlying space is Polish) on a Polish space. Specializing further, we consider the left translation action of a continuous homomorphism $\phi : H \rightarrow G$ of Polish groups. The image of such a homomorphism is called a \emph{Polishable subgroup} of $G$ and $G/ \phi(H)$ is called a \emph{group with a Polish cover}. In \cite{lupini_complexity_2022}, the author studied the \emph{Solecki groups} $s_{\alpha}^{\phi(H)}(G)$, where $\alpha < \omega_1$ is an ordinal and $s_{\alpha}^{\phi(H)}(G)$ is the smallest $\mathbf{\Pi}_{1 +\alpha +1}^{0}$ Polishable subgroup of G containing $\phi(H)$. In particular the \emph{Solecki rank}, the smallest $\alpha$ such that $s_{\alpha}^{\phi(H)}(G) = H$, was related to the potential Borel complexity of the orbit equivalence relation.

The papers \cite{bergfalk_definable_2020} and \cite{bergfalk_definable_2024} initiated the research programme known as `Definable Algebraic Topology', in which objects from algebraic topology are viewed as groups with a Polish cover and analyzed from the perspective of descriptive set theory.
 One of their main constructions was an analogue of the classical \v{C}ech cohomology functor on locally compact Polish spaces taking values in the category of groups with Polish cover. As an application, this functor was used to classify the homotopy types of mapping telescopes of d-tori, something that cannot be done with classical \v{C}ech cohomology. Indeed, there exist uncountable families of pairwise homotopy inequivalent mapping
 telescopes of d-tori on which the classical \v{C}ech cohomology is constant.

Recall that an \emph{Eilenberg-Mac Lane space of type $(G, n)$ } is a space $K( G, n)$ that satisfies:
$$
\pi_{m}(K( G, n)) = 
\left\{
	\begin{array}{ll}
		G  & \mbox{if } m = n \\
		0 & \mbox{if } otherwise
			\end{array}
\right.
$$
One of the main theorems of \cite{bergfalk_definable_2024} is the definable version of the Huber isomorphism. 
Given a locally compact Polish space $X$, a countable Polish group $G$ and $n \ge 2$, there is a `definable' bijection:
$$
[X, K( G, n)] \cong \check{H}^{n}(X; G).
$$
where $[X, K(G, n)]$ denotes the homotopy classes of continuous maps between $X$ and $\check{H}^{n}$ is the \v{C}ech cohomology functor (see \cref{prop1.8} for the sense in which definable is used here). 
Thus, we can study the potential complexity of the homotopy relation on $C(X, Y)$ (i.e. the continuous functions between Polish spaces $X$ and $Y$ equipped with the compact open topology) by studying the potential complexity of the \v{Cech} cohomology functor from \cite{bergfalk_definable_2024}, at least in the case that $Y$ is an Eilenberg-Mac Lane space. As we will see, in certain situation this reduces to the potential complexity of certain $\mathrm{Ext}$ groups, which was studied in \cite{casarosa_bootstrap_2025}.

The significance of this result is that Eilenberg-Maclane spaces are in some sense the `building blocks' of homotopy types. Indeed, every simply-connected CW complex $X$ is the inverse limit of its Postnikov tower of fibrations
$$
P_{n}(X) \xrightarrow{\phi_{n}} P_{n-1}(X) \cdots
$$ 
and there is a fiber sequence 
$$
K(\pi_{n}(X), n) \rightarrow P_{n}(X) \xrightarrow{\phi_{n}} P_{n-1}(X)
$$

One approach to obstruction theory (see \cite[Chapter 4.3]{hatcher_algebraic_2002}) yields cohomological obstructions to finding lifts of a map $f : X \rightarrow P_{n}(Y)$ to successive stages of the Postnikov tower of $Y$ (i.e. starting from the initial stage which is an Eilenberg-Maclane space). Thus, it allows one to study the homotopy classes of maps $[X, Y]$ in situations where $Y$ is more complicated than an Eilenberg-Maclane space. 

Unfortunately, the aforementioned obstruction cohomology classes are dependent on the k-invariants of a space, which are quite difficult to understand conceptually. Thus, in order to extend the analysis of the potential complexity of the homotopy relation from \cite{bergfalk_definable_2024}, we will adapt a more classical approach to obstruction theory from \cite[Chapter 18]{fomenko_homotopical_2016} and \cite[Chapter VI]{hu_homotopy_1959} to the definable setting. 

 Essentially, this approach to obstruction theory gives a series of cohomology classes whose vanishing allows us to find successive extensions (up to homotopy) of a map $f : X^{n} \rightarrow Y$ to the higher dimensional skeleta of $X$:
$$
\xymatrix
{
	\vdots & \\
	X^{n+2}  \ar@{.>}[ddr]\ar[u] & \\
	X^{n+1} \ar@{.>}[dr]\ar[u] & \\
	X^{n} \ar[r] \ar[u]   & Y
}
$$
for CW complexes $X$ and $Y$.

The first major result of this paper is a `definable' version of one of the central results of obstruction theory (c.f.  \cite[Theorem VI.16.6]{hu_homotopy_1959}):

\begin{thma}\label{ta}
Suppose that $X$ is a locally compact Polish space, and that $K$ is a locally finite countable simplicial complex. Let $ \check{H}^{m}(X; -)$ denote the $m$th \v{C}ech cohomology functor. Suppose that 
\begin{enumerate}
\item{$K$ is $n-1$-connected}
\item{$X$ has finite covering dimension}
\item{$\check{H}^{m+1}(X; \pi_{m}|K|) = \check{H}^{m}(X; \pi_{m}|K|) = 0$ for all $m \ge n+1$}
\end{enumerate}
Then there is a definable bijection $[X, Y] \rightarrow \check{H}^{n}(X;\pi_{n}K)$.
\end{thma}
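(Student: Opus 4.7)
The plan is to factor the desired bijection through the $n$-th stage of the Postnikov tower of $|K|$ and then invoke the definable Huber isomorphism of \cite{bergfalk_definable_2024}. Since $|K|$ is $(n-1)$-connected, its Postnikov tower $\cdots \to P_{m}(|K|) \xrightarrow{q_m} P_{m-1}(|K|) \to \cdots$ satisfies $P_n(|K|) \simeq K(\pi_n K, n)$, and each $q_m$ is a principal fibration with fiber $K(\pi_m K, m)$. I would show that, under the hypotheses, the canonical map $[X, |K|] \to [X, P_n(|K|)]$ is a definable bijection, and then compose with the Polish-cover identification $[X, K(\pi_n K, n)] \cong \check{H}^n(X; \pi_n K)$ to finish.

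The first step is classical in nature. Setting $d = \dim X$, the fact that $\check{H}^{m}(X; G) = 0$ for $m > d$ and any abelian $G$, combined with hypothesis (3), yields $\check{H}^{m+1}(X; \pi_m K) = \check{H}^m(X; \pi_m K) = 0$ for every $m \ge n+1$. By obstruction theory for the principal fibration $K(\pi_m K, m) \to P_m(|K|) \to P_{m-1}(|K|)$, the obstruction to lifting a map $f \colon X \to P_{m-1}(|K|)$ lies in $\check{H}^{m+1}(X; \pi_m K)$, and the set of homotopy classes of such lifts forms a torsor over $\check{H}^m(X; \pi_m K)$. Both groups vanish in our range, so $(q_m)_* \colon [X, P_m(|K|)] \to [X, P_{m-1}(|K|)]$ is a bijection for every $m \ge n+1$. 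Iterating these bijections gives $[X, |K|] \xrightarrow{\cong} [X, P_n(|K|)]$; the passage from $\varprojlim_m P_m(|K|)$ back to $|K|$ is unproblematic because only finitely many Postnikov stages contribute nontrivially, thanks to the covering-dimension bound.

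To promote this bijection to a definable one, I would fix Polish models of each $P_m(|K|)$ in which $q_m$ is a Hurewicz fibration, and exhibit a Borel section of the restriction map $C(X, P_m(|K|)) \to C(X, P_{m-1}(|K|))$ at each stage where the relevant cohomology groups vanish. Concatenating these Borel sections gives a Borel map $C(X, K(\pi_n K, n)) \to C(X, |K|)$ that realizes the inverse of the Postnikov truncation at the level of homotopy classes. Composition with the definable Huber isomorphism of \cite{bergfalk_definable_2024} then delivers the required definable bijection $[X, |K|] \to \check{H}^n(X; \pi_n K)$.

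The main obstacle is the Borel-uniformity of these lifts. The classical obstruction-theoretic construction proceeds cell-by-cell and involves non-canonical choices of nullhomotopies of obstruction cocycles, and one has to show that these choices can be performed Borel in the input map $f$. Following the pattern of \cite{bergfalk_definable_2024}, I expect this to be achieved by representing $X$ as an inverse limit of finite simplicial complexes via a nerve-of-cover construction (which exploits both local compactness and finite covering dimension), performing the obstruction-theoretic lifting on each finite approximation where everything is strictly combinatorial, and then invoking a Borel selection theorem to assemble compatible choices. Hypothesis (2) plays its essential role here, both in bounding the dimensions of the approximating complexes and in truncating the Postnikov tower after finitely many stages, which together keep the Borel selection problem at each level tractable.
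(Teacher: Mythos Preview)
Your route via the Postnikov tower is genuinely different from the paper's, and is in fact the approach the paper explicitly declines in its introduction. The paper instead runs the skeleton-by-skeleton extension theory of Hu and Fomenko: it first produces, via the definable simplicial approximation theorem of \cite{bergfalk_definable_2024}, a factorization of $f$ through a nerve $|N(\mathcal{U}_\alpha^X)|$, then assigns to it the classical obstruction class $\chi^n(f_\alpha)=[d^n(f_\alpha,0;h)]$; bijectivity is obtained by a cover-refinement argument (Lemmas~5.11--5.12) rather than by lifting through fibrations. One pleasant feature of your framing is that the \emph{forward} Borel map comes essentially for free---postcompose with a fixed simplicial map $|K|\to K(\pi_nK,n)$ and then apply the definable Huber map---so no Borel selection is needed in that direction, and the difficulty you single out is not where the real work lies.

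The genuine gap is in the classical bijection $[X,|K|]\cong[X,P_n(|K|)]$. The obstruction theory you quote for the principal fibrations $K(\pi_mK,m)\to P_m(|K|)\to P_{m-1}(|K|)$---obstruction in $\check H^{m+1}(X;\pi_mK)$, lifts a torsor for $\check H^m(X;\pi_mK)$---is standard only for CW domains; for $X$ merely locally compact Polish there are no cells over which to extend. One is forced to pass to nerves $|N(\mathcal U_\alpha^X)|$, but then the obstruction lives in $H^{m+1}(|N(\mathcal U_\alpha^X)|;\pi_mK)$, which need \emph{not} vanish for an individual $\alpha$ even though its \v Cech colimit does. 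The correct maneuver is to kill the obstruction by refining $\alpha$, and that is exactly the content of the paper's Lemmas~5.11--5.12; your final paragraph effectively concedes this, at which point the Postnikov packaging is no longer saving any work over the paper's direct argument. A secondary issue is that to treat $C(X,P_m(|K|))$ and its homotopy relation as a definable set you need each intermediate $P_m(|K|)$ realized as a locally finite countable polyhedron, which is not automatic since Postnikov sections are infinite-dimensional.
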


It is a strengthening of the definable Huber isomorphism and allows one to study the potential Borel complexity of the homotopy relation for a much wider range of examples than is possible with the machinery from \cite{bergfalk_definable_2024}. One particularly important example, dealt with in \cref{sec7}, is the potential complexity of the homotopy relation on $C(Y , S^{n})$, where $S^{n}$ is a sphere and $Y$ is a locally compact Polish space.

The first essential ingredient of the proof is, perhaps unsurprisingly, the classical obstruction theory from \cite[Chapter 18]{fomenko_homotopical_2016}. The second, less obvious one is the observation that the construction of definable \v{C}ech cohomology from \cite{bergfalk_definable_2024} can viewed through the lens of the branch of topology known as shape theory (c.f. \cite{mardesic_shape_1982}). Shape theory is a modification of traditional homotopy theory which accommodates more general spaces than CW complexes, such as locally compact Polish spaces. 

The other main theorem of the paper is the following:

\begin{thmb}
Suppose that $G$ is a Polish abelian group and $X$ is a locally compact Polish space with exhaustion $X_{0} \subseteq X_{1} \subseteq X_{2} \cdots$, and $n \ge 2$. Then we have the following inductive formula for the Solecki subgroups of $\check{H}^{n}(X; G)$:
\begin{enumerate}
\item{$s_{0}(\check{H}^{n}(X; G)) =  \bigcap_{i \in \mathbb{N}} \mathrm{Ran}(\check{H}^{n}(X, X_{i}; G) \rightarrow \check{H}^{n}(X; G)) $.}
\item{For any successor ordinal $\alpha+1$, we have: $$s_{\alpha+1}(\check{H}^{n}(X; G))  =  \bigcap_{i \in \mathbb{N}} \Ran(s_{\alpha}\check{H}^{n}(X, X_{i}; G) \rightarrow \check{H}^{n}(X; G)) $$}
\item{For any limit ordinal $\lambda$, we have:
$$
s_{\lambda}(\check{H}^{n}(X; G)) = \bigcap_{\lambda' < \lambda}  s_{\lambda'}(\check{H}^{n}(X; G))
$$}
\end{enumerate}

\end{thmb}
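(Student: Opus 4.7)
The plan is to combine the shape-theoretic description of definable \v{C}ech cohomology from \cite{bergfalk_definable_2024} with the long exact sequence of the pair $(X, X_i)$ and general structural properties of Solecki subgroups from \cite{lupini_complexity_2022}.

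The preliminary task is to equip each relative group $\check{H}^n(X, X_i; G)$ with a canonical Polish cover structure (via a polyhedral/inverse-system expansion of the pair $(X, X_i)$) and to verify that the connecting map $\delta_i \colon \check{H}^n(X, X_i; G) \to \check{H}^n(X; G)$ and the restriction $\rho_i \colon \check{H}^n(X; G) \to \check{H}^n(X_i; G)$ are definable homomorphisms. Exactness at $\check{H}^n(X; G)$ then identifies $\Ran(\delta_i) = \ker(\rho_i)$, so the intersection appearing in (1) is precisely $\bigcap_i \ker(\rho_i)$, namely the kernel of the definable map $\check{H}^n(X; G) \to \prod_i \check{H}^n(X_i; G)$, which by a Milnor-type sequence for $X = \bigcup_i X_i$ coincides with $\lim^{1} \check{H}^{n-1}(X_i; G)$.

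Part (3) for limit ordinals is immediate from the definition of the Solecki hierarchy. For part (1), I would invoke the characterization of $s_0(G/N)$ as $\overline{N}/N$ in the Polish cover. Continuity of restriction yields $s_0 \subseteq \bigcap_i \ker(\rho_i)$ directly. The reverse inclusion --- the heart of this step --- requires showing that a cocycle which is a coboundary on each $X_i$ can be approximated in the Polish cover topology by global coboundaries; I would carry this out by a diagonal argument along the exhaustion, leveraging that cocycles on $X$ can be assembled from cocycles on the $X_i$ via the polyhedral expansion, together with paracompactness and finite covering dimension being implicit in the \v{C}ech setup.

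Part (2) proceeds by transfinite induction on $\alpha$. The inductive step relies on a general lemma to be established: for a definable surjection $\phi \colon A \twoheadrightarrow B$ of groups with Polish cover, $\phi(s_\alpha A) = s_\alpha B$. Applying this to the surjection $\check{H}^n(X, X_i; G) \twoheadrightarrow \ker(\rho_i)$ (whose kernel is the image of $\check{H}^{n-1}(X_i; G)$ under the boundary map in the pair sequence) identifies $\Ran(s_\alpha \check{H}^n(X, X_i; G) \to \check{H}^n(X; G))$ with $s_\alpha(\ker \rho_i)$, and intersecting over $i$ should produce exactly $s_{\alpha+1}(\check{H}^n(X; G))$. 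The principal obstacle, and the technical heart of the proof, is verifying that the countable intersection $\bigcap_i s_\alpha(\ker \rho_i)$ lies one step higher in the Solecki hierarchy rather than remaining at level $\alpha$: this requires careful tracking of the \emph{Polishable} topologies on these subgroups (as opposed to bare Borel complexity), and is where the hypothesis that $X$ is exhausted by the $X_i$ enters in an essential way, since the induced Polish topology on the intersection is genuinely finer than on any single factor.
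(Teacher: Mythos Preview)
Your approach is genuinely different from the paper's, and it has a real gap.

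The paper does not argue via the long exact sequence of the pair, diagonal approximation, or an explicit transfinite induction on Solecki levels. Instead, it recognizes Theorem~B as a special case of a general result about cohomological functors into the left heart $\mathbf{LH}(\mathbf{PMod})$. Concretely, the paper takes the Polish cochain complex $\mathcal{C}^{*}(\mathfrak{U};G)$ of $X$, observes that (after truncation and shift) it is the homotopy colimit in $D^{b}(\mathbf{PMod})$ of the cochain complexes of the compact pieces $X_i$, and then applies \cite[Theorem~8.2]{casarosa_bootstrap_2025}: for a cohomological functor $F$ with $F(X_i)$ countable, the phantom subfunctors $\mathrm{Ph}^{\alpha}(F(X))$ coincide with the Solecki subgroups $s_{\alpha}F(X)$. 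The only work left is to check that $\check{H}^{n}(X_i;G)$ is countable, which uses compactness of $X_i$ and countability of $G$. All of the delicate Solecki-hierarchy bookkeeping is thus outsourced to the black box from \cite{casarosa_bootstrap_2025}.

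Your route, by contrast, tries to carry out that bookkeeping by hand, and the place where it breaks is exactly the step you flag as the ``principal obstacle'': showing that $\bigcap_i s_{\alpha}(\ker\rho_i)$ equals $s_{\alpha+1}(\check{H}^{n}(X;G))$ rather than merely $s_{\alpha}$. You correctly identify that this requires comparing the Polishable topologies, but you offer no mechanism for doing so beyond an appeal to the exhaustion being ``genuinely finer''. This is not an argument; it is a restatement of what must be shown. In the abstract setting of \cite{casarosa_bootstrap_2025} this step is handled by a careful analysis of how phantom filtrations interact with homotopy colimits of compact objects and with the countability hypothesis on $F(X_i)$---a hypothesis you never invoke, and without which the statement can fail. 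Similarly, your diagonal argument for the reverse inclusion in part~(1) is plausible in outline but would need the countability of $\check{H}^{n}(X_i;G)$ to guarantee that ``coboundary on each $X_i$'' is a closed condition whose intersection is exactly the closure of global coboundaries; you mention finite covering dimension, which is irrelevant here. If you want to make your approach work, you would essentially end up reproving the relevant portion of \cite[Theorem~8.2]{casarosa_bootstrap_2025} in this special case.
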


This theorem is proven by using a theorem from \cite{casarosa_bootstrap_2025} about the Solecki groups of the image of cohomological functors $F : \mathcal{T} \rightarrow \mathrm{LH}(\mathbf{PMod})$, where $ \mathrm{LH}(\mathbf{PMod})$ is the left heart of the category of Polish modules (see Section 6 below).
 
The paper is organized as follows. In the first section, we will describe the descriptive set theoretic preliminaries needed to understand the paper. The section will begin by reviewing the Borel hierarchy and reducibility for equivalence relations. We will briefly describe the Solecki subgroups of a Polishable group \cite{lupini_complexity_2022} . This is essential for studying the potential complexity of the $\mathrm{Ext}$ functor. 

The second section of the paper will review concepts from the papers \cite{bergfalk_definable_2024} and \cite{bergfalk_definable_2020}. In particular, we will discuss the definable versions of cohomology and the Ext functor. We will also discuss the definable versions of the homotopy extension and universal coefficient theorems. 

In the third section, we will review a relative version of the classical obstruction theory for $CW$-complexes from \cite{fomenko_homotopical_2016}.
In the fourth section, we will discuss shape theory. 
In the fifth section, we will use our collected results about the shape theory and classical obstruction theory to prove Theorem A above. 

In the sixth section, we will characterize the Solecki groups of the cohomology. This is done by applying the theory of phantom subfunctors from \cite{casarosa_bootstrap_2025}. 
The seventh and final section will give examples of the potential Borel complexity of the homotopy relation on $C(X, |K|)$, where $|K|$ is the geometric realization of a countable locally finite simplicial complex and $X$ is a locally compact Polish space.   

 \section*{Acknowledgement}
 
 The author was supported by the Starting Grant 101077154 “Definable Algebraic Topology” from the European Research
Council awarded to Martino Lupini.

\section*{Topological Preliminaries}

In this brief section, we will briefly outline some topological terminology that will be used in the paper.
\\

A \emph{simplicial complex} $K$ is a family of finite sets that is closed downwards i.e. $\sigma \subseteq \tau \in K \implies \sigma \in K$. We call $K$ a subcomplex of $L$ if $K \subseteq L$. A \emph{simplex} or \emph{face} of a simplicial complex is an element of $\sigma \in K$;  $\sigma $ is an n-simplex if $|\sigma|= n+1$. A \emph{vertex} is a 0-simplex. We will write $K^{(n)}$ for the set of \emph{$n$-simplices} of $X$. We often also write $\mathrm{dom}(K) := \cup_{\sigma \in K} \sigma = K^{(0)}$. Given any two simplicial complexes $K, L$, a \emph{simplicial map} is a function $f : \mathrm{dom}(K) \rightarrow \mathrm{dom}(L)$ such that if $\sigma $ is a simplex of $K$, then  $f(\sigma)$ is a simplex of $L$. For each simplicial complex $K$, we will write $K^{n}$ for the n-skeleton of $K$, defined by $K^{n} := \bigcup_{m \le n} K^{(m)}$. 

A simplicial complex is \emph{finite} if it has finitely many vertices, and \emph{countable }if it has countably many vertices. It is \emph{locally finite} if each vertex belongs to finitely many simplices. 

Suppose that $K$ is a countable simplicial complex. We can assume that $\mathrm{dom}(K) \subseteq \mathbb{N}$. Let $\{ e_{p} \}_{p \in \mathbb{N}}$ be the canonical basis of $\mathbb{R}^{\mathbb{N}}$. Then, the geometric realization of $\sigma = \{ i_{0} \cdots, i_{p}\}$ is defined to be 
$$
|\sigma| = \{ t_{0}e_{i_{0}} + \cdots + t_{p}e_{i_{p}} :  t_{0}, \cdots t_{p} \in [0, 1]. t_{0} + \cdots + t_{p} = 1  \}
$$
We will write $|K|$ for the union of the geometric realizations of all simplices of $K$. Any space homeomorphic to the geometric realization of a simplicial complex is called a \emph{Polyhedron}. Any simplicial map $f$ induces a continuous function of geometric realizations, which we denote $|f|$. It is given by the formula:
$$
|f|(t_{0}e_{i_{0}} + \cdots + t_{p}e_{i_{p}}) = t_{0}f(e_{i_{0}}) + \cdots + t_{p}f(e_{i_{p}})
$$

For a vertex $v \in \dom(K)$, we will write $\mathrm{St}_v(K)$ for the unions of the interiors of the geometric realizations of simplices containing $v$. In particular, we have:
$$
\mathrm{St}_v(K) = \bigl\{ \sum_{w \in \mathrm{dom}(K)} t_{i}a_{w} : a_{v} > 0 \bigr\} \cap |K|
$$
\\

An \emph{open cover} of a space $X$ is a collection $\mathcal{U}$ of open sets such that $\cup_{U \in \mathcal{U}} U = X$. An open cover is \emph{locally finite} if for each $x \in X$ there is a neighborhood that intersects only finitely many elements of the cover. Given a covering $\mathcal{U}$ of $X$ and $K \subseteq X$, write $K \upharpoonright \mathcal{U} := \{ U \in \mathcal{U} : U \cap K \neq \emptyset\}$. A \emph{refinement map} between covers of $X$ is a map $F : \mathcal{V} \rightarrow \mathcal{U}$ such that $U \subseteq F(U)$. If a refinement map exists, we say that $\mathcal{V}$ refines $\mathcal{U}$. We will denote by $Cover(X)$, the category of all coverings of $X$ and coverings of $X$ and refinement maps. If $\mathcal{V}$ refines $\mathcal{U}$, we write $\mathcal{U} \prec \mathcal{V}$.

 Recall that the \emph{nerve} of a cover $\mathcal{U}$ is the simplicial complex which has an n-simplex for each nonempty intersection $U_{1} \cap U_{2} \cdots \cap U_{n}$ of elements of the cover, we write it as $N(\mathcal{U})$. It should be noted that each refinement map $F$ induces a map of nerves $N(F) : N(\mathcal{V}) \rightarrow N(\mathcal{U})$.
 \\

We will write $S^{n}, D^{n}$, respectively, for the $n$-sphere and the $n$-disk. We will often write $A \approx B$ to indicate a homeomorphism between $A, B$.
We will write $\pi_{n}(X)$ for the $n$th homotopy group of a space $X$. Given two topological spaces $X, Y$, we will write $[X, Y]$ for the homotopy classes of maps from $X$ to $Y$. Similarly, if we have two topological pairs $(X, L), (Y, M)$, write $[(X, L), (Y, M)]$ for the relative homotopy classes of maps between the two pairs. We will write $f \simeq g$ if $f, g$ are homotopic maps of spaces (or pairs). We will write $C(X, Y)$ ($C((X, L), (Y, M))$) for the continuous maps between $X, Y$, ($(X, L), (Y, M)$) equipped with the compact-open topology.

 Given a CW complex $X$, we will write $X^{n}$ for its n-skeleton. A map $f : X \rightarrow Y$ of CW complexes is \emph{cellular} if $f(X^{n}) \subseteq Y^{n}$ for each $n \in \mathbb{N}$. If $G$ is an abelian group and $(X, L)$ is a CW pair we will write $H^{n}(X; G)$ ($H^{n}(X, L; G)$) for the (relative) cellular cohomology of $X$ with coefficients in $G$ \cite[Section 2.2]{hatcher_algebraic_2002}. As is well-known, this is isomorphic to the usual (relative) singular cohomology.
  It is important to note that if $K$ is a simplicial complex, then $|K|$ is a CW complex with an $n$-cell for each $n$-simplex of $K$, and we have an isomorphism of groups $H^{n}(|K|; G) \cong H^{n}(K; G)$ between the cellular cohomology of the geometric realization and the simplicial cohomology of the complex (as in \cref{con2.3} below). We will also write  $\mathcal{C}^{n}(X; G)$
 ($\mathcal{C}^{n}(X, L; G)$), $\mathcal{Z}^{n}(X; G)$
 ($\mathcal{Z}^{n}(X, L; G)$),  $\mathcal{B}^{n}(X; G)$
 ($\mathcal{B}^{n}(X, L; G)$),  
 for the (relative) cochains, (relative) cocycles and (relative) coboundaries in cellular cohomology.

\section{Preliminaries on Descriptive Set Theory}\label{sec1}

The purpose of this section is to briefly review some definitions from descriptive set theory that will be useful for our later discussions. Of particular importance are the Solecki subgroups. 
\\

A \emph{Polish space} is a separable, complete metrizable space. A \emph{Polish group} is a topological group whose underlying space is a Polish space. 
A \emph{complexity class} $\Gamma$ is a function $\Gamma$ which assigns each Polish space $X$ a set of Borel subsets of $X$, such that if $f : X \rightarrow Y$ is a continuous function and $V \in \Gamma(X)$ then $f^{-1}(V) \in \Gamma(Y)$. Of particular importance are the complexity classes $\mathbf{\Sigma}_{\alpha}^{0}, \mathbf{\Pi}_{\alpha}^{0},  D(\mathbf{\Sigma}_{\alpha}^{0})$ for $\alpha < \omega_1$ an ordinal (see \cite[Section 11.B]{kechris_classical_1995}).

\begin{definition}\label{def1.1}
Suppose that $X, Y$ are sets. 
Suppose that $E$ is an equivalence relation on $X$ and $F$ is an equivalence relation on $Y$. Then we say that $f : X \rightarrow Y$ is a reduction of $E$ to $F$ if 
$$
xEy \Leftrightarrow f(x)Ff(y)
$$
\end{definition}

\begin{definition}\label{def1.2}
We say that a subset of a Polish space is \emph{analytic} if it is the continuous image of a Borel set.
We say that an equivalence relation on a Polish space $X$ is Borel  (analytic) if it is Borel (analytic) as a subset of $X \times X$.

Suppose that $X, Y$ are Polish spaces, and $E, F$ are equivalence relations on $X, Y$ respectively. We say that $E$ is continuously reducible to $F$, written $E \le_{C} F$, if there exists a continuous reduction from $E$ to $F$. We say that $E$ is Borel reducible to $F$, written $E \le_{B} F$ if there exists a Borel reduction from $E$ to $F$. 

Suppose that $\Gamma$ is a complexity class. We say that an equivalence relation $E$ is of \emph{potential complexity $\Gamma$} if there exists an equivalence relation $F \in \Gamma(X \times X)$ and $E$ is Borel reducible to $F$ (see \cite[Lemma 12.5.4]{gao_invariant_2009}). We say that $E$ is \emph{$\Gamma$-definable} if $E \in \Gamma(X \times X)$.  
\end{definition}

\begin{lemma}\label{lem1.3}
(see \cite[Section 5.1]{gao_invariant_2009})

\begin{enumerate}
\item{
Suppose that $F \le_{c} E$ and $E$ is $\mathbf{\Sigma}_{\alpha}$ or $\mathbf{\Pi}_{\alpha}^{0}$. Then so is $F$.}
\item{ If $F \le_{B} E$ and $E$ is Borel, then so is $F$. }
\end{enumerate}
\end{lemma}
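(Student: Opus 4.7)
The proof plan is to observe that both parts follow from the elementary fact that if $f\colon X \to Y$ is a reduction of $F$ to $E$, then as a subset of $X \times X$ we have the identity
$$
F = (f \times f)^{-1}(E),
$$
where $f \times f\colon X \times X \to Y \times Y$ is the induced map $(x,y) \mapsto (f(x), f(y))$. Indeed, this is just a restatement of the defining condition $xFy \Leftrightarrow f(x) E f(y)$ in \cref{def1.1}.

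For part (1), I would note that if $f$ is continuous, then $f \times f$ is continuous as well (being the product of continuous maps). By the defining property of a complexity class recalled just before \cref{def1.1}, continuous preimages preserve membership in $\mathbf{\Sigma}^{0}_{\alpha}$ and $\mathbf{\Pi}^{0}_{\alpha}$. Hence if $E \in \mathbf{\Sigma}^{0}_{\alpha}(Y \times Y)$ (resp.\ $\mathbf{\Pi}^{0}_{\alpha}(Y \times Y)$), then $F = (f \times f)^{-1}(E) \in \mathbf{\Sigma}^{0}_{\alpha}(X \times X)$ (resp.\ $\mathbf{\Pi}^{0}_{\alpha}(X \times X)$).

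For part (2), the argument is analogous, replacing continuous maps with Borel ones. The product $f \times f$ of Borel maps is Borel (each coordinate projection is continuous, so $f \times f$ is Borel coordinatewise, hence Borel). Since preimages of Borel sets under Borel maps between Polish spaces are Borel, we conclude that $F = (f \times f)^{-1}(E)$ is Borel in $X \times X$ whenever $E$ is Borel in $Y \times Y$.

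There is essentially no obstacle here; the whole content of the lemma is the observation that a reduction realizes $F$ as a preimage of $E$ under $f \times f$, together with the standard closure properties of the relevant pointclasses under continuous and Borel preimages. The only mildly nontrivial point worth stating explicitly is that $f \times f$ inherits the regularity of $f$, which in the Borel case uses that a function into a product of Polish spaces is Borel iff each component is.
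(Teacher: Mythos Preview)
Your argument is correct and is exactly the standard proof: a reduction $f$ realizes $F$ as $(f\times f)^{-1}(E)$, and one then invokes closure of the relevant pointclasses under continuous (resp.\ Borel) preimages. The paper itself gives no proof of this lemma---it simply records the statement with a reference to \cite[Section 5.1]{gao_invariant_2009}---so there is nothing to compare beyond noting that your write-up is the expected textbook argument.
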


\begin{definition}\label{def1.4}
We say that an equivalence relation $E$ is \emph{idealistic} if there exists an assignment $[x] \mapsto \mathcal{F}_{[x]}$ mapping each $E$-class $[x]$ to a $\Sigma$-filter $\mathcal{F}_{[x]}$ of subsets of $X$ and a Borel function $\zeta : X \rightarrow X$ satisfying: 
\begin{enumerate}
\item{$xE \zeta(x)$ for each $x \in X$}
\item{For each Borel subset $A$ of $X \times X$ the Borel subset $A_{\mathcal{F}} \subseteq X$ defined by
$$
x \in A_{\mathcal{F}} \iff \{ x' \in [x] : (\zeta(x), x) \in A \} \in \mathcal{F}_{[x]}
$$ 
is Borel. 
}
\end{enumerate}
\end{definition}

\begin{remark}\label{xft5555}
The preceding definition, which comes from \cite[3.1]{bergfalk_definable_2024} is slightly nonstandard; see there for more details. 
\end{remark}

\begin{example}\label{exam1.5}
 A \emph{group with a Polish cover} is a quotient $G/H$, where $G$ is a Polish group and $H$ is the Borel image of a Polish group. 
 Consider the coset relation on a group with a Polish cover $\mathcal{R}(G/H)$ given by $x \mathcal{R}(G/H) x' \Leftrightarrow x H = x' H$ Then $\mathcal{R}(G/H)$ is idealistic, as witnessed by $\zeta = id$ and the assignment $[x] \mapsto \mathcal{F}_{x}$, with $D \in \mathcal{F}_{x} \Leftrightarrow x^{-1}D$ is a comeager subset of the Polish group $H$ (see \cite[Proposition 5.4.10]{gao_invariant_2009}).
 \end{example}
 
 \begin{example}\label{exam1.6} 
 Given two topological spaces $X, Y$ we write $C(X, Y)$ for the continuous functions from $X$ to $Y$ equipped with the compact-open topology. 
 Given two maps $f, g : X \rightarrow Y$, we say $f, g$ are homotopic if there exists : $h : X \times [0, 1] \rightarrow Y$ with $h|_{X \times \{ 0\}} = f, h|_{X \times \{ 1\}} = g$. This is an equivalence relation on  $C(X, Y)$.
 
 Given a Polish space $X$ and a locally finite countable simplicial complex $K$, $C(X, |K|)$ is a Polish space. Moreover, homotopy is an idealistic equivalence relation on  $C(X, |K|)$  by \cite[Theorem 4.15]{bergfalk_definable_2024}.
 
 \end{example}

\begin{definition}\label{def1.7}
A definable set is a pair $(X, E)$ such that $X$ is a Polish space and $E$ is an idealistic and Borel equivalence relation. A \emph{definable function} is a function $X/E \rightarrow Y/E$ that lifts to a Borel function. We let $\mathbf{DSet}$ be the category whose objects are definable sets and whose morphisms are definable functions. Composition is given by composition of functions and the identity is given by the identity function $X/E \rightarrow X/E$. 
\end{definition}

\begin{proposition}[{\cite[Proposition 3.9]{bergfalk_definable_2024}}]\label{prop1.8}

Suppose that $X/E$ and $X/F$ are definable sets and $f : X/E \rightarrow X/F$ is a definable function. Then the following assertions are equivalence
\begin{enumerate}
\item{$f$ is a bijection}
\item{$f$ is an isomorphism in $\mathbf{DSet}$}
\end{enumerate} 
\end{proposition}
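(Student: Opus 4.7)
The implication (2) $\Rightarrow$ (1) is automatic: morphisms in $\mathbf{DSet}$ are genuine functions between the quotients, so any categorical isomorphism is in particular a bijection of underlying sets. All the work is in (1) $\Rightarrow$ (2), where I must show that the set-theoretic inverse $f^{-1}$ lifts to a Borel function.

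Fix a Borel lift $\tilde{f} : X \to Y$ of $f$ and form the Borel relation
$$R = \{(y, x) \in Y \times X : \tilde{f}(x) \mathrel{F} y\}.$$
Surjectivity of $f$ forces every vertical section $R^{y}$ to be nonempty, while injectivity of $f$ (combined with the fact that $\tilde{f}$ reduces $E$ to $F$) forces each $R^{y}$ to be a single $E$-class, namely the one mapped to $[y]$ by $f$. By the Jankov--von Neumann uniformization theorem there is a $\sigma(\mathbf{\Sigma}^{1}_{1})$-measurable (in particular universally measurable) uniformization $h : Y \to X$ of $R$. This $h$ is already a set-theoretic lift of $f^{-1}$; what remains is to promote $h$ to a genuinely Borel function $\tilde{g}$ with $h(y) \mathrel{E} \tilde{g}(y)$ for all $y$, since any such $\tilde{g}$ also lifts $f^{-1}$.

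The Borelization of $h$ is the one step where the idealistic hypothesis on $E$ is indispensable, and it is where I expect the real difficulty to lie. Using the Borel function $\zeta_{E} : X \to X$ and the family of filters $\{\mathcal{F}_{[x]}\}_{[x] \in X/E}$ witnessing that $E$ is idealistic, the plan is to select, for each $y$, a filter-generic element of the $E$-class $[h(y)]$. Concretely, fix a countable Borel family $\{u_n : X \to \mathbb{R}\}$ separating points; for each $n$ and each rational $q$ the set
$$A_{n,q} = \{ y \in Y : \{ x' \in [h(y)] : u_{n}(x') < q \} \in \mathcal{F}_{[h(y)]} \}$$
is Borel by the defining axiom of an idealistic equivalence relation. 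Taking appropriate infima over $q$ yields a Borel assignment $y \mapsto u_{n}(\tilde{g}(y))$ for each $n$, and, since the $u_{n}$ separate points, these values pin down a single Borel function $\tilde{g} : Y \to X$ lying in $[h(y)]_{E}$ for every $y$. (This is essentially the mechanism used to verify idealisticness of coset relations in \cite[Proposition 5.4.10]{gao_invariant_2009}, run in reverse.)

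Once $\tilde{g}$ is constructed, it is Borel by design, and by construction $\tilde{f}(\tilde{g}(y)) \mathrel{F} y$ for every $y$, so $\tilde{g}$ descends to $f^{-1}$ on the quotients. Hence $f^{-1}$ is a definable function, and since $f \circ f^{-1}$ and $f^{-1} \circ f$ are represented on the nose by the identity Borel lifts, they are the identity in $\mathbf{DSet}$, proving that $f$ is an isomorphism in $\mathbf{DSet}$.
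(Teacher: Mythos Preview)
The paper does not supply a proof of this proposition; it is quoted from \cite[Proposition 3.9]{bergfalk_definable_2024} without argument, so there is no in-paper proof to compare against and your proposal must stand on its own.

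Your overall architecture is right: $(2)\Rightarrow(1)$ is trivial, and for $(1)\Rightarrow(2)$ you correctly reduce to producing a Borel uniformization of the Borel relation $R=\{(y,x):\tilde f(x)\mathrel{F}y\}$, whose vertical sections are single $E$-classes. You are also right that the idealistic hypothesis on $E$ is what must drive this, and your claim that each $A_{n,q}$ is Borel is correct---though the justification needs one extra line: the idealistic axiom is stated over $X$, not $Y$, and gives only an $E$-invariant Borel set $C\subseteq X$; one then observes $A_{n,q}=\{y:R^y\subseteq C\}=\{y:R^y\cap C\neq\varnothing\}$, which is simultaneously $\Pi^1_1$ and $\Sigma^1_1$ and hence Borel by Souslin's theorem.

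The gap is the final move, where you assert that taking infima over $q$ ``pin[s] down a single Borel function $\tilde g$ lying in $[h(y)]_E$.'' A $\sigma$-filter on an $E$-class need not concentrate at any point: if $\mathcal F_{[x]}$ is, say, the co-countable filter on an uncountable class and $u_n$ is unbounded there, then $\{x':u_n(x')<q\}$ never lies in the filter and your infimum is $+\infty$; even when all the infima are finite, nothing forces them to be simultaneously realized by one element of the class. The reference to \cite[Proposition 5.4.10]{gao_invariant_2009} ``run in reverse'' does not rescue this, since that argument manufactures a filter from a group action rather than a point from a filter. What you have actually established---that $\{y:B\cap R^y\in\mathcal F_{R^y}\}$ is Borel for every Borel $B\subseteq X$---is exactly the \emph{input} to a large-section uniformization argument; the missing piece is a genuine selection theorem turning that input into a Borel selector, and your coordinatewise reconstruction does not supply one.
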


The following is immediate from the definition of potential complexity above. 

\begin{corollary}\label{cor1.9}
Suppose that $f : X/E\rightarrow Y/F$ is a definable bijection and $\Gamma$ is a complexity class. Then $E$ has potential complexity $\Gamma$ iff $F$ does. 
\end{corollary}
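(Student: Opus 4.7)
The plan is to exploit the fact that a definable bijection in $\mathbf{DSet}$ is, by \cref{prop1.8}, an isomorphism, so there is a two-sided definable inverse. I would then show that any choice of Borel lifts of the two definable maps witnesses Borel bireducibility between $E$ and $F$, after which the conclusion follows purely from the transitivity of $\leq_{B}$ together with the definition of potential complexity.

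In more detail, by \cref{prop1.8} there is a definable function $g : Y/F \to X/E$ with $g \circ f = \mathrm{id}$ and $f \circ g = \mathrm{id}$. By \cref{def1.7}, $f$ and $g$ lift to Borel maps $\tilde{f} : X \to Y$ and $\tilde{g} : Y \to X$, meaning that $[\tilde{f}(x)]_{F} = f([x]_{E})$ and $[\tilde{g}(y)]_{E} = g([y]_{F})$ for all $x, y$. I would then verify that $\tilde{f}$ is a Borel reduction of $E$ to $F$: using injectivity of $f$,
\[
x E x' \iff [x]_{E} = [x']_{E} \iff f([x]_{E}) = f([x']_{E}) \iff [\tilde{f}(x)]_{F} = [\tilde{f}(x')]_{F} \iff \tilde{f}(x) F \tilde{f}(x').
\]
The same argument with $g$ shows that $\tilde{g}$ is a Borel reduction of $F$ to $E$. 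Hence $E \leq_{B} F$ and $F \leq_{B} E$.

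For the final step, suppose $E$ has potential complexity $\Gamma$, so there exists an equivalence relation $F' \in \Gamma(Z \times Z)$ on some Polish space $Z$ with $E \leq_{B} F'$. Composing the Borel reductions, $F \leq_{B} E \leq_{B} F'$ shows that $F$ also has potential complexity $\Gamma$. The reverse implication is symmetric. There is no real obstacle to this argument; the only point that requires care is invoking \cref{prop1.8} to produce the definable inverse of $f$, since without it one would only obtain a Borel reduction in one direction.
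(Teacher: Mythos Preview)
Your argument is correct and is exactly the natural unpacking of what the paper has in mind: the paper gives no proof at all, simply stating that the corollary ``is immediate from the definition of potential complexity above,'' placed directly after \cref{prop1.8}. Your observation that \cref{prop1.8} is the key ingredient (to obtain a Borel lift of the inverse and hence Borel bireducibility) is precisely the content the paper is leaving implicit.
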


We now define the notion of Solecki groups for a group with a Polish cover $G/H$ which are the smallest $\mathbf{\Pi^{0}_{1+\alpha+1}}$ subgroups of $G$ containing $H$. 
\begin{definition}\label{def1.10}
The \emph{Solecki groups} of a group with a Polish cover $G/H$ can be defined recursively for each ordinal $\alpha < \omega_1$ by the following formula:
 \begin{enumerate}
\item{$s_{0}^{H}(G) = \bar{H}^{G}$, the closure of $H$ in $G$.}
\item{$s_{\alpha+1}^{H}(G) = s_{1}(s_{\alpha}^{H}(G))$ for each successor ordinal $\alpha + 1$}
\item{$s_{\lambda}^{H}(G) = \cap_{\lambda' < \lambda} s_{\lambda'}^{H}(G)$ for $\lambda$ a limit ordinal}
\end{enumerate}
\end{definition}

It was proven in \cite[Theorem 2.1]{solecki_analytic_1999} that for each group with a Polish cover there is a smallest ordinal $\alpha$ such that $s_{\alpha}^{H}(G) = H$. This $\alpha$ is called the \emph{Solecki rank} of $G/H$.  In \cite[Theorem 6.1]{lupini_complexity_2022}, the Solecki rank was related to the potential complexity of the coset relation (\cref{exam1.5}).
In \cite{casarosa_bootstrap_2025}, \cite[Theorem 6.1]{lupini_complexity_2022} was further applied to study the potential complexity of $\mathrm{Ext}^{1}(A, B)$ for two countable abelian groups $A, B$, which is described as group with Polish cover in the next section.

\section{Preliminaries on Definable Algebraic Topology}\label{sec2}

In this section, we briefly review some of the constructions of \cite{bergfalk_definable_2024} and \cite{bergfalk_definable_2020}, which describe classical algebraic topology invariants as groups with Polish cover. In particular, we will explain how \v{C}ech cohomology and the $\mathrm{Ext}^{1}$ functor can be viewed as such. We will also review the definable analogues of various classical theorems in algebraic topology, such as the homotopy extension theorem. 
\\

Suppose that $A, B$ are groups. Then an extension $G$ of $B$ by $A$ is a short exact sequence 
$$
0 \rightarrow A \rightarrow G \rightarrow B \rightarrow 0
$$
Two extensions are equivalent if there is a commutative diagram
$$
\xymatrix
{
0 \ar[r] & A  \ar@{=}[d]  \ar[r] & G \ar[r] \ar[d]_{\cong} & B  \ar[r] \ar@{=}[d] & 0 \\
0 \ar[r] & A \ar[r] & G' \ar[r] & \ar[r] B & 0
}
$$
By the five lemma the middle map is necessarily an isomorphism. $\mathrm{Ext}^{1}(A, B)$ has a group structure given by the \emph{Baer sum} (see \cite[Section 3.4]{weibel_introduction_1995}).

\begin{construction}\label{con2.1}
Let $A, B$ be countable abelian groups. One may describe an extension $E$ of $B$ by $A$ by using $A$ and $B$ alone. Indeed, if $s : B \rightarrow E$ is a section of $E \rightarrow B$ the multiplication table is entirely determined by a unique function $c_{s} : B \times B \rightarrow A$ which satisfies
$$
s(x) + s(y) = s(x + y) + c_{s}(x, y)
$$
If $t : B \rightarrow E $ is another section of the same epimorphism, then the function $h : B \rightarrow A$ with  $h(x) = s(x) - t(x)$ is a witness to the fact that $s, t$ represent the same extension. A function $h : B \rightarrow A$ is a witness to the fact that $s, t$ represent the same extension in the sense that $h(x) = s(x) - t(x)$.

\end{construction}

This leads to us being able to define $\mathrm{Ext}^{1}$ as a group with a Polish cover. 

\begin{construction}\label{con2.2}
Suppose that $A, B$ are countable abelian groups. Suppose $A$ is equipped with the discrete topology. Then $A^{B \times B}$ equipped with the product topology is a Polish group.
Let $Z(A, B)$ denote the closed subgroup of $A^{B \times B}$ consisting of \emph{cocycles of B with coefficients in $A$}, which are functions $B \times B \rightarrow A$ satisfying the following relations for all $x, y, z \in B$
\begin{enumerate}
\item{$c(x, 0) = c(0, x)$}
\item{$c(x, y) + c(x+y, z) = c(x, y +z) + c(y, z)$}
\item{$c(x, y) = c(y, x)$}
\end{enumerate}

Let $C(A, B) = A^{B}$, where $A$ is endowed with the discrete topology and $A^{B}$ the product topology. 

Consider the function $\delta : C(A, B) \rightarrow Z(A, B)$ given by
$$
\delta(h)(x, y) = h(x) - h(y) + h(x, y)
$$
The $Z(A, B)/C(A, B) \cong \mathrm{Ext}^{1}(A, B)$, so that $\mathrm{Ext}^{1}(A, B)$ has the structure of a group with Polish cover. 
\end{construction}

\begin{construction}\label{con2.3}
Let $K$ be a countable simplicial complex and fix an abelian Polish group $G$ and consider, for every $n \in \mathbb{N}$, the collection
$\mathcal{C}^{n}(K; G)$
of all maps from the countable set $K^{(n)}$ to $G$. Endowed with the group operation of pointwise addition and the product topology of countably many copies of $G$, the collection $\mathcal{C}^{n}(K; G)$ forms the abelian Polish group of all $G$-valued cochains of $K$.

For each $n \ge 0$, there is a Polish cochain map
$$
\delta : \mathcal{C}^{n}(K; G) \rightarrow \mathcal{C}^{n+1}(K; G)
$$
which is given by the formula
$$
\delta(\zeta)(v_{0} \cdots, v_{n+1}) =  \Sigma_{i=0}^{n} (-1)^{i} \zeta(v_{0}, \cdots, \hat{v_{i}}, \cdots, v_{n+1})
$$
where $\hat{v_{i}}$ means omission of the ith vertex. This gives rise to the Polish cochain complex
$$
\cdots \xrightarrow{\delta}  \mathcal{C}^{n}(K; G) \xrightarrow{\delta} \mathcal{C}^{n+1}(K; G) \xrightarrow{\delta} \cdots
$$
which gives the simplicial cohomology $H^{n}(X; G), n \ge 0$ the structure of a group with Polish cover.

Given a subcomplex $L $ of $K$, we have the relative cochains $ \mathcal{C}^{n}(K, L; G) \subseteq  \mathcal{C}^{n}(K; G)   $
which are exactly the cochains which are zero when restricted to $L$. The restriction of the coboundary gives rise to the relative Polish cochan complex
$$
\mathcal{C}^{n}(K, L; G) \xrightarrow{\delta} \mathcal{C}^{n+1}(K, L; G)  \xrightarrow{\delta} \cdots 
$$
which shows that relative simplicial cohomology $H^{n}(X, L; G)$ has the structure of a group with Polish cover. 
\end{construction}

\begin{construction}\label{con2.4}
Let $n \in \mathbb{N}$, and let $\mathbb{N}^{n}$ be the space of all functions from $n$ to $\mathbb{N}$. We will write $\mathbb{N}^{< \mathbb{N}} = \bigcup_{n \in \mathbb{N}} \mathbb{N}^{n}$.
We will write $\mathcal{N} = \mathbb{N}^{\mathbb{N}}$ viewed as a Polish space with the product topology on discrete copies of $\mathbb{N}$. For each  $\alpha \in \mathbb{N}$, we will write $\alpha|n = (\alpha(0), \alpha(1), \cdots, \alpha(n-1))$. For each $\alpha \in \mathbb{N}^{< \mathbb{N}} $, we will define $\mathcal{N}_{s} := \{ \alpha \in \mathcal{N} : \alpha|n = s  \}$. Note that $\{ \mathcal{N}_{s}: s \in  \mathbb{N}^{< \mathbb{N}}  \}$ forms a basis of $\mathcal{N}$.
 
We will write $\mathcal{N}^{*}$ for the closed subspace of $\mathcal{N}$ consisting of non-decreasing functions, regarded as a Polish space. We think of this as endowed with the lexicographic order. We will write $\mathcal{N}^{*}_{s} := \mathcal{N}_{s} \cap \mathcal{N}^{*}$. We will write $\alpha \vee \beta := (max(\alpha(0), \beta(0)), max(\alpha(1), \beta(1)), \cdots)$ and $\alpha \wedge \beta := (min(\alpha(0), \beta(0)), min(\alpha(1), \beta(1)), \cdots)$
\end{construction}

\begin{definition}\label{def2.5}
We say that a collection of compact subsets
$$
X_{1} \subseteq X_{2} \cdots
$$
 of $X$ is an \emph{exhaustion} of $X$ if $\cup X_{i} = X$. 

A \emph{covering system} on a locally compact Polish space $X$ is a triple $\mathfrak{U} = (X_{n}, \mathcal{U}^{X}_{\alpha}, r^{\beta}_{\alpha})$ such that:
\begin{enumerate}
\item{$\{ X_{n}, n \in \mathbb{N} \}$ is an exhaustion of $X$.}
\item{$\{ \mathcal{U}_{\alpha}^{X}, \alpha \in \mathcal{N}^{*}\}$ is a collection of locally finite covers of $X$, cofinal in the collection of all coverings of $X$.}
\item{refinement maps $r^{\beta}_{\alpha} :  \mathcal{U}_{\beta}^{X} \rightarrow  \mathcal{U}_{\alpha}^{X} $ for each $\alpha \le \beta$, which satisfy relations $r^{\gamma}_{\beta} \circ r^{\beta}_{\alpha} = r^{\gamma}_{\alpha}, r^{\beta}_{\beta} = id$. }
\end{enumerate}
and which satisfy the axioms of \cite[Definition 2.3]{bergfalk_definable_2024}. 
\end{definition}

\begin{definition}\label{def2.6}

Suppose that $X$ is a space, $G$ is an abelian group and $n \in \mathbb{N}$. Then the \emph{\v{C}ech cohomology} of $X$ with coefficients in $G$ is defined to be:
$$
\check{H}^{n}(X; G) = \mathrm{colim}_{\mathcal{U} \in \mathrm{Cov}(X)} H^{n}(N(\mathcal{U}); G) = \mathrm{colim}_{\mathcal{U} \in \mathrm{Cov}(X)} H^{n}(|N(\mathcal{U})|; G)
$$
where $H^{n}$ in the second and third expressions are, respectively, simplicial and cellular cohomology. 
\end{definition}

\begin{construction}\label{con2.6}

Suppose that $X$ is a locally compact Polish space with covering system $\mathfrak{U}$ and $G$ is a Polish group.
We are going to show how the \v{C}ech cohomology of $X$ has the structure of a group with a Polish cover. 
 We write
 $$\mathcal{C}^{n}_{sem}(\mathfrak{U}; G) := \cup_{\alpha \in \mathcal{N}^{*}} \mathcal{C}^{n}(N(\mathcal{U}_{\alpha}^{X}); G). $$

In a slight abuse of notation, we will often write $r^{\beta}_{\alpha}$, for $N(r^{\beta}_{\alpha})$. For each $k \in \mathbb{N}$, let us write $\mathcal{U}_{\alpha|k}^{X}$ for the covering $ \mathcal{U}_{\alpha}^{X}\upharpoonright X_{k} $.

Given two objects $\eta : N(\mathcal{U}_{\alpha})^{(n)} \rightarrow G, \mu : N(\mathcal{U}_{\beta})^{(n)} \rightarrow G,$ we write $\eta \sim \mu$ if  we have
$
\eta \circ r^{\alpha \vee \beta}_{\alpha} = \mu \circ r^{\alpha \vee \beta}_{\beta} 
$. By \cite[Lemma 2.9]{bergfalk_definable_2024}, this is an equivalence relation.
We define $$\mathcal{C}^{n}(\mathfrak{U}; G)) := \mathcal{C}^{n}_{sem}(\mathfrak{U}; G)/ \sim.$$

For two  $\eta : N(\mathcal{U}_{\alpha}^{X})^{(n)} \rightarrow G, \mu : N(\mathcal{U}_{\beta}^{X})^{(n)} \rightarrow G$, we let $\delta_{k} = 0$ if 
$$\eta \circ r^{\alpha \vee \beta}_{\alpha}|_{N(\mathcal{U}_{\alpha|k}^{X})^{(n)}} = \mu \circ r^{\alpha \vee \beta}_{\beta}|_{N(\mathcal{U}_{\beta|k}^{X})^{(n)} } $$
and put $\delta_{k} = 1$ otherwise. We define 
\begin{equation}\label{eqmetric}
\rho([\eta], [\mu]) = \sum_{i=1}^{\infty} \frac{\delta^{k}}{2^{k}} 
\end{equation}

The function $\rho$ endows $\mathcal{C}^{n}(\mathfrak{U}; G) $ with a metric, hence the structure of a Polish space (by the discussion of \cite[Proposition 2.10]{bergfalk_definable_2024}), and we call this the \emph{space of n-cochains}. The usual coboundary operator for \v{C}ech cohomology induces a continuous map 
$$
\cdots
\mathcal{C}^{n}(\mathfrak{U}; G) \xrightarrow{\delta^{n-1}} \mathcal{C}^{n+1}(\mathfrak{U}; G) \xrightarrow{\delta^{n}} \cdots
$$ 
and we define $\mathcal{Z}^{n}(\mathfrak{U}; G) := \mathrm{ker}(\delta^{n}), \mathcal{D}^{n}(\mathfrak{U}; G) := \mathrm{im}(\delta^{n-1})$, which we call the \emph{spaces of cochains and coboundaries}. 

We have an isomorphism 
$$
\check{H}^{n}(X, G) \cong \mathcal{Z}^{n}(\mathfrak{U}; G) / \mathcal{D}^{n}(\mathfrak{U}; G)
$$
so that the usual \v{C}ech cohomology can be viewed as a group with Polish cover. 
\\

Suppose that Y is a closed subspace of $X$ so that there is an induced covering system $\mathfrak{V} = (\mathcal{U}_{\alpha}^{Y}, Y_{n}, r_{\alpha}^{\beta})$ on $Y$.
Analogously to the preceding definitions, we have Polish groups of relative cochains, cocycles and coboundaries $\mathcal{Z}^{n}(\mathfrak{U}, \mathfrak{V}; G) \subseteq \mathcal{Z}^{n}(\mathfrak{U}; G), \mathcal{C}^{n}(\mathfrak{U}, \mathfrak{V}; G) \subseteq \mathcal{C}^{n}(\mathfrak{U}; G), \mathcal{D}^{n}(\mathfrak{U}, \mathfrak{V}; G) \subseteq \mathcal{D}^{n}(\mathfrak{U}; G) $ as well as an isomorphism of groups:
$$
\check{H}^{n}(X, Y; G) \cong \mathcal{Z}^{n}(\mathfrak{U}, \mathfrak{V}; G ) / \mathcal{D}^{n}(\mathfrak{U}, \mathfrak{V}; G)
$$
so that the relative cohomology object can be viewed as a group with a Polish cover. 
\end{construction}

We will now mention several important theorems about definable algebraic topology obtained in \cite{bergfalk_definable_2024}:
\\

The following is a very useful result, which is used in the proof of \cref{thm5.1} below:

\begin{theorem}[{\cite[Theorem 6.4]{bergfalk_definable_2024}}]\label{thm2.6}
 Suppose that $A$ is a closed subspace of a locally compact Polish space $X$ and that $K$ is a countable, locally finite simplicial complex. Then every map $g:(A \times [0, 1])\cup (X\times \{0 \})\rightarrow |K|$ has an extension $X \times [0, 1]$. Furthermore, this extension can be chosen in a Borel fashion from g, in the sense that there exists a choice function
 $$
 C((A \times [0,1])\cup (X\times \{0 \}), |K|) \rightarrow  C(X \times [0,1], |K|)
 $$
 witnessing this assertion which is Borel.
\end{theorem}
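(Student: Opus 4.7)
The plan is to combine the classical homotopy extension theorem for absolute neighborhood retracts (ANRs) with an explicit construction that is Borel — in fact continuous — in the input map $g$. Since $K$ is a locally finite countable simplicial complex, its geometric realization $|K|$, sitting inside $\mathbb{R}^{\mathbb{N}}$ as in \cref{con2.3}, is an ANR for the class of metrizable spaces. Hence there is an open neighborhood $U$ of $|K|$ in $\mathbb{R}^{\mathbb{N}}$ together with a continuous retraction $r \colon U \to |K|$; both $U$ and $r$ depend only on $K$, not on $g$.

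Let $B = (A \times [0,1]) \cup (X \times \{0\})$, which is closed in the locally compact Polish space $X \times [0,1]$. Given $g \colon B \to |K|$, I would first apply Dugundji's extension theorem coordinate-by-coordinate (using the embedding $|K| \subseteq \mathbb{R}^{\mathbb{N}}$) to produce a continuous extension $\tilde g \colon X \times [0,1] \to \mathbb{R}^{\mathbb{N}}$. Crucially, Dugundji's formula is a \emph{linear} operator $g \mapsto \tilde g$, and $\tilde g(x,t)$ lies in the convex hull of values of $g$ taken at points of $B$ close to $(x,t)$. Consequently, on a sufficiently small open tube $V$ around $B$ the extension $\tilde g$ remains inside $U$. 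Choosing a continuous $\phi \colon X \times [0,1] \to [0,1]$ with $\phi \equiv 1$ on $B$ and $\mathrm{supp}(\phi) \subseteq V$, together with a basepoint $y_0 \in |K| \cap U$, one sets
$$
h(x,t) \;=\; r\bigl(\phi(x,t)\,\tilde g(x,t) + (1-\phi(x,t))\,y_0\bigr),
$$
after shrinking $V$ if necessary so that the argument of $r$ lies in $U$. Then $h$ is a continuous extension of $g$ to $X \times [0,1]$ valued in $|K|$.

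For the Borel dependence, every step of this construction is manifestly continuous in $g$ with respect to the compact-open topologies: the Dugundji operator $g \mapsto \tilde g$ is continuous because it is defined by a fixed formula in terms of partitions of unity depending only on the pair $(X \times [0,1], B)$; the cutoff $\phi$, tube $V$, basepoint $y_0$ and retraction $r$ are all chosen once and for all, independently of $g$. Hence the composite assignment $g \mapsto h$ is a continuous — and therefore Borel — selection from $C(B, |K|)$ to $C(X \times [0,1], |K|)$.

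The main obstacle is arranging that the modified Dugundji extension actually lands in $U$ uniformly in $g$, because $|K|$ is only locally (not globally) compact and the "safe thickness" of $U$ around a point of $|K|$ may degenerate as one moves to infinity in $|K|$. A careful choice of the tube $V$, exploiting the locally finite simplicial structure of $K$ to cover $|K|$ by a countable family of compact ANR neighborhoods with uniformly controlled retractions, circumvents this issue. As a cleaner fallback, one can first establish bare existence of an extension via the classical ANR homotopy extension theorem and then apply a Kuratowski–Ryll-Nardzewski selection to the closed-valued multifunction $g \mapsto \{\tilde g \in C(X \times [0,1], |K|) : \tilde g|_B = g\}$; the explicit Dugundji construction above is preferable, however, because it yields an outright continuous — not merely Borel — choice function.
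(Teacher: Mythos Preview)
The paper does not supply its own proof of this theorem: it is quoted verbatim from \cite[Theorem~6.4]{bergfalk_definable_2024} and used as a black box, so there is nothing in the present paper to compare your argument against.

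Evaluating your sketch on its own terms, there is a genuine gap precisely at the point you yourself label ``the main obstacle''.  Your construction needs a single open tube $V \supseteq B$ and a cutoff $\phi$ supported in $V$, chosen once and for all, such that $\phi(x,t)\,\tilde g(x,t) + (1-\phi(x,t))\,y_0 \in U$ for \emph{every} input $g$.  No such $V$ exists: because $|K|$ is noncompact, the distance from $|K|$ to $\mathbb{R}^{\mathbb{N}}\setminus U$ is not bounded away from zero, and for any fixed $V$ one can cook up a $g$ (taking values far out in $|K|$) whose Dugundji extension $\tilde g$ already exits $U$ on $V\setminus B$.  If instead you let $V$ and $\phi$ depend on $g$, you lose the claimed continuity of $g\mapsto h$, and you have given no argument that this dependence is even Borel.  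The sentence about ``uniformly controlled retractions'' does not resolve this; making it precise is essentially the entire content of the theorem.

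Your fallback via the Kuratowski--Ryll-Nardzewski selection theorem is closer to a workable route, but it is also incomplete as stated.  You would need to check that the set-valued map
\[
g \;\longmapsto\; \bigl\{\,h \in C(X\times[0,1],|K|) : h|_{B} = g \,\bigr\}
\]
is Borel-measurable in the required sense (preimages of open sets are Borel), which amounts to controlling the restriction map $C(X\times[0,1],|K|)\to C(B,|K|)$; you assert none of this.  In short, the proposal identifies the right ingredients but does not assemble them into a proof.
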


Here we recap the definable Huber isomorphism, which can be used to provide examples of potential complexities for the homotopy relation using arguments similar to those of \cref{sec7}.

\begin{theorem}[{\cite[Theorem 5.7]{bergfalk_definable_2024}}]\label{thm2.7} 
Let $G$ be an countable abelian group and $K(G, n)$ denote the polyhedral Eilenberg space of type $(G, n)$ from \cite{bergfalk_definable_2024}. Then there is a definable bijection $\check{H}^{n}(X; G) \cong [X, K(G, n)]$. 
\end{theorem}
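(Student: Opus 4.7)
The plan is to establish the bijection by adapting the classical skeletal obstruction-theoretic procedure to the shape-theoretic presentation of $X$ afforded by its covering system, and then verifying that every step can be carried out in a Borel fashion. Fix a covering system $\mathfrak{U} = (X_n, \mathcal{U}_\alpha^X, r^\beta_\alpha)$ on $X$; since $X$ has finite covering dimension $d$, we may cofinally restrict attention to covers whose nerves have dimension $\leq d$, so that on each nerve $N(\mathcal{U}_\alpha)$ classical obstruction theory terminates after finitely many steps.

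First I would construct a Borel map $\Phi_0 : C(X,|K|) \to \mathcal{Z}^{n}(\mathfrak{U}; \pi_n|K|)$ as follows. Using partitions of unity subordinate to $\mathcal{U}_\alpha$, one has the canonical shape map $p_\alpha : X \to |N(\mathcal{U}_\alpha)|$. Given $f : X \to |K|$, one approximates $f$ by a compatible system of cellular maps $f_\alpha : |N(\mathcal{U}_\alpha)| \to |K|$ such that $f \simeq f_\alpha \circ p_\alpha$, which is possible because $K$ is $(n-1)$-connected and each $|N(\mathcal{U}_\alpha)|$ is finite-dimensional. Classical obstruction theory (as in \cite[Chapter 18]{fomenko_homotopical_2016}) assigns to each such $f_\alpha$ its characteristic cocycle $c(f_\alpha) \in \mathcal{Z}^{n}(N(\mathcal{U}_\alpha); \pi_n|K|)$, and compatibility under refinement is automatic from cellular naturality. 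The Borel homotopy extension theorem (\cref{thm2.6}) ensures that the approximation step is Borel in $f$, so $\Phi_0$ is Borel, and passing to cohomology gives a Borel map $C(X,|K|) \to \check{H}^{n}(X; \pi_n|K|)$.

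Next I would show that $\Phi_0$ descends to a bijection $\Phi : [X,|K|] \to \check{H}^{n}(X; \pi_n|K|)$. For well-definedness, a homotopy $H : X \times [0,1] \to |K|$ between $f$ and $g$ can be approximated, via \cref{thm2.6} applied to pairs $(A, X)$ with $A = X \times \{0,1\}$, by compatible cellular homotopies on nerves, whose characteristic cochains provide a coboundary witnessing $c(f_\alpha) \sim c(g_\alpha)$. For surjectivity, given a class represented by $z \in \mathcal{Z}^n(N(\mathcal{U}_\alpha); \pi_n|K|)$, classical obstruction theory on the finite-dimensional polyhedron $|N(\mathcal{U}_\alpha)|$ produces a map realizing $z$ as its characteristic cocycle, and composition with $p_\alpha$ yields the required $f : X \to |K|$; here one uses that the cohomological hypothesis $\check{H}^{m+1}(X; \pi_m|K|) = 0$ for $m \geq n+1$, together with cofinality of the $\mathcal{U}_\alpha$, lets one refine $\alpha$ so that the higher obstructions to extending the skeleta all vanish. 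For injectivity, given $f, g$ with cohomologous cocycles one builds a homotopy skeleton-by-skeleton on a sufficiently fine $|N(\mathcal{U}_\alpha)|$: the vanishing of $\check{H}^{m+1}(X; \pi_m|K|)$ guarantees that each successive extension is possible (after refinement), while the vanishing of $\check{H}^{m}(X; \pi_m|K|)$ eliminates the indeterminacy at each stage. The Borel character of each step — choosing cellular approximations, extracting characteristic cocycles, extending through skeleta when obstructions vanish — follows from \cref{thm2.6} and the explicit Polish structure on $\mathcal{C}^{n}(\mathfrak{U}; \pi_n|K|)$ from \cref{con2.6}. Invoking \cref{prop1.8} then promotes the resulting definable bijection into an isomorphism of $\mathbf{DSet}$.

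The main obstacle will be keeping the constructions genuinely Borel throughout the skeletal induction. The crux is that classical obstruction theory makes repeated non-canonical choices (of cellular approximation, of a coboundary when the obstruction vanishes, of a null-homotopy of an attaching map once its class is trivial), and each such choice must be made as a Borel function of the input data simultaneously for all $\alpha$ in a cofinal subsystem. I expect this to reduce, via \cref{thm2.6} and the standard Kuratowski–Ryll-Nardzewski selection machinery, to Borel selection problems on the Polish cochain spaces $\mathcal{C}^{n}(N(\mathcal{U}_\alpha); \pi_n|K|)$, but checking that the inductive construction respects the metric \eqref{eqmetric} defining the topology of $\mathcal{C}^{n}(\mathfrak{U}; \pi_n|K|)$ will require some care in the shape-theoretic setup of \cref{sec2}.
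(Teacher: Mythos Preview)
The paper does not give its own proof of \cref{thm2.7}; it is simply quoted from \cite{bergfalk_definable_2024} as background for the later obstruction-theoretic generalization. What you have written is essentially the strategy the paper uses to prove \cref{thm5.1} (Theorem~A), specialized to an Eilenberg--Mac\,Lane target, so in spirit you are reconstructing the paper's main argument rather than the cited result.

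There is, however, a genuine gap. You assume at the outset that $X$ has finite covering dimension, and you invoke the vanishing hypotheses $\check{H}^{m+1}(X;\pi_m|K|)=\check{H}^{m}(X;\pi_m|K|)=0$ for $m\ge n+1$. Neither of these is a hypothesis of \cref{thm2.7}: the definable Huber isomorphism is stated for an arbitrary locally compact Polish space $X$. The cohomological vanishing is harmless here, since for $K=K(G,n)$ one has $\pi_m|K|=0$ for $m\neq n$ and the conditions hold trivially; but you should say so rather than treat them as standing hypotheses. The finite covering dimension is more serious: your argument uses it to make the skeletal induction on each nerve terminate, and as written the proof does not go through without it. For an Eilenberg--Mac\,Lane target this assumption is in fact dispensable, because once a map is defined on the $(n{+}1)$-skeleton all higher extension and difference obstructions live in groups with zero coefficients and hence vanish, so the induction is trivial past that stage regardless of the dimension of $N(\mathcal{U}_\alpha)$. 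You need to make that observation explicit; otherwise you have proved only the special case covered by \cref{thm5.1}, which is strictly weaker than \cref{thm2.7}.
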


\section{Obstruction Theory for CW Complexes}\label{sec3}

Suppose that $(X, L)$ is a finite-dimensional CW pair and $Y$ is a CW complex that is connected and topologically simple, which is to say the action of the fundamental groups on all homotopy groups is trivial. For each $n \in \mathbb{N}$, we will write $\bar{X}^{n} = L \cup_{L^{n}} X^{n}$. 
The purpose of this section is to review the approach to obstruction theory from \cite[Chapter 18]{fomenko_homotopical_2016}, which provides a series of cohomological obstructions to finding successive (up to homotopy) extensions as in the below diagram:
$$
\xymatrix
{ 
 & \\
\bar{X}^{n+2} \ar@{.>}[ddr] \ar@{.}[u]& \\
\bar{X}^{n+1} \ar@{.>}[dr] \ar[u] & \\
\bar{X}^{n}   \ar[u] \ar[r] & Y 
}
$$
 Using obstruction theory, we will prove the following theorem, which is an analogue of \cite[Theorem VI.16.5]{hu_homotopy_1959}:
 suppose that $n \ge 2$ is an integer such that:
 \begin{enumerate}
 \item{$Y$ is $n-1$-connected}
 \item{$H^{m}(X, L; \pi_{m}(Y)) = H^{m+1}(X, L; \pi_{m}(Y)) = 0$ for $m \ge n+1$} 
 \end{enumerate}
 Then there is a bijection
 $$
 H^{n}(X, L; \pi_{n}Y) \simeq [(X, L), (Y, *)].
 $$
 We will use the same general strategy as \cite{hu_homotopy_1959}, but using the more modern language found in \cite{fomenko_homotopical_2016}. 
\\

\begin{construction}\label{con3.1}
Suppose that we choose a map $f: \bar{X}^n \rightarrow Y$. For each $n+1$ cell $\sigma \in Y$,  the boundary $\partial \sigma$ is a topological $n$-sphere, thus the element $f_{\sigma} := f|_{\partial \sigma}$ defines an element of $\pi_n(Y)$. We will write $c^{n+1}(f)$ for the $n+1$-cochain given by the formula 
\begin{equation}\label{cocycle}
c^{n+1}(f)(\sigma) = [f_{\sigma}].
\end{equation}

\end{construction}

\begin{definition}\label{def3.2}
We say that $f : \bar{X}^{n} \rightarrow Y$ is $n+1$-extensible iff there exists an extension 
\begin{equation}\label{pop}
\xymatrix
{ 
\bar{X}^{n} \ar[r] \ar[d] & Y \\
\bar{X}^{n+1}  \ar@{.>}[ur]&
}
\end{equation}

\end{definition}

\begin{lemma}\label{lem3.3} 
\begin{enumerate}
\item{$c^{n+1}(f)\in \mathcal{Z}^{n+1}(X, L; \pi_n (Y))$.}
\item{ Moreover $c^{n+1}(f) = 0 $ iff $f$ is $n+1$-extensible. }
\item{The class $c^{n+1}(f)$ is natural in $X$. Namely given a proper, cellular map of pairs $h : (Z, M)  \rightarrow (X, L)$, and a continuous map $i : Y \rightarrow W$ $$i_{\#}h^{\#}c^{n+1}(f) = c^{n+1}(i \circ f \circ h).$$}
\end{enumerate}
\end{lemma}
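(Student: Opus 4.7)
The plan is to follow the classical cell-by-cell obstruction argument of Hu and Fomenko almost verbatim, since the lemma involves no descriptive set theory. For part (1), there are two separate verifications. First, for an $(n+1)$-cell $\sigma$ lying in $L$, the cell $\sigma$ is already contained in $\bar X^n=L\cup_{L^n}X^n$, so $f|_\sigma$ itself is a nullhomotopy of $f|_{\partial\sigma}$ and $c^{n+1}(f)(\sigma)=0$; this is what places $c^{n+1}(f)$ in the relative cochain group $\mathcal{C}^{n+1}(X,L;\pi_n Y)$. Second, for the cocycle identity on an $(n+2)$-cell $\tau$, I would use the standard puncture-and-transversality trick: pick an interior point $p_\sigma$ in each $(n+1)$-cell $\sigma$ of $X\setminus L$, extend $f$ by radial deformation retraction onto $\partial\sigma$ to a map $\tilde f$ defined on $\bar X^{n+1}\setminus\{p_\sigma\}_\sigma$, and deform $\phi_\tau\colon S^{n+1}\to \bar X^{n+1}$ to be transverse to $\{p_\sigma\}_\sigma$, so that its preimage of each $p_\sigma$ is a finite signed point set whose signed count is the cellular incidence $[\partial\tau:\sigma]$. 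The composite $\tilde f\circ\phi_\tau$ bounds through the characteristic map of $\tau$ minus the finitely many punctures, and a local-degree bookkeeping identifies its class in $\pi_{n+1}$ of the punctured space with $\sum_\sigma[\partial\tau:\sigma]\cdot[f|_{\partial\sigma}]$; its vanishing gives $\delta c^{n+1}(f)(\tau)=0$. The simplicity of $Y$ enters precisely here, allowing all these pushforwards to be added unambiguously in $\pi_n(Y)$ independently of basepoint choices.

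Part (2) is then essentially immediate. If $f$ extends to some $\tilde f\colon\bar X^{n+1}\to Y$, then for every $(n+1)$-cell $\sigma$ the restriction $\tilde f|_\sigma$ nullhomotopes $f|_{\partial\sigma}$, so $c^{n+1}(f)=0$. Conversely, if $c^{n+1}(f)=0$, then each $f|_{\partial\sigma}$ is nullhomotopic and hence extends over $\sigma$; cells of $L$ need no further extension, and the resulting extensions glue to a continuous extension on $\bar X^{n+1}$ via the CW topology. Part (3) is a direct unpacking of the definitions using the cellularity of $h$ and the functoriality of $\pi_n$: for an $(n+1)$-cell $\tau$ of $\bar Z^{n+1}$ with attaching map $\phi_\tau^Z$, both $c^{n+1}(f\circ h)(\tau)$ and $h^{\#}c^{n+1}(f)(\tau)$ agree with $[f\circ h\circ\phi_\tau^Z]\in\pi_n(Y)$ by the definition of the pullback cochain, and composing with $i\colon Y\to W$ is tautological on homotopy groups, giving the stated naturality.

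The main obstacle will be the cocycle identity in part (1): making the puncture-and-transversality argument precise, and checking that the local-degree count produces exactly the incidence coefficients $[\partial\tau:\sigma]$ appearing in the relative cellular coboundary. This is a careful but routine exercise rather than a genuine difficulty — once the cellular boundary of each $(n+2)$-cell is represented through $\bar X^{n+1}\setminus\{p_\sigma\}_\sigma$ with the correct winding numbers, the identity $\sum_\sigma [\partial\tau:\sigma]\cdot[f|_{\partial\sigma}]=0$ in $\pi_n(Y)$ is forced by the bounding of $\tilde f\circ\phi_\tau$. With this in hand, parts (2) and (3) are formal consequences of the construction of $c^{n+1}(f)$ in \cref{con3.1}.
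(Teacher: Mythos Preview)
Your proposal is correct and follows essentially the same approach as the paper. The only difference is one of presentation: for the cocycle identity in part (1) the paper simply invokes \cite[Theorem 18.1]{fomenko_homotopical_2016} and then notes relativity, whereas you spell out the puncture-and-transversality argument that underlies that reference; for parts (2) and (3) your cell-by-cell extension and unwinding of definitions match the paper's equally brief treatment.
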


\begin{proof}
(1) By \cite[Theorem 18.1]{fomenko_homotopical_2016}, we have that $c^{n+1}(f) \in \mathcal{Z}^{n+1}(X; \pi_n(Y)) $, since it is also in $\mathcal{C}^{n+1}(X, L; \pi_n (Y))$ by construction, we know that $c^{n+1}(f)\in \mathcal{Z}^{n+1}(X, L; \pi_n (Y))$.

(2) Note that $c^{n+1}(f) = 0 $ iff  $c^{n+1}(f) (\sigma) = 0$ for all $\sigma \in \bar{X}^{n+1}$. But 
$0 = c^{n+1}(f) (\sigma) = [f_{\sigma}]$ iff $f_{\sigma}$ is null-homotopic, iff $f_{\sigma}$ has an extension to $\sigma$.

(3) Immediate from the definition of $c^{n+1}(f)$.
\end{proof}

\begin{construction}\label{con3.4}
Suppose that $f, g : \bar{X} \rightarrow Y$ are two maps such that there is a homotopy $ h : f|_{\bar{X}^{n-1}} \simeq g|_{\bar{X}^{n-1}}$. An arbitrary n-dimensional cell $\sigma$
of $\bar{X}^{n}-L$ determines a map $$k_{\sigma} := S^{n} \approx  (\partial \sigma \times [0, 1]) \cup_{(\partial \sigma \times \{ 0, 1\})} (\sigma \times  \{ 0, 1\} ) \xrightarrow{(h, (f, g))} Y.$$ 

We will define the \emph{deformation cochain} $$d^{n}(f, g; h) \in \mathcal{C}^{n}(X, L; \pi_{n}(Y))$$ to be the cochain whose value on $\sigma$ is $k_{\sigma} : S^{n} \rightarrow Y$. In the case that $h = id$, we will simply abbreviate it as $d^{n}(f, g)$ and call it the \emph{difference cochain}. 
\end{construction}

\begin{lemma}\label{lem3.5}
Suppose that $f, g : \bar{X}^{n} \rightarrow Y$ are two maps such that $h: f|_{\bar{X}^{n-1}} \simeq g|_{\bar{X}^{n-1}}$. Then the deformation cochain has the following properties:
\begin{enumerate}
\item{$d^{n}(f, g; h) = 0$ iff $f, g$ are homotopic relative to $L$ (via a homotopy extending $h$).}
\item{$\delta^{n}d^{n}(f, g; h) = c^{n+1}(f) - c^{n+1}(g)$.}
\item{For each $f : \bar{X}^{n} \rightarrow Y$ and $\alpha \in \mathcal{C}^{n}(X, L; \pi_{n}(Y))$, there is $g : \bar{X}^{n} \rightarrow Y$ such that $d^{n}(f, g) = \alpha$.}
\item{The class $d^{n}(f, g; h)$ is natural in $X$. Namely given a proper, cellular map of pairs $j : (Z, M)  \rightarrow (X, L)$, and a continuous map $i : Y \rightarrow W$: $$i_{\#}h^{\#}d^{n}(f, g; h) = d^{n}(i \circ f \circ j, i \circ g \circ j; i \circ h \circ j).$$}
\item{$d^{n}(f, g; h_{1}) + d^{n}(f, g, h_{2}) = d^{n}(f, g; h_{1} + h_{2})$.}
\end{enumerate}
\end{lemma}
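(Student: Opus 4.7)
The plan is to verify each item cell-by-cell, exploiting that $d^n(f,g;h)(\sigma) \in \pi_n(Y)$ is by definition the class of the explicit map $k_\sigma : S^n \to Y$ from \cref{con3.4}. Since $\pi_n(Y)$ classifies maps $S^n \to Y$ up to homotopy, and since null-homotopy corresponds to extension across a disk, all items reduce to geometric constructions on individual $n$-cells of $\bar{X}^n \setminus L$ together with a cellwise assembly argument.

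For (1), observe that $d^n(f,g;h)(\sigma) = 0$ in $\pi_n(Y)$ iff $k_\sigma$ is null-homotopic iff $k_\sigma$ extends across $\sigma \times [0,1]$, regarded as a disk whose boundary sphere is the domain of $k_\sigma$. Such extensions already agree on $\partial\sigma \times [0,1]$ via $h$ and on $\sigma \times \{0,1\}$ via $f$ and $g$, so they glue to a homotopy $\bar{X}^n \times [0,1] \to Y$ rel $L$ from $f$ to $g$ extending $h$; the converse is immediate. Part (3) is a cellwise construction: set $h = \mathrm{id}$, take $g = f$ on $\bar{X}^{n-1}$, and for each $n$-cell $\sigma$ modify $f|_\sigma$ by the classical cell-swap — collapse a small interior disk $D \subset \sigma$ to a basepoint of $f|_\sigma$, obtaining a pinched sphere $\sigma/\partial D \cong S^n$, and use a chosen representative of $\alpha(\sigma)$ on it. Part (4) is direct from the definitions: for an $n$-cell $\tau$ of $Z$, the map $k_\tau$ built for $(i \circ f \circ j,\, i \circ g \circ j;\, i \circ h \circ j)$ equals $i \circ k_{j(\tau)} \circ j|_\tau$, so its class in $\pi_n(W)$ is $i_\# j^\# d^n(f,g;h)(\tau)$. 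Part (5) follows by splitting the cylinder $\partial\sigma \times [0,1]$ for $h_1 + h_2$ into the two cylinders associated to $h_1$ and $h_2$ along $\partial\sigma \times \{1/2\}$; the resulting $k_\sigma$ is a pinched concatenation in $\pi_n(Y)$ of the two individual $k_\sigma$'s, which realizes the group operation in $\pi_n(Y)$.

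The main obstacle is (2), the coboundary identity. For an $(n+1)$-cell $\tau$ with $\partial\tau \cong S^n$, we must show $\delta^n d^n(f,g;h)(\tau) = \sum_\sigma [\tau:\sigma]\, [k_\sigma] = [f|_{\partial\tau}] - [g|_{\partial\tau}]$ in $\pi_n(Y)$, summed over $n$-cells $\sigma$ of $\partial\tau$ with cellular incidences $[\tau:\sigma]$. The geometric input is to decompose $\partial\tau$ into its $n$-cells and correspondingly subdivide $\partial\tau \times [0,1]$ into cylinders $\sigma \times [0,1]$; capping each piece with $f|_\sigma$ and $g|_\sigma$ yields the individual spheres $k_\sigma$, while the entire cylinder with caps $f|_{\partial\tau}, g|_{\partial\tau}$ assembles into a map whose class in $\pi_n(Y)$ is precisely $[f|_{\partial\tau}] - [g|_{\partial\tau}]$. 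A careful bookkeeping of cell orientations identifies the signed contributions cell-by-cell and yields the formula. I would follow the argument of \cite[Theorem 18.2]{fomenko_homotopical_2016} essentially verbatim; the only adjustment for the relative setting is automatic, since $h$ is trivial on $L$ and all cochains in question vanish on cells of $L$.
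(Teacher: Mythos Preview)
Your proposal is correct and largely parallels the paper's proof. For (1), (4), (5) both you and the paper treat these as immediate from the definition of $d^n(f,g;h)$, with your account supplying a bit more geometric detail. For (2), the paper takes a slightly different tack: rather than decomposing $\partial\tau \times [0,1]$ over all $n$-cells, it reduces without loss of generality to the case where $f$ and $g$ differ on a single $n$-cell $e$, and then computes $c^{n+1}(f)(\tau) - c^{n+1}(g)(\tau)$ directly as $[\tau:e]\, d^n(f,g;h)(e)$ via the degree of the composite $S^n \xrightarrow{\phi} \bar X^n \to \bar X^n/(\bar X^n - e) \simeq S^n$. Your global decomposition and the paper's single-cell reduction are equivalent in content; the paper's version is a shade more economical, while yours stays closer to the Fomenko--Fuchs presentation you cite.

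For (3) the constructions genuinely differ. The paper builds, for each $n$-cell $\sigma$, a map on the sphere $F_\sigma = (\sigma \times \{0\}) \cup (\partial\sigma \times [0,1]) \cup (\sigma \times \{1\})$ representing $\alpha(\sigma)$, uses the homotopy extension theorem to arrange that this map restricts to $f$ on $(\sigma \times \{0\}) \cup (\partial\sigma \times [0,1])$, and then reads off $g|_\sigma$ from the top face $\sigma \times \{1\}$. Your pinch-map construction is a legitimate alternative, but your description contains a small slip: $\sigma/\partial D$ is homeomorphic to $D^n \vee S^n$, not to $S^n$. What you intend is to put a radially reparametrized copy of $f|_\sigma$ on the annular region $\sigma \setminus \mathrm{int}(D)$ and a representative of $\alpha(\sigma)$ on $D/\partial D \cong S^n$; with that correction your argument goes through.
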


\begin{proof}

(1), (4), (5) are immediate from the definition of $d^{n}(f, g; h)$. 

(2) We will assume for simplicity that both $f, g$ differ on a single cell $e$. Let $\sigma$ be an $n+1$-cell of $X$. By the definition of the boundary map in the cellular chain complex, we want to show that:
$$
c^{n+1}(f)(\sigma) - c^{n+1}(g)(\sigma) = [\sigma : e] d^{n}(f, g; h)(e)
$$
where $[\sigma : e]$ is the degree of the map:
$$
\Phi_{\sigma, e} : S^{n} \xrightarrow{\phi} \bar{X}^{n} \rightarrow \bar{X}^{n}/ (\bar{X}^{n}-\sigma) \simeq S^{n}
 $$
where $\phi$ is the attaching map of $e$. The cocycles $c^{n+1}(f)$,  $c^{n+1}(g)$ differ by $ d^{n}(f, g; h)$ on $e$. Thus, by the construction of $c^{n+1}(f)$, we see that 
$c^{n+1}(f)(\sigma) - c^{n+1}(g)(\sigma) = \Phi_{\sigma, e} \circ d^{n}(f, g; h) =  [\sigma : e]d^{n}(f, g; h)$.

(3) Suppose that $\sigma$ is an $n$-cell of $\bar{X}^{n}$. Note that the boundary $F_{\sigma} : = (\sigma \times 0) \cup (\partial \sigma \times [0, 1]) \cup  (\sigma \times 1) $ of $\sigma \times [0, 1]$ has the homotopy type of an $n$-sphere. Thus, we can find a map $g_{\sigma} : F_{\sigma} \rightarrow K$ which is homotopic to $\alpha$. Note that every map $(\sigma \times 0) \cup (\partial \sigma \times [0, 1])  \rightarrow K$ is homotopic since the source is weakly contractible. Thus by the homotopy extension theorem, we can assume that $F_{\sigma}|_{(\sigma \times 0) \cup (\partial \sigma \times I)} =  f|_{\partial \sigma} \times I \times_{\partial \sigma} f|_{ \sigma}$.
If we define a function $g : \bar{X}^{n} \rightarrow K$
$$
g(x) =
\left\{
	\begin{array}{ll}
		f(x)  & \mbox{if } x \in \bar{X}^{n-1} \\
		g_{\sigma}(x, 1) & \mbox{if } x \in \sigma
	\end{array}
\right.
$$
then it is clear that 
$$
d^{n}(f, g) = \alpha.
$$

\end{proof}

\begin{lemma}\label{lem3.6}
$[c^{n+1}(f)] = 0$ iff we can find an extension of $f|_{\bar{X}^{n-1}}$ to $\bar{X}^{n+1}$.
\end{lemma}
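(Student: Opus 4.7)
The plan is to deduce \cref{lem3.6} from the combined information in \cref{lem3.3} and \cref{lem3.5}. Informally, \cref{lem3.3} says that the cocycle $c^{n+1}(f)$ itself detects extension of $f$, and \cref{lem3.5} parts (2) and (3) say that modifying $f$ on the interiors of the $n$-cells (without touching $\bar{X}^{n-1}$) changes $c^{n+1}(f)$ exactly by an arbitrary coboundary in $\mathcal{C}^{n+1}(X,L;\pi_n(Y))$. So $[c^{n+1}(f)]=0$ should be equivalent to the existence of some $g : \bar{X}^n \to Y$ agreeing with $f$ on $\bar{X}^{n-1}$ whose obstruction cocycle vanishes, which is the same as $f|_{\bar{X}^{n-1}}$ having an extension to $\bar{X}^{n+1}$.

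For the ``only if'' direction, assume $[c^{n+1}(f)] = 0$, so $c^{n+1}(f) = \delta^n d$ for some $d \in \mathcal{C}^n(X,L;\pi_n(Y))$. Apply \cref{lem3.5}(3) to produce a map $g : \bar{X}^n \to Y$ with $d^n(f,g) = d$; note that $g$ automatically agrees with $f$ on $\bar{X}^{n-1}$ since the difference cochain is defined with $h = \mathrm{id}$. By \cref{lem3.5}(2),
\[
c^{n+1}(g) = c^{n+1}(f) - \delta^n d^n(f,g) = c^{n+1}(f) - \delta^n d = 0.
\]
Now \cref{lem3.3}(2) tells us that $g$ is $(n+1)$-extensible, giving a map $\tilde{g} : \bar{X}^{n+1} \to Y$. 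Since $\tilde{g}$ restricts to $g = f$ on $\bar{X}^{n-1}$, this is the desired extension of $f|_{\bar{X}^{n-1}}$.

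For the ``if'' direction, suppose $\tilde{f} : \bar{X}^{n+1} \to Y$ is an extension of $f|_{\bar{X}^{n-1}}$, and let $g := \tilde{f}|_{\bar{X}^n}$. Then $g$ is $(n+1)$-extensible (by $\tilde{f}$) and agrees with $f$ on $\bar{X}^{n-1}$, so $c^{n+1}(g) = 0$ by \cref{lem3.3}(2), and the difference cochain $d^n(f,g)$ is defined. \cref{lem3.5}(2) then gives
\[
\delta^n d^n(f,g) = c^{n+1}(f) - c^{n+1}(g) = c^{n+1}(f),
\]
so $c^{n+1}(f)$ is a coboundary and $[c^{n+1}(f)] = 0$.

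I do not expect a serious obstacle here: all the real technical work has been done in \cref{lem3.3} and \cref{lem3.5}, and the argument is a standard cocycle/coboundary manipulation. The only minor point requiring care is that in applying \cref{lem3.5}(3) one must be sure that the newly produced $g$ agrees with $f$ on $\bar{X}^{n-1}$, which is built into the definition of $d^n(f,g)$ (with $h = \mathrm{id}$) in \cref{con3.4}; this is precisely what lets us view the extension problem as pertaining to $f|_{\bar{X}^{n-1}}$ rather than to $f$ itself.
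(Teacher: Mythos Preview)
Your proof is correct and follows essentially the same approach as the paper: both directions are deduced from \cref{lem3.3}(2) and \cref{lem3.5}(2),(3) by producing or using a map $g$ agreeing with $f$ on $\bar{X}^{n-1}$ whose obstruction cocycle vanishes. Your sign handling in the ``only if'' direction (taking $d^n(f,g)=d$ rather than $-d$) is in fact cleaner than the paper's.
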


\begin{proof}
If $[c^{n+1}(f)] = 0$, then we have that $c^{n+1}(f) = \delta(d)$ and there exists a map $g : \bar{X}^{n} \rightarrow K$ such that $d^{n}(f, g) = -d$ and $g|_{\bar{X}^{n-1}} = f|_{\bar{X}^{n-1}}$by \cref{lem3.5} (3). But then, we have 
$0 = \delta(d)-c^{n+1}(f)  = c^{n+1}(g)$, so that $g$ has an extension to $\bar{X}^{n+1}$. 
Conversely, if there exists a map $g : \bar{X}^{n} \rightarrow K$ which agrees with $f$ on $\bar{X}^{n-1}$, and can be extended to $\bar{X}^{n+1}$, then $c^{n+1}(g) = 0$ and we have $\delta^{n}(d_{f, g}) = c^{n+1}(f) - c^{n+1}(g) = c^{n+1}(f)$, so that $[c^{n+1}(f) ] = 0$.  

\end{proof}

\begin{lemma}\label{lem3.7} (c.f. \cite[Section 18.2]{fomenko_homotopical_2016})
Suppose that $f, g : \bar{X} \rightarrow Y$ are two maps that agree up to homotopy on $\bar{X}^{n-1}$. Then the deformation cochain is a cocycle which is zero iff $f|_{\bar{X}^{n}}$ and $g|_{\bar{X}^{n}}$ are homotopic.
\end{lemma}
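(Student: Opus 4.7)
The plan is to derive \cref{lem3.7} directly from the previously established \cref{lem3.5} and \cref{lem3.3}, the new ingredient being the hypothesis that $f$ and $g$ are defined on all of $\bar{X}$ rather than merely on $\bar{X}^{n}$, which is what forces $d^{n}(f, g; h)$ to be a cocycle and not just a cochain.

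First, for the cocycle assertion, I would apply \cref{lem3.5}(2) to write
$$
\delta^{n} d^{n}(f, g; h) = c^{n+1}(f|_{\bar{X}^{n}}) - c^{n+1}(g|_{\bar{X}^{n}}).
$$
Since $f, g$ are defined on $\bar{X} \supseteq \bar{X}^{n+1}$, each of $f|_{\bar{X}^{n}}$ and $g|_{\bar{X}^{n}}$ is $(n+1)$-extensible in the sense of \cref{def3.2}; \cref{lem3.3}(2) then gives $c^{n+1}(f|_{\bar{X}^{n}}) = c^{n+1}(g|_{\bar{X}^{n}}) = 0$, so $\delta^{n} d^{n}(f, g; h) = 0$, as desired. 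The ``only if'' direction of the equivalence is then immediate from \cref{lem3.5}(1): if $d^{n}(f, g; h) = 0$ then $f|_{\bar{X}^{n}} \simeq g|_{\bar{X}^{n}}$ relative to $L$ via a homotopy extending $h$, and \emph{a fortiori} homotopic. For the converse, given a homotopy $H \colon f|_{\bar{X}^{n}} \simeq g|_{\bar{X}^{n}}$, I would restrict it to obtain $h' := H|_{\bar{X}^{n-1}}$ and apply \cref{lem3.5}(1) with $h$ replaced by $h'$ to conclude $d^{n}(f, g; h') = 0$. The remaining step is to compare $d^{n}(f, g; h)$ with $d^{n}(f, g; h')$: since $h$ and $h'$ are both homotopies $f|_{\bar{X}^{n-1}} \simeq g|_{\bar{X}^{n-1}}$, their concatenation $h \cdot (h')^{-1}$ is a self-homotopy of $f|_{\bar{X}^{n-1}}$, and by the additivity in the homotopy variable recorded in \cref{lem3.5}(5), the contribution of this self-homotopy to the difference $d^{n}(f, g; h) - d^{n}(f, g; h')$ is a coboundary.

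The hard part will be precisely this comparison. The deformation cochain depends genuinely on the chosen homotopy on $\bar{X}^{n-1}$, so the natural reading of the conclusion is that $[d^{n}(f, g; h)] = 0$ in $H^{n}(X, L; \pi_{n}Y)$; the cellwise identification of the change-of-homotopy term with an element of $\mathcal{B}^{n}(X, L; \pi_{n}Y)$, via \cref{lem3.5}(5) and the cellular computation sketched in \cref{lem3.5}(2), is where the main bookkeeping of the proof concentrates. Everything else is a packaging of \cref{lem3.5} and the observation that global definedness of $f, g$ upgrades the difference cochain to a cocycle.
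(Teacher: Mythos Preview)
Your cocycle argument is exactly the paper's. For the equivalence, however, the paper takes a different and more economical route: it passes to the product pair $(X \times [0,1],\, \bar{X} \times \{0,1\})$, assembles $f$, $g$, and $h$ into a single map $\tilde{f}$ on the relative $n$-skeleton there, and observes that the obstruction cocycle $c^{n+1}(\tilde{f})$ for this extension problem \emph{is} $d^{n}(f,g;h)$ under the isomorphism $\mathcal{C}^{n+1}(\bar{X}\times[0,1],\bar{X}\times\{0,1\};\pi_n Y)\cong \mathcal{C}^{n}(X,L;\pi_n Y)$. Both directions of the equivalence then drop out of a single invocation of \cref{lem3.6}. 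The virtue of this trick is that the ``modulo coboundaries'' bookkeeping you flag as the hard part is already packaged inside \cref{lem3.6}, so no separate change-of-homotopy argument is needed.

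Your direct approach is legitimate, but note a small asymmetry: you only invoke the coboundary comparison for the ``if'' direction. If the statement is read cohomologically (as you yourself argue it should be), the ``only if'' direction also needs it---from $[d^{n}(f,g;h)]=0$, i.e.\ $d^{n}(f,g;h)=\delta e$, you cannot conclude directly from \cref{lem3.5}(1) that $f|_{\bar{X}^{n}}\simeq g|_{\bar{X}^{n}}$; you first have to modify $h$ by $e$ to kill the cochain. So the change-of-homotopy lemma is doing work in both directions of your argument, not just one. This is not a fatal gap, just an incompleteness in the write-up; the paper's product-space reformulation sidesteps the issue entirely.
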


\begin{proof}

To prove that the deformation cochain is a cocycle, note that both $f, g$ are defined on all of $\bar{X}$, we have $\delta(d^{n}(f, g; h)) = c^{n+1}(f) - c^{n+1}(g) = 0$, so the deformation cochain is a cocycle.

We now prove the second statement.
Let us fix a homotopy $h : f|_{\bar{X}^{n-1}} \simeq g|_{\bar{X}^{n-1}}$.
 We will define a continuous map on $\bar{X} \times \{ 0, 1 \} \cup \bar{X}^{n-1} \times [0, 1]$
$$
(x, t) \mapsto
\left\{
	\begin{array}{ll}
		f(x)  & \mbox{if } t = 0   \\
		h(x, t) &  \mbox{if } x \in \bar{X}^{n-1} \\
		g(x) & \mbox{if }  t = 1
	\end{array}
\right.
$$
One can check that the obstruction to the extension of this map to $\bar{X} \times \{ 0, 1 \} \cup \bar{X}^{n} \times [0, 1]$
 lies in $\mathcal{C}^{n+1}(\bar{X} \times [0, 1], \bar{X} \times \{ 0, 1\}; \pi_{n}(Y)) \cong \mathcal{C}^{n}(X, L; \pi_{n}(Y))$, and is nothing but $d^{n}(f, g; h)$.  The result is now immediate from \cref{lem3.6}.
\end{proof}

\begin{example}\label{exam3.8}

Suppose that $f : X \rightarrow Y$ is a map such that $Y$ is $n-1$-connected. Then there is a homotopy $h : f|_{\bar{X}^{n-1}} \simeq 0$. By the homotopy extension theorem, we can choose a map $g$ such that $f \simeq g$ and $g|_{\bar{X}^{n-1}} = 0$.  

Given such a map $g$, note that $g|_{\partial \sigma} = 0$, so that $g$ determines a map $g' : \sigma /  (\partial \sigma) \rightarrow X$. Since $\sigma / \partial \sigma \simeq S^{n}$ this determines an element $g_{\sigma} \in \pi_{n}(Y)$.  

It is easy to see that by \cref{con3.4} that we have 
$$
d^{n}(f, 0; h)(\sigma) = g_{\sigma}.
$$

It is also clear that since $K$ is $n-1$-connected, the cohomology class 
$$
\chi^{n}(f) := [d^{n}(f, 0; h)]
$$
is well-defined and independent of the choice of $h$.

\end{example}

The following lemma follows immediately from \cref{lem3.5} and \cref{lem3.7}. 

\begin{lemma}\label{chilemma}
Suppose that $Y$ is an $n-1$-connected space, and $f, g : X \rightarrow Y$ are maps.
\begin{enumerate}
\item{$\chi(f) = \chi(g)$ iff $g^{n}, f^{n}$ are homotopic}
\item{The class $\chi^{n}(f)$ is natural in $X$. Namely given a proper, cellular map of pairs $j : (Z, M)  \rightarrow (X, L)$, and a continuous map $i : Y \rightarrow W$: $$i_{*}j^{*}\chi^{n}(f) = \chi^{n}(i \circ f \circ j)$$}
\end{enumerate}
\end{lemma}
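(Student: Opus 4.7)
The plan is to derive both parts from the properties of the deformation cochain recorded in \cref{lem3.5} together with the homotopy-detection criterion \cref{lem3.7}. Part (2) is essentially a tautology from \cref{lem3.5}(4): picking a null-homotopy $h : f|_{\bar{X}^{n-1}} \simeq 0$, the composition $i \circ h \circ j$ is a null-homotopy of $(i \circ f \circ j)|_{\bar{Z}^{n-1}}$ (since $i \circ 0 \circ j = 0$), and \cref{lem3.5}(4) yields the cochain-level identity $i_{\#} j^{\#} d^{n}(f, 0; h) = d^{n}(i \circ f \circ j, 0; i \circ h \circ j)$; passing to cohomology classes gives $i_{*} j^{*} \chi^{n}(f) = \chi^{n}(i \circ f \circ j)$.

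For part (1), the key intermediate claim is the identity $\chi^{n}(f) - \chi^{n}(g) = [d^{n}(f, g; h_0)]$ for a suitable homotopy $h_0 : f|_{\bar{X}^{n-1}} \simeq g|_{\bar{X}^{n-1}}$. To prove it, I would fix null-homotopies $h_f$ and $h_g$ of $f|_{\bar{X}^{n-1}}$ and $g|_{\bar{X}^{n-1}}$ respectively, form their concatenation $h_0$ (traversing $h_f$ forward and $h_g$ in reverse), and derive an additivity formula $d^{n}(f, g; h_0) = d^{n}(f, 0; h_f) - d^{n}(g, 0; h_g)$. This follows from \cref{lem3.5}(5) together with the inverse identity $d^{n}(g, 0; h_g) + d^{n}(0, g; \overline{h_g}) = 0$, itself a special case of \cref{lem3.5}(5) applied to the null-concatenation of $h_g$ with its reverse.

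The $(\Leftarrow)$ direction of (1) is then immediate: a homotopy $f|_{\bar{X}^{n}} \simeq g|_{\bar{X}^{n}}$ restricts to a homotopy $h$ on $\bar{X}^{n-1}$ for which \cref{lem3.7} gives $d^{n}(f, g; h) = 0$ as a cochain, so $\chi^{n}(f) = \chi^{n}(g)$ via the identity above (combined with independence of the class $[d^{n}(f, g; h_0)]$ from the choice of $h_0$, which follows from \cref{lem3.5}(5) and the $(n-1)$-connectedness of $Y$). For $(\Rightarrow)$, assuming $\chi^{n}(f) = \chi^{n}(g)$ one has $d^{n}(f, g; h_0) = \delta c$ for some $c \in \mathcal{C}^{n-1}(X, L; \pi_{n}(Y))$; I would then adjust $h_0$ by a modification on the $(n-1)$-skeleton determined by $c$ via a \cref{lem3.5}(3)-style construction so that the new deformation cochain vanishes identically, at which point \cref{lem3.7} produces the desired homotopy $f|_{\bar{X}^{n}} \simeq g|_{\bar{X}^{n}}$.

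The main obstacle I anticipate is the $(\Rightarrow)$ direction: passing from vanishing of a cohomology class to vanishing of a cochain after adjusting the homotopy. This amounts to a second, nested application of obstruction theory---the cohomological version of \cref{lem3.6} applied to the pair $(\bar{X} \times [0,1], \bar{X} \times \{0,1\} \cup \bar{X}^{n-1} \times [0,1])$---and the technical heart of the lemma lies in making this modification precise and compatible with the choices already in play, whereas every other step is a direct transcription of \cref{lem3.5} and \cref{lem3.7}.
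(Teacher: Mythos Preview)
Your approach is correct and matches the paper's one-line justification that the lemma follows immediately from \cref{lem3.5} and \cref{lem3.7}. Your anticipated obstacle in the $(\Rightarrow)$ direction of (1) is in fact already absorbed into \cref{lem3.7}, whose proof via \cref{lem3.6} shows that it is the vanishing of the \emph{class} $[d^{n}(f,g;h)]$ (not the cochain itself) that characterizes $f|_{\bar{X}^{n}} \simeq g|_{\bar{X}^{n}}$, so once you have the identity $\chi^{n}(f)-\chi^{n}(g)=[d^{n}(f,g;h_0)]$ no further adjustment of $h_0$ is required.
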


The following is an analogue of \cite[Theorem IV.16.5]{hu_homotopy_1959}.

\begin{theorem}\label{thm3.8}
Suppose that $(X, L), (Y, *)$ are pairs that satisfy the following:
\begin{enumerate}
\item{$Y$ is $n-1$-connected}
\item{$H^{m}(X, L; \pi_{m}(Y)) = 0 = H^{m+1}(X, L; \pi_{m}(Y)) $ for each $m \ge n+1$}
\item{$X, Y$ are finite-dimensional}
\end{enumerate}

Then the map $[(X, L), (Y, *)] \rightarrow H^{n}(X, L; \pi_{n}(Y))$ given by $f \mapsto \chi^{n}(f)$ is a bijection.
\end{theorem}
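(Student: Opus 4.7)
The plan is to prove the bijectivity of $\chi^{n}$ by constructing inverses via obstruction-theoretic extension, handling surjectivity and injectivity separately. Well-definedness on homotopy classes follows from \cref{chilemma}(1).

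For surjectivity, I start with a cocycle $\alpha \in \mathcal{Z}^{n}(X, L; \pi_{n}(Y))$ and aim to produce $f : X \to Y$ with $\chi^{n}(f) = [\alpha]$. Take $f$ to be constant at $*$ on $\bar{X}^{n-1}$; by \cref{lem3.5}(3), I can extend to $f : \bar{X}^{n} \to Y$ so that $d^{n}(0, f) = \alpha$ (where $0$ denotes the constant map at $*$). \cref{lem3.5}(2) then gives $c^{n+1}(f) = c^{n+1}(0) - \delta d^{n}(0, f) = -\delta \alpha = 0$, so by \cref{lem3.3}(2) the map extends to $\bar{X}^{n+1}$. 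Inductively, for $m \ge n+1$, the obstruction $[c^{m+1}(f)] \in H^{m+1}(X, L; \pi_{m}(Y))$ vanishes by hypothesis~(2), so \cref{lem3.6} provides an extension to $\bar{X}^{m+1}$ obtained by modifying $f$ only on $\bar{X}^{m}$; since $m \ge n+1$, the restriction $f|_{\bar{X}^{n}}$ is preserved, and hence $\chi^{n}(f) = [\alpha]$ throughout. The finite-dimensionality of $X$ terminates the induction.

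For injectivity, suppose $\chi^{n}(f) = \chi^{n}(g)$. Using the $(n-1)$-connectedness of $Y$ and the homotopy extension theorem, I may replace $f, g$ by homotopic maps with $f|_{\bar{X}^{n-1}} = g|_{\bar{X}^{n-1}} = *$. I now pose the extension problem for the CW pair $(X \times I,\, L \times I \cup X \times \partial I)$: find $F : X \times I \to Y$ restricting to $f, g$ on $X \times \{0, 1\}$ and to $*$ on $L \times I$. First extend $F$ to $\bar{X}^{n-1} \times I$ by the constant map, which is consistent with the boundary data. Then extend skeleton by skeleton: the obstruction to adding the $(m+1)$-cells $\sigma \times I^{\circ}$ of the product pair (with $\sigma$ an $m$-cell of $X - L$) is a class in $H^{m+1}(X \times I,\, L \times I \cup X \times \partial I; \pi_{m}(Y)) \cong H^{m}(X, L; \pi_{m}(Y))$, via the standard suspension-type isomorphism. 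For $m \le n-1$ this vanishes since $\pi_{m}(Y) = 0$; for $m = n$ one identifies the obstruction with $\chi^{n}(f) - \chi^{n}(g) = 0$; for $m \ge n+1$ it vanishes by hypothesis~(2). Finite-dimensionality terminates the induction and yields the desired relative homotopy.

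The main obstacle I foresee is the identification at $m = n$ of the obstruction cocycle with the difference $d^{n}(f|_{\bar{X}^{n}}, g|_{\bar{X}^{n}})$ (and thence with $\chi^{n}(f) - \chi^{n}(g)$). This requires unwinding the structure of $\partial(\sigma \times I) \cong S^{n}$: after collapsing $\partial \sigma \times I$ to a point, the attaching map factors through $S^{n} \vee S^{n}$ with the two summands representing $f|_{\sigma}$ and $g|_{\sigma}$ in $\pi_{n}(Y)$ with opposite orientations. Matching signs and orientations with the conventions of \cref{con3.4} and \cref{exam3.8}, while routine, requires care.
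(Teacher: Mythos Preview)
Your proposal is correct and follows essentially the same obstruction-theoretic strategy as the paper. The surjectivity arguments are nearly identical; your observation that $c^{n+1}(f) = -\delta\alpha = 0$ exactly (since $\alpha$ is a cocycle) is slightly cleaner than the paper's route through \cref{lem3.6}, as it allows a direct extension via \cref{lem3.3}(2) without modifying $f|_{\bar{X}^{n}}$.

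The only genuine difference is in the injectivity argument. The paper simply invokes \cref{chilemma}(1) to get $f|_{\bar{X}^{n}} \simeq g|_{\bar{X}^{n}}$, and then iterates \cref{lem3.7}: at each stage $m \ge n$, the deformation class $[d^{m+1}(f,g;h)] \in H^{m+1}(X,L;\pi_{m+1}(Y))$ vanishes by hypothesis, so the homotopy on $\bar{X}^{m}$ extends to $\bar{X}^{m+1}$. Your approach instead unwinds \cref{lem3.7} into the explicit extension problem on $(X \times I,\, L \times I \cup X \times \partial I)$, which is exactly how \cref{lem3.7} is proven in the paper. This buys you nothing new, but does force you into the sign-and-orientation check at $m=n$ that you flagged; the paper avoids this by packaging that identification once and for all inside \cref{lem3.7} and \cref{chilemma}. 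In short: same proof, but the paper's version is more economical because it reuses the lemma machinery already in place.
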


\begin{proof}
First, we prove surjectivity. Let $\alpha \in H^{n}(X, L; \pi_{n}(Y))$.
By \cref{lem3.5} (3), we can choose some $f$ such that $[d^{n}(f, 0; h) ]= \alpha$. We have that $c^{n+1}(f) = c^{n+1}(f) - c^{n+1}(0) = \delta (d^{n}(f, 0; h) )$, so that $[c^{n+1}(f)] = 0$. Thus, we can find an extension $g$ of $f|_{\bar{X}^{n-1}}$ to $\bar{X}^{n+1}$ by \cref{lem3.6}. Since $X$ is finite dimensional, we can use \cref{lem3.6} to further extend this up to homotopy to a map $j : (X, L) \rightarrow (Y, *)$, and  $\chi^{n}(j) = [d^{n}(j, 0; h)] = \alpha$ by \cref{lem3.7}.

We now prove injectivity. Suppose that $\chi^{n}(f) = \chi^{n}(g)$. Then we have 
 $f|_{\bar{X}^{n}} \simeq g|_{\bar{X}^{n}}$. By repeated application of \cref{lem3.7} and the second hypothesis above, we conclude $f \simeq g$. 
\end{proof}

\section{Preliminaries on Shape Theory}\label{sec4}
In this section, we will describe a few basic results from the branch of topology known as shape theory, that will be needed in the proof of \cref{thm5.1}.  Shape theory was invented to address the failure of various theorems in algebraic topology for non-CW complexes. For instance, the topologists sine curve is not weakly contractible, but the nonzero homotopy groups are trivial, witnessing the failure of Whitehead's theorem.

The approach to shape theory we discuss here is based on so-called \emph{inverse systems approach}. In this approach, we represent a space as an inverse limit of a diagram  of CW complexes in a canonical way. This diagram further defines a \emph{pro-object} in the homotopy category of polyhedra, in a functorial way which will be explained below. By working with such pro-objects, one can define versions of homotopy groups and other topological constructions that behave better for non-CW complexes.

\begin{definition}\label{def4.1}

The \emph{pro-category} \textrm{Pro}$\left( \mathcal{C}\right) $ associated
with a category $\mathcal{C}$ has cofiltering diagrams in $\mathcal{C}$ as
objects, and morphisms defined by%
\begin{equation*}
\Hom_{\mathrm{Pro}\left( \mathcal{C}\right) }\left( X,Y\right) =%
\mathrm{co\mathrm{lim}}_{s\in I}\mathrm{lim}_{t\in J}\Hom_{\mathcal{C%
}}\left( X_{s},Y_{t}\right) \text{,}
\end{equation*}%
where $I$ and $J$ are the categories indexing $X$ and $Y$, respectively
(which can be assumed to be ordered sets); see \cite[Section 2.1]
{edwards_cech_1976}. The morphisms in \textrm{Pro}$\left( \mathcal{C}\right) 
$ are called pro-morphisms, and the isomorphisms in $\mathrm{Pro}\left( 
\mathcal{C}\right) $ are called \emph{pro-isomorphisms}.

\end{definition}

 One regards $\mathcal{C}$
as a full subcategory of $\mathrm{Pro}\left( \mathcal{C}\right) $ by
identifying the objects of $\mathcal{C}$ as the pro-objects indexed by the
trivial ordered set with just one element. The ind-category $\mathrm{ind}%
\left( \mathcal{C}\right) $ of $\mathcal{C}$ is defined dually by as $%
\mathrm{pro}\left( \mathcal{C}^{\mathrm{op}}\right) ^{\mathrm{op}}$.

\begin{definition}\label{def4.2}

A \emph{level pro-morphism} between pro-objects $X$, $Y$ with the same
indexing set $I$ is a pro-morphism that is represented by a natural
transformation, namely a family of morphisms $f_{s}:X_{s}\rightarrow Y_{s}$
for $s\in I$ such that for every arrow $s\rightarrow t$ in $I$ the diagram%
\begin{equation*}
\begin{array}{ccc}
X_{s} & \overset{f_{s}}{\rightarrow } & Y_{s} \\ 
\downarrow &  & \downarrow \\ 
X_{t} & \overset{f_{t}}{\rightarrow } & Y_{t}%
\end{array}%
\end{equation*}%
commutes. By \cite[Proposition 2.1.4]{edwards_cech_1976} every pro-morphism
can be represented up to isomorphism (in the category of arrows) by a level
pro-morphism.

\end{definition}

Suppose now that $\mathcal{C}$ is a category and $\mathcal{D}$ is a full
subcategory. A $\mathcal{D}$-\emph{expansion }of $X\in \mathcal{C}$ is a
pro-object $\mathbf{X}$ over $\mathcal{D}$ together with a pro-morphism $%
i:X\rightarrow \mathbf{X}$ satisfying the following universal property:
given any pro-object $\boldsymbol{Y}$ over $\mathcal{D}$ and pro-morphism $%
f:X\rightarrow \boldsymbol{Y}$ there exists a unique morphism $g:\mathbf{X}%
\rightarrow \mathbf{Y}$ such that $g\circ i=f$. Clearly, the $\mathcal{D}$%
-expansion of $X$, when it exists, is unique up to isomorphism.
We are particularly interested in the case that $\mathcal{C} = \mathrm{Ho}(\mathbf{Poly})$ and $\mathcal{D} = \mathrm{Ho}(LC)$. 
Here, $\mathrm{Ho}(LC)$ is the category whose objects are locally compact Polish spaces and  
$$\hom_{\mathrm{Ho}(LC)}(X, Y) = [X, Y]$$ and $ \mathrm{Ho}(\mathbf{Poly})$ is the full subcategory whose objects are polyhedrons. 
\\

Let $\mathcal{U}$ be a locally finite open cover of $X$. A map $%
c:X\rightarrow \left\vert N\left( \mathcal{U}\right) \right\vert $ is \emph{%
canonical }if $c^{-1}\left( \mathrm{St}_{N\left( \mathcal{U}\right) }\left(
U\right) \right) \subseteq U$ for every open set $U$ of $\mathcal{U}$.
Canonical maps exist associated with a given locally finite open cover of a
locally compact Polish space exist \cite[Appendix 1, Section 3.1, Theorem 3]%
{mardesic_shape_1982} and are unique up to homotopy \cite[Appendix 1,
Section 3.1, Theorem 7]{mardesic_shape_1982}.

\begin{theorem}\label{thm4.3}
If $X$ is a locally compact Polish space, then $i_{X}:X\rightarrow \mathbf{SH}\left(
X\right) $ is a $\mathrm{Ho}\left( \mathbf{Poly}\right) $-expansion of $X$,
where $\mathbf{SH}\left( X\right) =\left( \left\vert N\left( \mathcal{U}%
_{\alpha }^{X}\right) \right\vert \right) _{\alpha \in \mathcal{N}^{\ast }}$
and $i_{X}$ is represented by $\left( c_{\alpha }^{X}\right) _{\alpha \in 
\mathcal{N}^{\ast }}$ for canonical maps $c_{\alpha }^{X}:X\rightarrow
\left\vert N\left( \mathcal{U}_{\alpha }^{X} \right) \right\vert $. 

This defines
a functor $\mathbf{SH}:\mathrm{LC}\rightarrow \mathrm{pro}\left( \mathrm{Ho}%
\left( \mathbf{Poly}\right) \right) $, where for a continuous function $%
f:X\rightarrow Y$ one defines $\mathbf{SH}\left( f\right) $ to be the
corresponding pro-morphism obtained from $i_{Y}\circ f$ applying the
universal property of the $\mathrm{Ho}\left( \mathbf{Poly}\right) $-expansion
of $X$. When $P$ is a polyhedron, one can set $\mathbf{SH}\left( P\right) =P$
and $i_{P}$ to be the identity morphism of $P$.

\end{theorem}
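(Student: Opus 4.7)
The statement is essentially a specialization of a classical result in shape theory (c.f.\ \cite[Chapter I, Section 4]{mardesic_shape_1982}), adapted to the paper's covering-system framework. The plan is to verify two separate claims: that $i_{X}$ satisfies the universal property of a $\mathrm{Ho}(\mathbf{Poly})$-expansion, and that the resulting assignment $X \mapsto \mathbf{SH}(X)$ is functorial. The first is the substantive one; the second is formal.

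First I would check that $\mathbf{SH}(X)$ is a well-defined pro-object in $\mathrm{Ho}(\mathbf{Poly})$. The index set $\mathcal{N}^{\ast}$ is cofiltered, and for $\alpha \le \beta$ the refinement map $r_{\alpha}^{\beta}$ in the covering system induces a simplicial map $N(r_{\alpha}^{\beta})$ and hence a continuous map $|N(r_{\alpha}^{\beta})| : |N(\mathcal{U}_{\beta}^{X})| \to |N(\mathcal{U}_{\alpha}^{X})|$; the cocycle identities $r_{\alpha}^{\gamma} = r_{\alpha}^{\beta} \circ r_{\beta}^{\gamma}$ give strict functoriality, which is already more than is needed in $\mathrm{Ho}(\mathbf{Poly})$. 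To see that $(c_{\alpha}^{X})_{\alpha}$ defines a pro-morphism $i_{X}$, I would check that for $\alpha \le \beta$ the composition $|N(r_{\alpha}^{\beta})| \circ c_{\beta}^{X}$ is itself canonical with respect to $\mathcal{U}_{\alpha}^{X}$; uniqueness of canonical maps up to homotopy (\cite[Appendix 1, Section 3.1, Theorem 7]{mardesic_shape_1982}) then yields $|N(r_{\alpha}^{\beta})| \circ c_{\beta}^{X} \simeq c_{\alpha}^{X}$.

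The core of the argument is the universal property. Given any pro-object $\mathbf{Y} = (Y_{t})_{t \in J}$ over $\mathrm{Ho}(\mathbf{Poly})$ and a pro-morphism $f : X \to \mathbf{Y}$, I would factor $f$ index by index. For each $t$, $f$ is represented by a homotopy class $[f_{t}] : X \to Y_{t}$. The essential technical input is a factorization lemma: for any continuous $f_{t} : X \to P$ into a polyhedron, one fixes a triangulation of $P$, pulls back its star cover to an open cover $\mathcal{V}$ of $X$, chooses $\alpha_{t} \in \mathcal{N}^{\ast}$ with $\mathcal{U}_{\alpha_{t}}^{X} \prec \mathcal{V}$ using cofinality, and builds a simplicial map $|N(\mathcal{U}_{\alpha_{t}}^{X})| \to P$ whose composition with $c_{\alpha_{t}}^{X}$ is homotopic to $f_{t}$; the homotopy comes from the fact that both maps are carried by $\mathcal{V}$, so the canonical-map uniqueness theorem cited above applies. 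Compatibility of the resulting classes $[g_{t}]$ with the bonding maps of $\mathbf{Y}$, and the uniqueness of the induced $g : \mathbf{SH}(X) \to \mathbf{Y}$, reduce once again to uniqueness of canonical maps. I expect this simplicial-approximation step to be the main obstacle, both conceptually and in writing out careful details, because it is the only place where one must produce a map rather than argue formally.

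Once the universal property is established, functoriality of $\mathbf{SH}$ is immediate: define $\mathbf{SH}(f)$ as the unique morphism with $\mathbf{SH}(f) \circ i_{X} = i_{Y} \circ f$, obtained by applying the universal property of $\mathbf{SH}(X)$ to the pro-morphism $i_{Y} \circ f : X \to \mathbf{SH}(Y)$. The identities $\mathbf{SH}(\mathrm{id}_{X}) = \mathrm{id}_{\mathbf{SH}(X)}$ and $\mathbf{SH}(g \circ f) = \mathbf{SH}(g) \circ \mathbf{SH}(f)$ follow from the uniqueness clause after composing with $i_{X}$. In the special case that $X = P$ is a polyhedron, the trivial pro-object $P$ together with $i_{P} = \mathrm{id}_{P}$ tautologically satisfies the universal property, so setting $\mathbf{SH}(P) = P$ is consistent.
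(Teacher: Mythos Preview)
The paper does not supply a proof of this theorem; it is stated in the preliminaries section as a classical fact from shape theory (the \v{C}ech expansion via nerves of cofinal covers is the standard $\mathrm{Ho}(\mathbf{Poly})$-expansion in \cite{mardesic_shape_1982}). Your sketch correctly reconstructs the classical argument---pro-object well-definedness from the covering-system axioms, the compatibility $|N(r_{\alpha}^{\beta})|\circ c_{\beta}^{X}\simeq c_{\alpha}^{X}$ from uniqueness of canonical maps, the universal property via simplicial approximation through a sufficiently fine nerve, and functoriality from the uniqueness clause---so there is nothing to compare against and nothing to correct.
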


\begin{definition}\label{def4.4}
The \emph{shape category }$\mathrm{Sh}\left( \mathrm{LC}\right) $ of locally
compact Polish spaces is defined to have locally compact Polish spaces as
objects, and $\Hom_{\mathrm{Sh}\left( \mathrm{LC}\right) }\left(
X,Y\right) :=\Hom_{\mathrm{pro}\left( \mathrm{Ho}\left( \mathbf{Poly}%
\right) \right) }\left( \mathbf{SH}\left( X\right) ,\mathbf{SH}\left(
Y\right) \right) $ for objects $X,Y$ of $\mathrm{Sh}\left( \mathrm{LC}%
\right) $. By definition, if $X$ is a locally compact Polish space and $P$
is a polyhedron, then%
\begin{equation*}
\Hom_{\mathrm{Sh}\left( \mathrm{LC}\right) }\left( X,P\right) \cong 
\mathrm{col\mathrm{im}}_{\alpha \in \mathcal{N}^{\ast }}[|N\left( \mathcal{U}%
_{\alpha }^{X}\right)| ,P]\text{.}
\end{equation*}%

\end{definition}

As a particular instance of \cite[Chapter I, Section 2.3, Theorem 4]
{mardesic_shape_1982} one obtains the following:

\begin{lemma}\label{lem4.5}
Given a locally compact Polish space $X$ and a polyhedron $P$:
There is a bijection
\begin{equation*}
\lbrack X,P]\cong \Hom_{\mathrm{Sh}\left( \mathrm{LC}\right) }\left(
X,P\right)
\end{equation*}%
defined by $[f]\mapsto \mathbf{SH}\left( f\right) $ for $f\in C\left(
X,P\right) $.
\end{lemma}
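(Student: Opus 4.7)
The plan is to derive the bijection as a purely formal consequence of \cref{thm4.3}: since $i_X : X \to \mathbf{SH}(X)$ is a $\Ho(\mathbf{Poly})$-expansion and $\mathbf{SH}(P) = P$ with $i_P$ equal to the identity, the universal property of expansions already encodes the claim. Concretely, \cref{def4.4} unfolds $\Hom_{\mathrm{Sh}(\mathrm{LC})}(X,P)$ as the filtered colimit $\mathrm{colim}_{\alpha\in\mathcal{N}^{\ast}}[|N(\mathcal{U}_\alpha^X)|,P]$, and the assignment $[f]\mapsto \mathbf{SH}(f)$ is defined, via this universal property applied to the pro-morphism $i_P\circ f = f : X \to P$, as the unique pro-morphism $g:\mathbf{SH}(X)\to P$ satisfying $g\circ i_X = [f]$ in $\Ho$. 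Well-definedness on homotopy classes is then automatic because the universal property depends on $f$ only through its homotopy class.

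For injectivity, I would simply compose with $i_X$: if $\mathbf{SH}(f)=\mathbf{SH}(g)$, then $[f]=\mathbf{SH}(f)\circ i_X=\mathbf{SH}(g)\circ i_X=[g]$ in $\Ho$, hence $f\simeq g$. For surjectivity, I would unfold an arbitrary pro-morphism $\phi:\mathbf{SH}(X)\to P$ as a representative $[h_\alpha]\in[|N(\mathcal{U}_\alpha^X)|,P]$ at some index $\alpha\in\mathcal{N}^{\ast}$, and then set $f := h_\alpha\circ c_\alpha^X : X\to P$, where $c_\alpha^X$ is a canonical map as in \cref{thm4.3}. Since at level $\alpha$ the pro-morphism $\phi\circ i_X$ is represented by $h_\alpha\circ c_\alpha^X = f$, both $\phi$ and $\mathbf{SH}(f)$ satisfy the equation $(-)\circ i_X = [f]$, and uniqueness in the universal property yields $\mathbf{SH}(f)=\phi$.

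The only nontrivial bookkeeping, and the step I expect to be the main obstacle, is that $X$ itself is not a polyhedron, so the universal property of the $\Ho(\mathbf{Poly})$-expansion must be interpreted in a larger ambient category containing both $X$ (viewed as a trivially indexed pro-object) and the polyhedra $|N(\mathcal{U}_\alpha^X)|$; in particular one must justify that the level-representation of $i_X$ by the canonical maps $c_\alpha^X$ really recovers the colimit description of $\Hom_{\mathrm{Sh}(\mathrm{LC})}(X,P)$ upon composition with elements of $\mathrm{pro}(\Ho(\mathbf{Poly}))$. This is ultimately a direct consequence of \cref{thm4.3} and the uniqueness up to homotopy of canonical maps recorded just above it (cf.\ \cite[Appendix 1, Section 3.1]{mardesic_shape_1982}), so once the setup is in place both surjectivity and injectivity reduce to the short arguments given above.
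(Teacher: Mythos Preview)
Your argument is correct and is essentially the standard derivation of this fact from the universal property of expansions. The paper itself does not give a proof of \cref{lem4.5} but simply records it as a particular instance of \cite[Chapter I, Section 2.3, Theorem 4]{mardesic_shape_1982}; your proposal unwinds exactly this: the universal property in \cref{thm4.3} says precisely that $g\mapsto g\circ i_X$ is a bijection $\Hom_{\mathrm{pro}(\Ho(\mathbf{Poly}))}(\mathbf{SH}(X),P)\to [X,P]$, and $[f]\mapsto \mathbf{SH}(f)$ is by definition its inverse. Your separate injectivity/surjectivity arguments are fine but slightly more elaborate than necessary, and the ``obstacle'' you flag about the ambient category is already absorbed into the definition of a $\mathcal{D}$-expansion (the pro-morphism $i_X$ and the competing $f:X\to P$ live in $\mathrm{pro}(\Ho(\mathrm{LC}))$, with $\Ho(\mathbf{Poly})$ sitting inside as a full subcategory), so no additional work is required there.
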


\begin{lemma}\label{lem4.6}
Suppose that $X$ is a locally compact Polish space and $P$ is a polyhedron.

\begin{enumerate}
\item Suppose that $p:\left\vert N\left( \mathcal{U}_{\alpha }^{X}\right)
\right\vert \rightarrow P$ is such that $p\circ c_{\alpha }^{X}$ is
homotopic to $f$. Then for every $\alpha \leq \beta $ we have that $p\circ
r^{\beta }_{\alpha } \circ c_{\beta }^{X}$ is homotopic to $f$.

\item Suppose that $p_{\alpha }:|N\left( \mathcal{U}_{\alpha }^{X}\right)
|\rightarrow P$ and $p_{\beta }:|N\left( \mathcal{U}_{\beta }^{X}\right)
|\rightarrow P$ is such that $p_{\alpha} \circ c_{\alpha }^{X}$ is homotopic to $f$
and $p_{\beta} \circ c_{\beta }^{X}$ is homotopic to $f$. Then there exists $\gamma
\in \mathcal{N}^{\ast }$ such that $\alpha \leq \gamma $ and $\beta \leq
\gamma $ and $p_{\alpha }\circ r^{\gamma }_{\alpha }$ is homotopic to $%
p_{\beta }\circ r^{\gamma }_{\beta }$.
\end{enumerate}
\end{lemma}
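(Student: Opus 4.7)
The plan is to reduce both parts to two ingredients: (a) the uniqueness of canonical maps up to homotopy from \cite[Appendix 1, Section 3.1, Theorem 7]{mardesic_shape_1982}, and (b) the universal property of the $\mathrm{Ho}(\mathbf{Poly})$-expansion $i_{X}:X\rightarrow\mathbf{SH}(X)$ given by \cref{thm4.3}, unpacked through the colimit description of pro-morphisms in \cref{def4.1}.

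For part (1), I first check that $|r^{\beta}_{\alpha}|\circ c^{X}_{\beta}:X\rightarrow |N(\mathcal{U}^{X}_{\alpha})|$ is itself a canonical map for the cover $\mathcal{U}^{X}_{\alpha}$. This is a short unravelling: for any $U\in\mathcal{U}^{X}_{\alpha}$ one has $|r^{\beta}_{\alpha}|^{-1}(\mathrm{St}(U))=\bigcup\{\mathrm{St}(V):V\in\mathcal{U}^{X}_{\beta},\ r^{\beta}_{\alpha}(V)=U\}$, so using canonicality of $c^{X}_{\beta}$ and the refinement condition $V\subseteq r^{\beta}_{\alpha}(V)=U$, the preimage under $|r^{\beta}_{\alpha}|\circ c^{X}_{\beta}$ of $\mathrm{St}(U)$ is contained in $U$. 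Uniqueness of canonical maps up to homotopy then yields $r^{\beta}_{\alpha}\circ c^{X}_{\beta}\simeq c^{X}_{\alpha}$, and composing with $p$ gives $p\circ r^{\beta}_{\alpha}\circ c^{X}_{\beta}\simeq p\circ c^{X}_{\alpha}\simeq f$.

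For part (2), set $\gamma_{0}:=\alpha\vee\beta$. Using part (1), both $p_{\alpha}\circ r^{\gamma_{0}}_{\alpha}\circ c^{X}_{\gamma_{0}}$ and $p_{\beta}\circ r^{\gamma_{0}}_{\beta}\circ c^{X}_{\gamma_{0}}$ are homotopic to $f$, hence homotopic to each other. Viewing $p_{\alpha}\circ r^{\gamma_{0}}_{\alpha}$ and $p_{\beta}\circ r^{\gamma_{0}}_{\beta}$ as pro-morphisms $\mathbf{SH}(X)\rightarrow P$ represented at level $\gamma_{0}$, they become equal after precomposition with $i_{X}$, and hence by the universal property of the $\mathrm{Ho}(\mathbf{Poly})$-expansion (\cref{thm4.3}) they are already equal as pro-morphisms. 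Invoking the colimit description $\Hom_{\mathrm{pro}(\mathrm{Ho}(\mathbf{Poly}))}(\mathbf{SH}(X),P)\cong\mathrm{colim}_{\gamma}[|N(\mathcal{U}^{X}_{\gamma})|,P]$, this equality means there exists $\gamma\ge\gamma_{0}$ such that $p_{\alpha}\circ r^{\gamma_{0}}_{\alpha}\circ r^{\gamma}_{\gamma_{0}}\simeq p_{\beta}\circ r^{\gamma_{0}}_{\beta}\circ r^{\gamma}_{\gamma_{0}}$. By the compatibility relation $r^{\gamma_{0}}_{\alpha}\circ r^{\gamma}_{\gamma_{0}}=r^{\gamma}_{\alpha}$ (and analogously for $\beta$) from \cref{def2.5}, this is the required homotopy $p_{\alpha}\circ r^{\gamma}_{\alpha}\simeq p_{\beta}\circ r^{\gamma}_{\beta}$.

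The main obstacle is precisely the passage in part (2) from a homotopy of maps \emph{out of $X$} to a homotopy of maps out of a sufficiently refined nerve $|N(\mathcal{U}^{X}_{\gamma})|$; everything else is bookkeeping. This passage is exactly what the expansion property encodes once it is translated into the colimit language of \cref{def4.1}, so the care required is in aligning the universal property of \cref{thm4.3} with the explicit description of equality in $\mathrm{pro}(\mathrm{Ho}(\mathbf{Poly}))$.
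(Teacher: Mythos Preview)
Your proof is correct and follows essentially the same approach as the paper: part (1) via the fact that $r^{\beta}_{\alpha}\circ c^{X}_{\beta}$ is canonical (you supply the short verification the paper omits), and part (2) via the identification of $\Hom_{\mathrm{pro}(\mathrm{Ho}(\mathbf{Poly}))}(\mathbf{SH}(X),P)$ with the colimit $\mathrm{colim}_{\gamma}[|N(\mathcal{U}^{X}_{\gamma})|,P]$. The only cosmetic difference is that the paper invokes \cref{lem4.5} directly (so that $p_{\alpha}$ and $p_{\beta}$ both represent $\mathbf{SH}(f)$ in the colimit) rather than first passing to $\gamma_{0}=\alpha\vee\beta$ and appealing to the universal property of \cref{thm4.3}; the content is the same.
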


\begin{proof}
(1) This follows from the fact that $r^{\beta }_{\alpha }\circ c_{\beta
}^{X} $ is a canonical map, and hence homotopic to $c_{\alpha }^{X}$.

(2) By \cref{lem4.5}
there is a bijection
\begin{equation*}
\lbrack X,P]\cong \Hom_{\mathrm{Sh}\left( \mathrm{LC}\right) }\left(
X,P\right)
\end{equation*}%
where%
\begin{equation*}
\Hom_{\mathrm{Sh}\left( \mathrm{LC}\right) }\left( X,P\right) \cong 
\mathrm{col\mathrm{im}}_{\gamma \in \mathcal{N}^{\ast }}[|N\left( \mathcal{U}%
_{\gamma }^{X}\right)| ,P]\text{.}
\end{equation*}%
As $p_{\alpha} \circ c_{\alpha }^{X}$ and $p_{\beta} \circ c_{\beta }^{X}$ represent the same
element $\mathbf{SH}\left( f\right) $ of $\Hom_{\mathrm{Sh}\left( 
\mathrm{LC}\right) }\left( X,P\right) $, the conclusion follows.
\end{proof}

The next lemma gives a sufficient condition for when a morphism $f: N(\mathcal{U}_{\alpha}^{X}) \rightarrow Y$ represents $\mathbf{Sh}(f)$ after suitable refinement. 

\begin{proposition}\label{prop4.7}
Let $X$ be a locally compact Polish space, and $K$ be a locally finite
countable simplicial complex. Suppose that $\mathcal{U}$ is a locally finite
open cover of $X$. Let $p:N\left( \mathcal{U}\right) \rightarrow K$ be a
simplicial map such that, for every $U\in \mathcal{U}$ and $x\in U$, $%
f\left( x\right) \in \mathrm{St}_{K}\left( p\left( U\right) \right) $. Then
if $c:X\rightarrow \left\vert N\left( \mathcal{U}\right) \right\vert $ is a
canonical map, then $|p| \circ c$ is homotopic to $f$.
\end{proposition}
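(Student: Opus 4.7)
The plan is to construct an explicit homotopy $H\colon X\times [0,1]\to |K|$ from $|p|\circ c$ to $f$ by moving along straight-line segments inside closed simplices of $K$. The crucial geometric step is to show, for each $x\in X$, that $|p|(c(x))$ and $f(x)$ both lie in a common closed simplex $\sigma_x\subseteq |K|$; once this is in hand, the affine homotopy $H(x,t)=(1-t)\,|p|(c(x))+t\,f(x)$, interpreted via barycentric coordinates (equivalently in the ambient $\mathbb{R}^{\mathbb{N}}$), is forced to land in $|K|$ and to be continuous.

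For the geometric step, I would let $\tau=\{U_0,\dots,U_n\}$ denote the carrier of $c(x)$ in $N(\mathcal{U})$, so that $c(x)\in\mathrm{St}_{N(\mathcal{U})}(U_i)$ for every $i$. Canonicity of $c$ gives $c^{-1}(\mathrm{St}_{N(\mathcal{U})}(U_i))\subseteq U_i$, hence $x\in\bigcap_i U_i$. The star hypothesis on $p$ then yields $f(x)\in\mathrm{St}_K(p(U_i))$ for every $i$, which says that each $p(U_i)$ appears as a vertex of the carrier $\sigma_x$ of $f(x)$ in $K$. Since $p$ is simplicial, the set $\{p(U_0),\dots,p(U_n)\}$ spans a simplex of $K$, and by the previous sentence this simplex is a face of $\sigma_x$. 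As $|p|(c(x))$ lies in this face by the definition of $|p|$ on barycentric coordinates, both $|p|(c(x))$ and $f(x)$ sit inside the convex closed simplex $\sigma_x$.

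Having secured this, the segment from $|p|(c(x))$ to $f(x)$ lies in $\sigma_x\subseteq|K|$, so $H$ is well defined as a map into $|K|$. Because $K$ is locally finite, the geometric realization $|K|$ carries the metric topology inherited from $\mathbb{R}^{\mathbb{N}}$ and its barycentric coordinates are continuous, so $H$ inherits continuity from its endpoints $|p|\circ c$ and $f$. The equalities $H(\cdot,0)=|p|\circ c$ and $H(\cdot,1)=f$ are immediate. The main obstacle in this proof is really the geometric step in the second paragraph, where the interplay between the canonicity of $c$ (which pulls the combinatorics of $N(\mathcal{U})$ back to actual membership in $\mathcal{U}$) and the star hypothesis on $p$ (which pushes that membership forward to the carrier of $f(x)$) is essential; once the common carrier simplex is produced, the straight-line construction is routine.
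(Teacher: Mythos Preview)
Your argument is correct and is more direct than the paper's. The key geometric step---that the carrier $\sigma_x$ of $f(x)$ contains $|p|(c(x))$---is sound: canonicity gives $x\in\bigcap_i U_i$, the star hypothesis forces every $p(U_i)$ to be a vertex of $\sigma_x$, and since $p$ is simplicial the face $\{p(U_0),\dots,p(U_n)\}$ sits inside $\sigma_x$ and carries $|p|(c(x))$. The single straight-line homotopy then lands in $|K|$, and your continuity justification via the metric topology on $|K|$ (valid because $K$ is countable and locally finite) is exactly what is needed.

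The paper takes a less economical route: it passes to a locally finite refinement $\mathcal{V}$ of $\{c^{-1}(\mathrm{St}_{N(\mathcal{U})}(W)):W\in\mathcal{U}\}$, chooses $\kappa(V)\in\mathcal{U}$ with $c(V)\subseteq\mathrm{St}_{N(\mathcal{U})}(\kappa(V))$, and uses a partition of unity $(\sigma_V)$ subordinate to $\mathcal{V}$ to build an intermediate map $g(x)=\sum_V\sigma_V(x)\,e_{p\kappa(V)}$. It then produces two straight-line homotopies, $f\simeq g$ and $|p|\circ c\simeq g$, each justified by the observation that $f(x)$ and $|p|(c(x))$ lie in $\mathrm{St}_K(p\kappa(V))$ whenever $x\in V$. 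In effect the paper is showing that both maps are star-close to the common vertex map $g$, rather than showing (as you do) that $|p|(c(x))$ already lies in the carrier of $f(x)$. Your approach eliminates the auxiliary cover, the partition of unity, and the intermediate map; the paper's approach is closer to standard ``contiguity'' arguments from the shape-theory literature (e.g.\ Marde\v{s}i\'c--Segal), which may explain the choice.
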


\begin{proof}
Let $\mathcal{V}$ be a locally finite open cover of $X$ that is a refinement
of $\left\{ c^{-1}\left( \mathrm{St}_{N\left( \mathcal{U}\right) }\left(
W\right) \right) :W\in \mathcal{U}\right\} $. For $V\in \mathcal{V}$, let $%
\kappa \left( V\right) \in \mathcal{U}$ be such that $c\left( V\right)
\subseteq \mathrm{St}_{N\left( \mathcal{U}\right) }\left( \kappa \left(
V\right) \right) $. For $V\in \mathcal{V}$ and $x\in V$, we have that $%
c\left( x\right) \in \mathrm{St}_{N\left( \mathcal{U}\right) }\left( \kappa
\left( V\right) \right) $, hence $|p|c\left( x\right) \in \mathrm{St}%
_{K}\left( p\kappa \left( V\right) \right) $ and $x\in \kappa \left(
V\right) $. Thus, by hypothesis, we have that $f\left( x\right) \in \mathrm{%
St}_{K}\left( p\kappa \left( V\right) \right) $.

Suppose that $U$ is an open subset of $X$. Then we write $e_{U}$ for the characteristic function of $U$.

Let $\left( \sigma _{V}\right) _{V\in \mathcal{V}}$ be a partition of unity
associated with $\mathcal{V}$. Define the continuous function $%
g:X\rightarrow \left\vert K\right\vert $, $x\mapsto \sum_{V\in \mathcal{V}%
}\sigma _{V}\left( x\right) e_{p\kappa \left( V\right) }$. Notice that $g$
is well-defined. Indeed, if $x\in X$ and $\left\{ V\in \mathcal{V}:\sigma
_{V}\left( x\right) >0\right\} =\left\{ V_{0},\ldots ,V_{\ell }\right\} $,
then we have that, for $i\in \left\{ 0,1,\ldots ,\ell \right\} $, $x\in 
\mathrm{\mathrm{Supp}}\left( \sigma _{V_{i}}\right) \subseteq V_{i}$.\
Hence, $V_{0}\cap \cdots \cap V_{\ell }\neq \varnothing $.\ Thus, we have
that 
\begin{equation*}
\varnothing \neq c\left( V_{0}\cap \cdots \cap V_{\ell }\right) \subseteq
\cap_{i=0}^{\ell }\mathrm{St}_{N\left( \mathcal{U}\right) }\left(
p\kappa \left( V_{i}\right) \right) \text{.}
\end{equation*}%
This implies that%
\begin{equation*}
p\kappa \left( V_{0}\right) \cap \cdots \cap p\kappa \left( V_{\ell }\right)
\neq \varnothing
\end{equation*}%
and hence%
\begin{equation*}
\sum_{V\in \mathcal{V}}\sigma _{V}\left( x\right) e_{p\kappa \left( V\right)
}=\sigma _{V_{0}}\left( x\right) e_{p\kappa \left( V_{0}\right) }+\cdots
+\sigma _{V_{\ell }}\left( x\right) e_{p\kappa \left( V_{\ell }\right) }\in
\left\vert K\right\vert \text{.}
\end{equation*}%
We can define a homotopy from $f$ to $g$ by setting%
\begin{equation*}
H\left( x,t\right) :=\sum_{V\in \mathcal{V}}\sigma _{V}\left( x\right)
\left( tf\left( x\right) +\left( 1-t\right) e_{p\kappa \left( V\right)
}\right)
\end{equation*}%
which is well-defined since $f\left( x\right) \in \mathrm{St}_{K}\left(
p\kappa \left( V\right) \right) $. Analogously one can define a homotopy
from $|p| \circ c$ to $g$, concluding the proof.
\end{proof}

\begin{remark}\label{rmk4.8}
Our interest in the shape category has to do with the fact that a representative of $\mathbf{Sh}(f)$ determines, by definition, a `simplicial approximation' of a map $f : X \rightarrow P$ from a locally compact Polish space to a polyhedron. That is, a factorization
up to homotopy:
$$
\xymatrix
{
|N(\mathcal{U}_{\alpha}^{X}) \ar[r]_{f_{\alpha}} &  P \\
X \ar[ur]_{f} \ar[u]_{c_{\alpha}^{X}}&
}
$$
 
The shape category itself gives a useful way of studying such factorizations, especially as they behave under refinement (see, for example, \cref{lem4.6}).
\end{remark}

To conclude the section, we will briefly study how we can use the shape category and \cref{prop4.7} to provide a relative version of the factorization of \cref{rmk4.8}.

\begin{lemma}\label{lem4.9}
Suppose that $(X, Y)$ is a pair of locally compact Polish spaces and $(P, Q)$ is a pair of polyhedra. Then for each $\alpha \in \mathcal{N}^{*}$ there is a canonical map $c_{\alpha}^{X}$, which induces a map of pairs $(X, Y) \rightarrow (P, Q)$

\end{lemma}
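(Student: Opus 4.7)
The plan is to exploit the compatibility of the covering system $\mathfrak{U}$ on $X$ with its restriction $\mathfrak{V}$ on $Y$ coming from \cref{con2.6}, together with the simplicial approximation result \cref{prop4.7}, to build a factorization of a map of pairs through canonical maps. Since $Y$ is a closed subspace, $\mathcal{U}_\alpha^Y = \mathcal{U}_\alpha^X \upharpoonright Y$, so $N(\mathcal{U}_\alpha^Y)$ embeds as the subcomplex of $N(\mathcal{U}_\alpha^X)$ spanned by those $U \in \mathcal{U}_\alpha^X$ that meet $Y$.

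The first step is to show that a canonical map $c_\alpha^X : X \to |N(\mathcal{U}_\alpha^X)|$ built from a partition of unity $\{\sigma_U\}_{U \in \mathcal{U}_\alpha^X}$ subordinate to $\mathcal{U}_\alpha^X$ automatically sends $Y$ into $|N(\mathcal{U}_\alpha^Y)|$. Indeed, for $y \in Y$, any $U$ with $\sigma_U(y) > 0$ satisfies $y \in U \cap Y$, so $U \in \mathcal{U}_\alpha^Y$, and hence the barycentric coordinates of $c_\alpha^X(y)$ are supported on vertices of $N(\mathcal{U}_\alpha^Y)$. The restriction $c_\alpha^X|_Y$ then satisfies the star condition of a canonical map for $\mathcal{U}_\alpha^Y$, so $c_\alpha^X$ already is a map of pairs $(X, Y) \to (|N(\mathcal{U}_\alpha^X)|, |N(\mathcal{U}_\alpha^Y)|)$.

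Next, given a map of pairs $f : (X, Y) \to (P, Q)$, I would triangulate $P$ by a complex $K$ so that $Q$ is realized by a subcomplex $L \subseteq K$. Using the cofinality of $\{\mathcal{U}_\alpha^X\}_{\alpha \in \mathcal{N}^*}$ in all locally finite open covers of $X$ (see \cref{def2.5}), one can refine $\alpha$ to arrange simultaneously that for every $U \in \mathcal{U}_\alpha^X$ the image $f(U)$ lies in a single open star $\mathrm{St}_K(v_U)$, and that $v_U$ can be chosen in $L$ whenever $U \cap Y \neq \emptyset$ (this is possible because $f(U \cap Y) \subseteq Q$ and the open stars of vertices of $L$ form an open cover of $Q$). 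Then $U \mapsto v_U$ is a simplicial map $p : N(\mathcal{U}_\alpha^X) \to K$ that carries $N(\mathcal{U}_\alpha^Y)$ into $L$, and applying \cref{prop4.7} to $f$ on $X$ and to $f|_Y$ on $Y$ gives $|p| \circ c_\alpha^X \simeq f$ and $|p| \circ c_\alpha^X|_Y \simeq f|_Y$, furnishing the desired homotopy factorization as a map of pairs.

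The main obstacle is producing a single index $\alpha$ for which the star conditions hold simultaneously on $X$ and on $Y$ relative to a common triangulation of $(P, Q)$. This is a matter of combining two cofinality arguments (one for $f$ on $X$, one for $f|_Y$ on $Y$) into a single refinement; it is clean because $\mathcal{N}^*$ is directed under $\leq$ and the covers $\{\mathcal{U}_\alpha^X\}$ are assumed cofinal in all locally finite open covers, but some care is needed to ensure that after refinement the vertex labels $v_U$ can still be chosen so as to define a genuine simplicial map, which follows from compatibility of open stars of $L$ inside $K$.
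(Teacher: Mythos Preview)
Your proposal is correct, but it both takes a different route for the core claim and proves more than \cref{lem4.9} actually requires.

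For the lemma itself, the paper starts from arbitrary canonical maps $c_\alpha^X$ and $c_\alpha^Y$, applies \cref{prop4.7} to show that the square with $i_\alpha : |N(\mathcal{U}_\alpha^Y)| \hookrightarrow |N(\mathcal{U}_\alpha^X)|$ commutes up to homotopy, and then invokes the homotopy extension theorem (\cref{thm2.6}) to modify $c_\alpha^X$ into a map of pairs. Your first paragraph instead constructs $c_\alpha^X$ explicitly from a partition of unity subordinate to $\mathcal{U}_\alpha^X$ and observes that its restriction to $Y$ already lands in $|N(\mathcal{U}_\alpha^Y)|$ and is itself canonical for $\mathcal{U}_\alpha^Y$. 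This is more elementary: it bypasses both \cref{prop4.7} and \cref{thm2.6}, at the harmless cost of committing to one particular canonical map (which is all the lemma asks for, and canonical maps are unique up to homotopy anyway). One might also note that the paper's last step is slightly loose, since a map obtained from a canonical map via homotopy extension need not literally satisfy the star condition; your construction sidesteps this.

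Your second and third paragraphs go beyond \cref{lem4.9} and establish the relative factorization that the paper records separately as \cref{prop4.10}. The argument you sketch is sound: the cover $\{f^{-1}(\mathrm{St}_K(v)) : v \in L^{(0)}\} \cup \{f^{-1}(\mathrm{St}_K(v)) \setminus Y : v \in K^{(0)}\}$ does cover $X$, and refining into it forces $v_U \in L^{(0)}$ whenever $U$ meets $Y$; the simplicial-map condition on $p$ then follows because any point of $U_0 \cap \cdots \cap U_n \cap Y$ maps into $|L|$ and witnesses that $\{v_{U_0},\ldots,v_{U_n}\}$ is a face of a simplex of $L$. This is essentially the content of \cref{prop4.10} unpacked, whereas the paper derives it formally from \cref{lem4.9} and \cref{rmk4.8}.
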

\begin{proof}
We will show the square commutes up to homotopy
$$
\xymatrix
{
X  \ar[r]_{c_{\alpha}^{Y}}  & \, \, |N(\mathcal{U}_{\alpha}^Y)|\\
Y \ar[u]_{i} \ar[r]_{c_{\alpha}^{X}}& \ar[u]_{i_{\alpha}} \, \,  |N(\mathcal{U}_{\alpha}^X)|
}
$$
where $c_{\alpha}^{X}, c_{\alpha}^{Y}$ are canonical maps. 
$$
 c_{\alpha}^{X} \circ i(x) \in  c_{\alpha}^{X}(U) \subseteq St_{N(\mathcal{U}_{\alpha}^{X})}(U)  = St_{N(\mathcal{U}_{\alpha}^{Y})}(i_{\alpha}(U \cap Y))
$$
Thus, we conclude from \cref{prop4.7} that the diagram commutes up to homotopy.

The homotopy extension theorem for Polish spaces (\cref{thm2.6}) implies that we can replace $c_{\alpha}^{X}$ with a canonical map so that the above diagram strictly commutes. Hence the result. 
\end{proof}

The following is immediate from \cref{rmk4.8} and \cref{lem4.9} above:

\begin{proposition}\label{prop4.10}
Suppose that $(f,  g): (X, Y) \rightarrow (P, Q)$ is a map of pairs where the source is a pair of locally compact Polish spaces and the target is a pair of polyhedra.
Then we can factorize it up to homotopy as
$$
\xymatrix
{
(X, Y) \ar[r] \ar[d] & (|K|, |L|) \\
(|N(\mathcal{U}_{\alpha}^{X})|, |N(\mathcal{U}_{\alpha}^{Y})|) \ar[ur]&
}
$$
\end{proposition}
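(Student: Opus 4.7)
The strategy is to build the factorization piece by piece: first the vertical map using Lemma 4.9, then the diagonal map by performing simplicial approximation on $f$ and $g$ separately and reconciling them on a common refinement.

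First, I would invoke Lemma 4.9 to obtain, for any $\alpha \in \mathcal{N}^{*}$, a canonical map of pairs
\[
c_\alpha = (c_\alpha^X, c_\alpha^Y): (X,Y) \longrightarrow (|N(\mathcal{U}_\alpha^X)|,|N(\mathcal{U}_\alpha^Y)|),
\]
so the left vertical arrow of the desired diagram is already in hand (with strict commutativity). Next, I would apply the shape-theoretic factorization recorded in Remark 4.8 (which is the content of Lemma 4.5 combined with Proposition 4.7) to the map $f: X \to |P|$, obtaining an index $\alpha_0$ and a simplicial approximation $f_{\alpha_0}: |N(\mathcal{U}_{\alpha_0}^X)| \to |P|$ with $f_{\alpha_0}\circ c_{\alpha_0}^X \simeq f$. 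I would do the same for $g : Y \to |Q|$, obtaining $\alpha_1$ and $g_{\alpha_1}:|N(\mathcal{U}_{\alpha_1}^Y)| \to |Q|$ with $g_{\alpha_1}\circ c_{\alpha_1}^Y \simeq g$. Choosing $\alpha \geq \alpha_0,\alpha_1$ and composing with the refinement maps, Lemma 4.6(1) lets me assume both approximations are indexed by the same $\alpha$.

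The key reconciliation step is then to compare the two maps $|N(\mathcal{U}_\alpha^Y)| \to |P|$ given by (i) $f_\alpha \circ i_\alpha$, where $i_\alpha: |N(\mathcal{U}_\alpha^Y)| \hookrightarrow |N(\mathcal{U}_\alpha^X)|$, and (ii) the composition $|N(\mathcal{U}_\alpha^Y)| \xrightarrow{g_\alpha} |Q| \hookrightarrow |P|$. Both become homotopic to $f \circ i$ after precomposition with $c_\alpha^Y$, by Lemma 4.9 and the previous paragraph. Hence, by Lemma 4.6(2), after passing to some $\gamma \geq \alpha$ the two restrictions $f_\alpha \circ i_\alpha \circ r_\alpha^\gamma$ and $g_\alpha \circ r_\alpha^\gamma$ are actually homotopic as maps $|N(\mathcal{U}_\gamma^Y)| \to |P|$.

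At this point the only remaining issue is to promote a homotopy commutativity into a strict one at the level of pairs: I want a map $|N(\mathcal{U}_\gamma^X)| \to |P|$ whose restriction to $|N(\mathcal{U}_\gamma^Y)|$ literally factors through $|Q|$. Since $(|N(\mathcal{U}_\gamma^X)|,|N(\mathcal{U}_\gamma^Y)|)$ is a CW pair, it has the homotopy extension property; applying it to the homotopy between $f_\alpha \circ r_\alpha^\gamma|_{|N(\mathcal{U}_\gamma^Y)|}$ and $g_\alpha \circ r_\alpha^\gamma$ produces a map $\tilde f : |N(\mathcal{U}_\gamma^X)| \to |P|$ homotopic to $f_\alpha \circ r_\alpha^\gamma$ whose restriction equals $g_\alpha \circ r_\alpha^\gamma$ and thus lands in $|Q|$. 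The pair $(\tilde f, g_\alpha \circ r_\alpha^\gamma)$ together with $c_\gamma = (c_\gamma^X, c_\gamma^Y)$ from Lemma 4.9 yields the desired factorization up to homotopy of pairs.

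The main obstacle, as indicated, is this last reconciliation: simplicial approximations of $f$ and $g$ are produced independently and only cohere after refining to a sufficiently fine cover and invoking HEP for CW pairs. Everything else is bookkeeping with the cofinality of $\mathcal{N}^{*}$ and Lemma 4.6.
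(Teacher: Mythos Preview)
Your proof is correct and is precisely the detailed argument that the paper's one-line proof (``immediate from Remark~4.8 and Lemma~4.9'') would require if unpacked; the paper supplies no further detail, so your reconciliation of the two independent simplicial approximations via Lemma~4.6(2) followed by the homotopy extension property for the CW pair $(|N(\mathcal{U}_\gamma^X)|,|N(\mathcal{U}_\gamma^Y)|)$ is exactly the natural way to fill it in.

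One minor caveat worth flagging: your final sentence asserts a factorization ``up to homotopy of pairs,'' but the homotopies you have actually produced are an absolute homotopy $\tilde f\circ c_\gamma^X\simeq f$ on $X$ together with a separate homotopy $g_\alpha\circ r_\alpha^\gamma\circ c_\gamma^Y\simeq g$ taking place in $|Q|$. These need not assemble into a single homotopy through maps of pairs $(X,Y)\to(|P|,|Q|)$. The proposition as stated only says ``factorize up to homotopy,'' so what you have written is adequate for it; but the paper later (Theorem~5.9) invokes the stronger conclusion of a homotopy of pairs, and one more application of the HEP---this time Theorem~2.6 for the Polish pair $(X,Y)$, extending the $|Q|$-valued homotopy on $Y$ over all of $X$---is needed to obtain it. The paper's own proof is equally silent on this point.
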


\section{The Main Theorem of Definable Obstruction Theory}\label{sec5}
The purpose of this section is to prove a generalization of \cref{thm3.8} in the setting definable algebraic topology. The main result is the following:

\begin{theorem}\label{thm5.1}
Suppose that $(X, Y)$ is a pair of locally compact Polish spaces, and $K$ is a locally finite, countable simplicial complex, satisfying the following conditions:
\begin{enumerate}
\item{$|K|$ is $n-1$-connected}
\item{$X$ has finite covering dimension}
\item{$\check{H}^{k+1}(X, Y; \pi_{k}|K|) = 0= \check{H}^{k}(X, Y; \pi_{k}|K|)$ for all $k \ge n+1$}
\end{enumerate}
Then there is a Borel map $\Theta_{X, Y} : C((X, Y), (|K|, *)) \rightarrow C^{n}(X, Y; \pi_{n}|K|)$ which induces a bijection
$$
[(X, Y), (|K|, *)] \cong \check{H}^{n}(X, Y; \pi_{n}|K|).
$$
Moreover, this bijection is natural in pairs $(X, Y)$ satisfying the above conditions. 
\end{theorem}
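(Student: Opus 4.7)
The plan is to reduce the theorem to the classical obstruction theory of \cref{thm3.8} by first approximating each map $f \in C((X, Y), (|K|, *))$ via the shape-theoretic factorizations of \cref{sec4}, and then running the construction of $\chi^{n}$ from \cref{exam3.8} on the resulting polyhedral pair. Concretely, \cref{prop4.10} together with the Borel homotopy extension of \cref{thm2.6} produces, uniformly Borel in $f$, an index $\alpha \in \mathcal{N}^{*}$ and a continuous map of pairs $f_{\alpha}: (|N(\mathcal{U}_{\alpha}^{X})|, |N(\mathcal{U}_{\alpha}^{Y})|) \to (|K|, *)$ with $f_{\alpha} \circ (c_{\alpha}^{X}, c_{\alpha}^{Y}) \simeq (f, f|_{Y})$. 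Since $|K|$ is $(n-1)$-connected, a further Borel application of \cref{thm2.6} arranges $f_{\alpha}$ to be constant on the $(n-1)$-skeleton via a null-homotopy $h_{f}$, and we set
\[
\Theta_{X,Y}(f) := d^{n}(f_{\alpha}, 0; h_{f}) \in \mathcal{C}^{n}(\mathfrak{U}, \mathfrak{V}; \pi_{n}|K|),
\]
identifying the right-hand side with the ambient \v Cech cochains via \cref{con2.6}. The resulting class $\chi^{n}(f_{\alpha})$ is independent of the approximation under the colimit map, by \cref{lem4.6}(2) and the naturality statement of \cref{chilemma}(2), and so descends to a well-defined element of $\check{H}^{n}(X, Y; \pi_{n}|K|)$ depending only on $[f]$.

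The principal technical input is that, since $X$ has finite covering dimension $d$, the nerves $N(\mathcal{U}_{\alpha}^{X})$ may be taken of dimension at most $d$ cofinally in $\mathcal{N}^{*}$, cutting the obstruction tower off at finite height. For surjectivity, represent a given class by $\tilde{\alpha} \in \mathcal{Z}^{n}(N(\mathcal{U}_{\beta}^{X}), N(\mathcal{U}_{\beta}^{Y}); \pi_{n}|K|)$ on such a nerve, use \cref{lem3.5}(3) to realize it as $d^{n}(g, 0)$ for some $g$ defined on the $n$-skeleton, and extend inductively: at each dimension $k \ge n$ the obstruction $[c^{k+1}(g)] \in H^{k+1}(N(\mathcal{U}^{X}), N(\mathcal{U}^{Y}); \pi_{k}|K|)$ dies in $\check{H}^{k+1}(X, Y; \pi_{k}|K|) = 0$, so pulling back along a sufficiently fine refinement makes it a coboundary, and \cref{lem3.6} (combined with \cref{lem3.5}(3) to adjust on $k$-cells) furnishes the extension to the $(k+1)$-skeleton of the refined nerve. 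This terminates after at most $d - n$ steps, and composing with the canonical map gives $f$ with $\Theta_{X, Y}(f)$ representing the original class. For injectivity, if $\Theta_{X,Y}(f)$ and $\Theta_{X,Y}(g)$ are cohomologous, pass to a common refinement on which $\chi^{n}(f_{\gamma}) = \chi^{n}(g_{\gamma})$ and apply \cref{lem3.7} to obtain a relative homotopy on the $n$-skeleton. Obstructions to extending this homotopy to higher skeleta lie in $H^{m+1}(N(\mathcal{U}^{X}), N(\mathcal{U}^{Y}); \pi_{m}|K|)$ for $m \ge n+1$ and once more vanish in the \v Cech colimit by hypothesis (3), so iterating the refinement procedure (again bounded by finite dimensionality) yields $f_{\delta} \simeq g_{\delta}$ and hence $f \simeq g$.

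Borel measurability of $\Theta_{X,Y}$ is inherited from the uniformly Borel character of each of the finitely many choices made above---simplicial approximation, null-homotopy, skeletal extension, and passage to a refinement witnessing the vanishing of a named cohomology class---all of which reduce ultimately to \cref{thm2.6}. Naturality in $(X, Y)$ follows from the functoriality of the shape assignment $X \mapsto \mathbf{SH}(X)$ from \cref{thm4.3} combined with the naturality of $\chi^{n}$ in \cref{chilemma}(2). The main obstacle I anticipate is organizing the two layers of induction coherently: because the obstruction classes only vanish in the \v Cech colimit, each skeletal step forces passage to a finer cover, and one must simultaneously verify that these refinements can be chosen Borel-measurably in $f$ and that the entire process terminates---both of which rest critically on the finite-covering-dimension hypothesis.
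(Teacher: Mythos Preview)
Your overall strategy matches the paper's exactly: factor through the nerve via shape theory, define $\Theta_{X,Y}$ via the deformation cochain $d^{n}(f_{\alpha},0;h)$, and prove bijectivity by running the skeletal obstruction arguments of \cref{sec3} while passing to refinements whenever an obstruction class needs to die in the \v{C}ech colimit (the paper packages these two inductions as \cref{lem5.11} and \cref{lem5.12}, using finite covering dimension to terminate just as you describe).

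The one place your sketch is materially looser than the paper is the Borel measurability of $\Theta_{X,Y}$. You assert that the null-homotopy $h_{f}$ and the simplicial approximation can be selected Borel in $f$ via \cref{thm2.6}, but \cref{thm2.6} is a homotopy \emph{extension} theorem and does not by itself produce a null-homotopy of $f_{\alpha}|_{|N(\mathcal{U}_{\alpha}^{X})|^{n-1}}$ in a Borel fashion, nor does it yield simplicial approximation. The paper handles both points differently: it routes $\Theta_{X}$ through an intermediate Polish space $SA(\mathfrak{U},|K|)$ of simplicial approximations (\cref{def5.2}, \cref{lem5.3}), invokes the definable simplicial approximation theorem of \cite[Lemma~5.5]{bergfalk_definable_2024} for the Borel map $\Phi_{X}$ into it, and---crucially---obtains the null-homotopy by fixing once and for all a homotopy $H:\mathrm{id}_{|K|}\simeq Q_{K}$ with $Q_{K}|_{|K|^{n-1}}=0$ on the \emph{target} side and precomposing, so that $f\mapsto d^{n}(|f|,0;H\circ|f|)$ is in fact \emph{continuous} (\cref{lem5.5}). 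Note also that the ``skeletal extension'' and ``passage to a refinement'' steps you list are used only in the bijectivity proof, not in the definition of $\Theta_{X,Y}$, so their Borel character is irrelevant to the measurability of $\Theta_{X,Y}$ itself; the obstacle you anticipate (coherently organizing the refinements) is therefore not where the real work lies.
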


The bijection will be implemented as follows: we will find a factorization
$$
\xymatrix
{
(X, Y) \ar[d]_{(c_{\alpha}^{X}, c_{\alpha}^{Y})} \ar[r]_{f} & (|K|, *)\\
 (|N(\mathcal{U}_{\alpha}^{X})|, |N(\mathcal{U}_{\alpha}^{Y})|) \ar[ur]_{f_{\alpha}} & 
}
$$
of $f$ up to homotopy, and associate to it the deformation obstruction $d^{n}(f_{\alpha}, 0; h_{f})$. Moreover, we will show that these two operations can be done in a Borel fashion. 
\\

Throughout this section, we will often assume that a locally compact Polish space $X$ is equipped with a covering system $\mathfrak{U}$ in the sense of \ref{def2.5}. Given a closed subspace $Y$ of $X$, we will also note that there is an induced covering system $\mathfrak{V}$ on $Y$. 

We begin by defining a variant of a definition from \cite[Section 5.2]{bergfalk_definable_2024}:

\begin{definition}\label{def5.2}
 Let $S$ be the set of elements which appear in some $\mathcal{U}_{\alpha}^{X}$. Then we write $$SA_{\mathfrak{U}}(X, |K|) \subseteq \mathcal{N}^{*} \times dom(K)^{S},$$ for the set consisting of pairs $(\alpha, p)$ such that $p|_{\alpha} : N(\mathcal{U}_{\alpha}^{X}) \rightarrow K$ is a simplicial complex map and $p(U) = 0$ if $U \not\in \mathcal{U}_{\alpha}$.

We also write  $$SA_{\mathfrak{U}}^{n}(X, |K|) \subseteq \mathcal{N}^{*} \times dom(K)^{S},$$ for the set consisting of pairs $(\alpha, p)$ such that $p|_{\alpha} : N(U_{\alpha}^{X})^{n} \rightarrow K^{n}$ is a simplicial complex map and $p(U) = 0$ if $U \not\in \mathcal{U}_{\alpha}$.

We call these, respectively, the spaces of \emph{simplicial approximation} and \emph{n-skeletal approximations} of $|K|$ by $X$. 
\end{definition}

 \begin{lemma}\label{lem5.3}
 Both  $SA_{\mathfrak{U}}^{n}(X, |K|)$ and $SA_{\mathfrak{U}}(X, |K|)$ are closed subspaces of $\mathcal{N}^{*} \times dom(K)^{S}$. In particular, they are Polish spaces. 
 \end{lemma}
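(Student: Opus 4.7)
The plan is to realize each of $SA_{\mathfrak{U}}(X, |K|)$ and $SA_{\mathfrak{U}}^{n}(X, |K|)$ as a countable intersection of clopen subsets of the ambient product $\mathcal{N}^{*} \times \mathrm{dom}(K)^{S}$. That the ambient space is Polish is straightforward: $\mathcal{N}^{*}$ is a closed subspace of the Baire space, and both $\mathrm{dom}(K)$ and $S$ are countable (the former because $K$ is a countable simplicial complex, the latter by the locally finite / countable nature of the covers in a covering system as in \cref{def2.5}), so $\mathrm{dom}(K)^{S}$ is a countable product of discrete countable spaces. Hence it suffices to show that both subsets are closed.

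I would unpack the defining conditions of $SA_{\mathfrak{U}}(X, |K|)$ into two families of elementary clauses. For each nonempty finite $\sigma \subseteq S$ set
\[
A_{\sigma} := \bigl\{ (\alpha, p) : \sigma \notin N(\mathcal{U}_{\alpha}^{X}) \ \text{or}\ p(\sigma) \in K \bigr\},
\]
and for each $U \in S$ set
\[
B_{U} := \bigl\{ (\alpha, p) : U \in \mathcal{U}_{\alpha}^{X} \ \text{or}\ p(U) = 0 \bigr\}.
\]
Then $SA_{\mathfrak{U}}(X, |K|) = \bigcap_{\sigma} A_{\sigma} \,\cap\, \bigcap_{U \in S} B_{U}$, and $SA_{\mathfrak{U}}^{n}(X, |K|)$ is obtained from the same formula by restricting the first intersection to $|\sigma| \le n+1$. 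Both are countable intersections.

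Each clause involves only finitely many coordinates of $p \in \mathrm{dom}(K)^{S}$, a product of discrete spaces, so the $p$-part of each clause is automatically clopen; moreover ``being a simplex of $K$'' is decidable on finite tuples since $K$ is given combinatorially. The crux is to verify that the $\alpha$-parts ``$\sigma \in N(\mathcal{U}_{\alpha}^{X})$'' and ``$U \in \mathcal{U}_{\alpha}^{X}$'' are clopen conditions on $\alpha \in \mathcal{N}^{*}$, that is, they depend only on some finite initial segment $\alpha | k$. This is precisely the continuous-dependence property encoded in the covering system axioms of \cref{def2.5} (\cite[Definition 2.3]{bergfalk_definable_2024}) -- the very property which underlies the Polish structure of the cochain spaces in \cref{con2.6}.

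The main obstacle is therefore the bookkeeping required to extract this clopen dependence directly from the covering system axioms; once this is in hand, both $SA_{\mathfrak{U}}(X, |K|)$ and $SA_{\mathfrak{U}}^{n}(X, |K|)$ are exhibited as countable intersections of clopen subsets of a Polish space, hence closed and themselves Polish, as claimed.
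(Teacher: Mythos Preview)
Your argument is correct. Both proofs hinge on the same underlying fact---that membership of a finite tuple $\sigma \subseteq S$ in $N(\mathcal{U}_{\alpha}^{X})$ (and of a single $U \in S$ in $\mathcal{U}_{\alpha}^{X}$) is determined by a finite initial segment of $\alpha$, which is exactly what the covering system axioms guarantee---but they package this differently. The paper chooses a metric on $\mathcal{N}^{*} \times \mathrm{dom}(K)^{S}$ and runs a sequential argument: given a Cauchy sequence $(\alpha^{m}, f^{m})$ in $SA_{\mathfrak{U}}^{n}(X,|K|)$ converging to $(\alpha,f)$, it fixes a simplex $\sigma$ of $N(\mathcal{U}_{\alpha}^{X})$, finds $N$ large enough that $f^{N}$ agrees with $f$ on the vertices of $\sigma$, and concludes from $f^{N}$ being simplicial that $f(\sigma)$ is a simplex of $K$. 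Your proof instead exhibits the set directly as a countable intersection of clopen subsets $A_{\sigma}$ and $B_{U}$, making the finite-dependence property explicit in the statement rather than buried in the limit argument. Your route is slightly more transparent about where the covering system axioms enter (you flag it as ``the crux''), whereas the paper's sequential proof leaves implicit the step that $\sigma \in N(\mathcal{U}_{\alpha}^{X})$ forces $\sigma \in N(\mathcal{U}_{\alpha^{N}}^{X})$ for large $N$. Either approach is fine; yours has the minor advantage of yielding a bit more (the sets are $G_{\delta}$ with clopen pieces, not merely closed).
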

\begin{proof}
We will prove the statement for  $SA_{\mathfrak{U}}^{n}(X, |K|)$.  The proof for $SA_{\mathfrak{U}}(X, |K|)$ is identical.

Recall that a discrete space $D$ can be endowed with the discrete metric given by
$$
d_{D}(x, y) =
\left\{
	\begin{array}{ll}
		0  & \mbox{if } x = y \\
		1 & \mbox{if } x \neq y
	\end{array}
\right.
$$
Since $S$ is a countable set by construction (see \cite[Proposition 2.7]{bergfalk_definable_2024}), we can then define a metric on $\mathcal{N}^{*} \times dom(K)^{S}$ via the formula
$$
d((\alpha, f), (\beta, g)) = \sum_{i=1}^{\infty} \frac{d_{\mathbb{N}}(\alpha_{i}, \beta_{i}) + d_{dom(K)}(f_{i}, g_{i})}{2^{i}} 
$$
Suppose that we choose a Cauchy sequence $(\alpha^{n}, f^{n})$ in $SA_{\mathfrak{U}}^{n}(X, |K|)$, which converges to $(\alpha, f)$ in $\mathcal{N}^{*} \times dom(K)^{S}$. We want to show that $f$ takes m-simplices of $N(\mathcal{U}_{\alpha}^{X})^{n}$ to m-simplices of $K$.

Choose an $m$-simplex $\sigma$ of $N(\mathcal{U}_{\alpha}^{X})^{n}$. Let $\{ s_{i} \}$ be an enumeration of $S$. Choose an integer $N$, such that
\begin{enumerate}
\item{$\{ s_{i} : i \le N \}$ contains each vertex of $\sigma$}
\item{ $d((\alpha^{N}, f^{N}), (\alpha, f)) < \frac{1}{2^{N}}$}
\end{enumerate}
 Then we have that $f^{N}(s_{i}) = f(s_{i})$, for each vertex $s_{i}$ of $\sigma$. Since $f^{N}$ is a simplicial map, we conclude that $f(\sigma)$ spans an $m$-simplex of $K$. Hence the result.  

\end{proof}

\begin{theorem}\label{thm5.4}
There is a Borel function $\Phi_{X} : C(X, |K|) \rightarrow SA(X, |K|)$, such that for each choice of canonical map 
$$
\xymatrix
{
X \ar[r]^{f} \ar[d]_{c_{\alpha}^{X}} & |K|  \\
|N(\mathcal{U}^{X}_{\alpha})| \ar[ur]_{|\Phi_{X}(f)|}& 
}
$$
is homotopy commutative.
\end{theorem}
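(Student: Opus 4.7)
The plan is to build a simplicial approximation using the star cover of $|K|$, following the strategy of \cref{prop4.7}. Fix once and for all an enumeration of $\mathrm{dom}(K)$. Given $f \in C(X,|K|)$, the open stars $\{\mathrm{St}_v(K)\}_{v \in \mathrm{dom}(K)}$ cover $|K|$, so $\{f^{-1}(\mathrm{St}_v(K))\}_v$ is an open cover of $X$. Since the covering system $\mathfrak{U}$ is cofinal in $\mathrm{Cov}(X)$, some $\mathcal{U}_\alpha^X$ refines this preimage cover, meaning that every $U\in\mathcal{U}_\alpha^X$ satisfies $f(U)\subseteq\mathrm{St}_v(K)$ for some $v$. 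For any such $\alpha$, define $p_\alpha(f) : S \to \mathrm{dom}(K)$ by sending $U\in\mathcal{U}_\alpha^X$ to the least $v$ (in the fixed enumeration) with $f(U)\subseteq\mathrm{St}_v(K)$, and sending all other elements of $S$ to $0$.

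The map $p_\alpha(f)$ restricts to a simplicial map $N(\mathcal{U}_\alpha^X)\to K$: if $\{U_0,\ldots,U_m\}$ is an $m$-simplex of the nerve then $U_0\cap\cdots\cap U_m\neq\varnothing$, so any $x$ in the intersection satisfies $f(x)\in\bigcap_i\mathrm{St}_{p_\alpha(f)(U_i)}(K)$. Since a point of $|K|$ lies in the open star of a vertex if and only if that vertex belongs to the (unique) simplex whose relative interior contains the point, $\{p_\alpha(f)(U_i)\}_i$ spans a face of that simplex. By construction $p_\alpha(f)$ satisfies the hypothesis of \cref{prop4.7}, so $|p_\alpha(f)|\circ c_\alpha^X\simeq f$ for every canonical map $c_\alpha^X$.

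It remains to make the assignment $f\mapsto(\alpha(f),p_{\alpha(f)}(f))$ Borel. For each fixed $\alpha$, let $A_\alpha\subseteq C(X,|K|)$ denote the set of $f$ for which the construction above succeeds. Writing each open $U\in\mathcal{U}_\alpha^X$ as a countable union of compacta $U=\bigcup_n K_n^U$, the condition $f(U)\subseteq\mathrm{St}_v(K)$ equals $\bigcap_n\{f:f(K_n^U)\subseteq\mathrm{St}_v(K)\}$, a countable intersection of basic open sets in the compact-open topology. Hence $A_\alpha$ is $\mathbf{\Pi}^0_2$, the graph $R=\{(f,\alpha):f\in A_\alpha\}$ is Borel, and by the first paragraph its projection onto $C(X,|K|)$ is all of $C(X,|K|)$.

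The main obstacle is producing a Borel selector $f\mapsto\alpha(f)$ uniformizing $R$. The argument uses two features of the setup. First, $R$ is upward-closed in $\alpha$: if $\alpha\le\beta$ and $f\in A_\alpha$, then for $V\in\mathcal{U}_\beta^X$ the inclusion $V\subseteq r_\alpha^\beta(V)\in\mathcal{U}_\alpha^X$ gives $f(V)\subseteq f(r_\alpha^\beta(V))\subseteq\mathrm{St}_v(K)$, transferring a witness from $\alpha$ to $\beta$. Second, the covering data is organized by the exhaustion $X_0\subseteq X_1\subseteq\cdots$, so membership in $A_\alpha$ admits finite-stage approximations along the $X_k$. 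Combining these, one defines $\alpha(f)$ coordinate-by-coordinate by taking the least integer extension of the previously chosen values for which a working $\alpha$ still exists; upward-closedness together with the exhaustion reduces the extendibility test at each stage from an \emph{a priori} analytic condition to a Borel one. Once $\alpha(f)$ is Borel, $p_{\alpha(f)}(f)(U)$ is the least $v\in\mathrm{dom}(K)$ with $f(U)\subseteq\mathrm{St}_v(K)$, which is Borel in $f$. Setting $\Phi_X(f):=(\alpha(f),p_{\alpha(f)}(f))$ then yields a Borel function valued in $SA_{\mathfrak{U}}(X,|K|)$ with the required homotopy-commutativity property.
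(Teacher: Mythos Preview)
Your approach is exactly the paper's: build the simplicial approximation from the open-star cover of $|K|$ and then invoke \cref{prop4.7} for the homotopy commutativity. The paper's proof is in fact a two-line citation---it takes $\Phi_X$ to be the Borel map supplied by the definable simplicial approximation theorem \cite[Lemma 5.5]{bergfalk_definable_2024} and observes that this map satisfies the hypothesis of \cref{prop4.7} by construction. You are essentially reproving that cited lemma.

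The one place your argument is thin is the Borel uniformization of $R=\{(f,\alpha):f\in A_\alpha\}$. You correctly identify upward-closedness in $\alpha$ and the exhaustion $(X_k)$ as the relevant ingredients, but the sentence ``upward-closedness together with the exhaustion reduces the extendibility test at each stage from an \emph{a priori} analytic condition to a Borel one'' does not by itself close the gap. What makes this work in \cite{bergfalk_definable_2024} is a specific axiom of covering systems (referenced in \cref{def2.5}): the restriction $\mathcal{U}_\alpha^X\upharpoonright X_k$ is a \emph{finite} cover depending only on the finite tuple $\alpha|k$. With this in hand, the stage-$k$ test ``some $\alpha$ extending $s$ lies in $A_\alpha$'' collapses to a condition on a finite cover of the compactum $X_k$, which is open in $f$ and decidable from $s$ alone; the coordinate-by-coordinate least-extension selector is then visibly Borel. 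Without appealing to that axiom, your recursive test is still $\Sigma^1_1$ as written. This is not a fatal error---you have named the right mechanism---but to make the argument self-contained you must invoke that structural property of $\mathfrak{U}$ explicitly. (A minor point: your complexity bound $A_\alpha\in\mathbf{\Pi}^0_2$ undercounts the quantifiers, though ``Borel'' is all that is needed.)
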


\begin{proof}
Take $\Phi_{X}$ to be the Borel map in the definable simplicial approximation theorem from \cite[Lemma 5.5]{bergfalk_definable_2024}. Indeed, by construction $\Phi_{X}(f)$ satisfies the hypotheses of \cref{prop4.7}, hence the result. 
\end{proof}

\begin{lemma}\label{lem5.5}
Suppose that $X, K$ satisfy the hypotheses of \cref{thm5.1}. Then there is a continuous map:
 $\Psi_{n} : SA^{n}(X, |K|) \rightarrow \mathcal{C}^{n}(\mathfrak{U}, \pi_{n}|K|)$ which takes
$$
f \mapsto d^{n}(|f|, 0; h_{|f|})
$$
\end{lemma}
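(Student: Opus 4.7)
The plan is to fix a canonical null-homotopy of $|K|^{n-1} \hookrightarrow |K|$, use it to build the deformation cochain simplex-by-simplex, and then verify continuity by exploiting the fact that each cochain value is a local function of $p$. Since $|K|$ is $(n-1)$-connected and $|K|^{n-1}$ is an $(n-1)$-dimensional CW complex, an elementary cell-by-cell obstruction argument (building a null-homotopy one cell at a time, with each obstruction lying in a vanishing $\pi_m(|K|)$) shows that the inclusion $\iota : |K|^{n-1} \hookrightarrow |K|$ is null-homotopic. Fix once and for all a homotopy $H_0 : |K|^{n-1} \times [0,1] \to |K|$ with $H_0(-, 0) = \iota$ and $H_0(-, 1) = *$.

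Given $(\alpha, p) \in SA^n(X, |K|)$, the simplicial map $|p|$ restricts on the $(n-1)$-skeleton to a map into $|K|^{n-1}$, and composition with $H_0$ yields a null-homotopy $h_{|p|} := H_0 \circ (|p| \times \mathrm{id})$ of $|p||_{(n-1)\text{-skel}}$ which depends continuously on $p$. For each $n$-simplex $\sigma \in N(\mathcal{U}_\alpha^X)^{(n)}$, the data $(h_{|p|}, (|p|, 0))$ assembles via \cref{con3.4} into a map $k_\sigma : S^n \to |K|$, and we set $\eta_{\alpha, p}(\sigma) := [k_\sigma] \in \pi_n|K|$ (extending by zero on simplices outside $N(\mathcal{U}_\alpha^X)^{(n)}$). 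By construction the resulting cochain $\eta_{\alpha, p} \in \mathcal{C}^n(N(\mathcal{U}_\alpha^X); \pi_n|K|) \subseteq \mathcal{C}^n_{\mathrm{sem}}(\mathfrak{U}; \pi_n|K|)$ represents $d^n(|p|, 0; h_{|p|})$, and we let $\Psi_n(\alpha, p)$ be its class in the quotient $\mathcal{C}^n(\mathfrak{U}; \pi_n|K|)$.

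The main obstacle is verifying continuity, which requires reconciling the finitary/combinatorial nature of the simplicial data with the refinement-based metric $\rho$ of \eqref{eqmetric}. The key observation is that $\eta_{\alpha, p}(\sigma)$ depends only on the values of $p$ on the finitely many vertices of $\sigma$: both $|p||_\sigma$ and $h_{|p|}|_{\partial \sigma \times [0,1]} = H_0(|p|(-), -)$ are determined by these values together with the fixed datum $H_0$. Suppose $(\alpha^m, p^m) \to (\alpha, p)$ in $SA^n(X, |K|)$ with respect to the metric used in \cref{lem5.3}. For each fixed $k$, eventually $\alpha^m|k = \alpha|k$ and $p^m$ agrees with $p$ on every vertex appearing in $N(\mathcal{U}_{\alpha|k}^X)$; the coherence axioms of a covering system (\cref{def2.5}) then force $\mathcal{U}_{\alpha^m | k}^X = \mathcal{U}_{\alpha | k}^X$ and ensure $\eta_{\alpha^m, p^m}$ and $\eta_{\alpha, p}$ agree, after restriction along the refinement maps $r^{\alpha^m \vee \alpha}_{\alpha^m}$, $r^{\alpha^m \vee \alpha}_{\alpha}$, on all $n$-simplices of $N(\mathcal{U}_{\alpha|k}^X)$. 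Hence $\delta_k = 0$ in \eqref{eqmetric} for all sufficiently large $m$, so $\rho(\Psi_n(\alpha^m, p^m), \Psi_n(\alpha, p)) \to 0$, which completes the proof.
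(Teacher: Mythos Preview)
Your proof is correct and follows essentially the same strategy as the paper's: fix once and for all a null-homotopy coming from the $(n-1)$-connectedness of $|K|$ (the paper uses a homotopy $H:\mathrm{id}_{|K|}\Rightarrow Q_K$ with $Q_K|_{|K|^{n-1}}=0$, which is equivalent to your $H_0$), observe that the resulting deformation cochain value on an $n$-simplex $\sigma$ depends only on the image simplex $p(\sigma)$, and deduce continuity from this locality together with the finiteness of each $N(\mathcal{U}_{\alpha|k}^X)$. The only stylistic difference is that the paper verifies continuity by explicitly computing the preimage of a basic open set $V_{s,p}$, whereas you run the same locality argument through sequential continuity; since all spaces involved are metric this is equivalent.
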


\begin{proof}

Since $K$ is $n-1$-connected, we can choose a homotopy $H : id_{|K|} \rightarrow Q_{K}$, where $Q_{K}|_{|K|^{n-1}} = 0$. We claim that the assignment 
$$
f \mapsto d^{n}(|f|, 0; H \circ |f|)
$$
is continuous.

For each $s \in  \mathbb{N}^{<\mathbb{N}}$, $p : N(\mathcal{U}_{\beta})^{(n)} \rightarrow \pi_{n}(|K|)$, we define $V_{s, p}$ to be the set of all 
elements $\zeta : N(\mathcal{U}_{\alpha}^{X})^{(n)} \rightarrow \pi_{n}(K)$ of $\mathcal{C}^{n}(\mathfrak{U}, \pi_{n}|K|)$ satisfying the following two conditions
\begin{enumerate}
\item{$\alpha \in \mathcal{N}^{*}_{s}$}
\item{$\zeta |_{N(\mathcal{U}_{\alpha|s}^{X})} = p|_{N(\mathcal{U}_{\beta | s}^{X})}$}
\end{enumerate}

As noted in \cite[Section 2.4]{bergfalk_definable_2024}, the set of all $V_{s, p}$ forms a basis of the topology on $\mathcal{C}^{n}(\mathfrak{U}; \pi_{n}|K|)$, so it suffices to show that the inverse image of each $V_{s, p}$ under $\Psi_{n}$ is open. But we have
$$
\Psi_{n}^{-1}(V_{s, p}) = \left (
\mathcal{N}^{*}_{s} \times \bigcap_{\sigma \in N(\mathcal{U}_{\alpha|s}^{X})^{(n)}} \bigcup_{Q_{K}(|\sigma'|) \simeq Q_{K}(|p(\sigma)|)} \{ f \in \dom(K)^{S} : f(\sigma) = \sigma' \}   \right )   \cap SA^{n}(X, |K|)
$$
The set $ \{ f \in \dom(K)^{S} : f(\sigma) = \sigma' \}$ is open in $\dom(K)^{S}$ for fixed choice of $n$-simplices $\sigma, \sigma'$ and 
by the construction of \cite[Proposition 2.7]{bergfalk_definable_2024}, $N(\mathcal{U}_{\alpha|s}^{X})^{(n)}$ is finite, so the result follows.

\end{proof}

\begin{lemma}\label{lem5.6}
Any refinement map $r^{\beta}_{\alpha} : |N(\mathcal{U}_{\beta}^{X})| \rightarrow  |N(\mathcal{U}_{\alpha}^{X})|$ is both cellular and proper.
\end{lemma}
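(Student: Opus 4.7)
The plan is to treat the two assertions separately; cellularity is essentially formal from the fact that $r^{\beta}_{\alpha}$ is simplicial, while properness is where the covering-system axioms really have to be invoked. Throughout I would identify $r^{\beta}_{\alpha}$ with the induced simplicial map of nerves $N(r^{\beta}_{\alpha}) \colon N(\mathcal{U}_{\beta}^{X}) \to N(\mathcal{U}_{\alpha}^{X})$ and work with its geometric realization under the standard CW structure on a geometric realization (one $k$-cell per $k$-simplex).

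For cellularity, the map $|r^{\beta}_{\alpha}|$ is affine on each closed simplex of $|N(\mathcal{U}_{\beta}^{X})|$ and carries the vertices of a $k$-simplex $\sigma$ into the (possibly repeated) vertices of a simplex of dimension at most $k$ in $N(\mathcal{U}_{\alpha}^{X})$. Hence $|r^{\beta}_{\alpha}|$ sends the $k$-skeleton into the $k$-skeleton for every $k$, so it is cellular.

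For properness, I would first reduce to the finite-subcomplex criterion: because $\mathcal{U}_{\alpha}^{X}$ and $\mathcal{U}_{\beta}^{X}$ are locally finite, both nerves are locally finite simplicial complexes, so each compact subset of $|N(\mathcal{U}_{\alpha}^{X})|$ is contained in $|L|$ for some finite subcomplex $L \subseteq N(\mathcal{U}_{\alpha}^{X})$, and it suffices to show that $|r^{\beta}_{\alpha}|^{-1}(|L|)$ is contained in the realization of a finite subcomplex of $N(\mathcal{U}_{\beta}^{X})$. Since $r^{\beta}_{\alpha}$ is simplicial and $L$ is a subcomplex, this preimage is contained in $|M|$, where $M$ is the full subcomplex on the vertex set $\{U \in \mathcal{U}_{\beta}^{X} : r^{\beta}_{\alpha}(U) \in L^{(0)}\}$. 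Using the defining inequality $U \subseteq r^{\beta}_{\alpha}(U)$ for a refinement map, together with the axioms of a covering system from \cite[Definition 2.3]{bergfalk_definable_2024} which ensure that each element of $\mathcal{U}_{\alpha}^{X}$ is relatively compact, the set $K := \bigcup_{V \in L^{(0)}} \overline{V}$ is compact in $X$. Any $U \in \mathcal{U}_{\beta}^{X}$ with $r^{\beta}_{\alpha}(U) \in L^{(0)}$ meets $K$, so local finiteness of $\mathcal{U}_{\beta}^{X}$ forces $M^{(0)}$ to be finite and $M$ to be a finite subcomplex.

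The main potential pitfall will be the properness step, where the argument collapses unless one can guarantee that the members of $\mathcal{U}_{\alpha}^{X}$ are relatively compact (and that $\mathcal{U}_{\beta}^{X}\upharpoonright K$ is finite for each compact $K$); in principle one could imagine refinement maps in which a single vertex of $N(\mathcal{U}_{\alpha}^{X})$ has infinitely many preimages in $N(\mathcal{U}_{\beta}^{X})$. It is precisely the covering-system axioms of \cite[Definition 2.3]{bergfalk_definable_2024} — tailored to the locally compact Polish setting — that rule this out, so the proof is really a matter of pointing to the right consequence of those axioms and combining it with the containment $U \subseteq r^{\beta}_{\alpha}(U)$.
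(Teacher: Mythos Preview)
Your argument is correct. Cellularity is handled identically to the paper (geometric realization of a simplicial map), and your properness argument goes through: relative compactness of the members of $\mathcal{U}_{\alpha}^{X}$ together with local finiteness of $\mathcal{U}_{\beta}^{X}$ are indeed consequences of the covering-system axioms in \cite[Definition~2.3]{bergfalk_definable_2024}, and they force the preimage vertex set $M^{(0)}$ to be finite.

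The paper takes a more structural route for properness. Rather than working with an arbitrary finite subcomplex $L$, it uses the built-in filtration $|N(\mathcal{U}_{\alpha|k}^{X})|$ coming from the exhaustion $(X_k)$: every compact subset of $|N(\mathcal{U}_{\alpha}^{X})|$ lies in some $|N(\mathcal{U}_{\alpha|k}^{X})|$, and one of the covering-system axioms gives the exact equality $(r^{\beta}_{\alpha})^{-1}(|N(\mathcal{U}_{\alpha|k}^{X})|) = |N(\mathcal{U}_{\beta|k}^{X})|$, which is compact. So the paper's proof is a one-line appeal to this filtration-compatibility axiom, whereas yours unpacks the geometric content (relative compactness plus local finiteness) that ultimately underlies that axiom. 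Your version is a bit more self-contained and would apply to any refinement map between locally finite covers with relatively compact members; the paper's version is shorter because it leverages the specific bookkeeping already encoded in the covering system.
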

\begin{proof}
The map is cellular since it is the geometric realization of a map of simplicial complexes.

Since every compact subset of $|N(\mathcal{U}_{\alpha}^{X})|$ is contained in some $|N(\mathcal{U}_{\alpha | k}^{X})|$, it suffices to show that $(r^{\beta}_{\alpha})^{-1}N(\mathcal{U}_{\alpha | k}^{X})$ is compact. However, by the construction of covering systems from \cite[Section 2]{bergfalk_definable_2024}, we have:  $(r^{\beta}_{\alpha})^{-1}(|N(\mathcal{U}_{\alpha | k}^{X})|) = |N(\mathcal{U}_{\beta | k}^{X})|$.

\end{proof}

\begin{lemma}\label{lem5.7}
Suppose that $X, K$ satisfy the hypotheses of \cref{thm5.1}. Suppose also that $K^{n-1} = 0$. 
Consider the composite 
$$
\Theta_{X} := C(X, |K|) \xrightarrow{\Phi_{X}} SA(\mathfrak{U}, |K|) \xrightarrow{I_{n}} SA^{n}(\mathfrak{U}, |K|) \xrightarrow{\Psi_{n}} \mathcal{Z}^{n}(\mathfrak{U}; \pi_{n}(|K|))
$$
where $I_{n}$ is subspace inclusion. 
This composite is Borel. Moveover,  given any factorization up to homotopy 
\begin{equation}\label{factorization}
\xymatrix
{
X \ar[r]_{c_{\alpha}^{X}} \ar[dr]_{f} & |N(\mathcal{U}_{\alpha}^{X})| \ar[d]^{f_{\alpha}} \\
& |K|
}
\end{equation}
we have the equality
$$
\chi^{n}(f_{\alpha}) = [\Theta_{X}(f)]
$$
\end{lemma}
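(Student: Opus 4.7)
The proof splits naturally into three tasks: the Borel measurability of $\Theta_X$, the claim that its image actually lies in cocycles (rather than merely in cochains), and the identification with $\chi^n(f_\alpha)$ in $\check{H}^n$.

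For measurability, the plan is simply to observe that $\Phi_X$ is Borel by \cref{thm5.4}, that $I_n : SA_\mathfrak{U}(X, |K|) \hookrightarrow SA^n_\mathfrak{U}(X, |K|)$ is a continuous inclusion (a fully simplicial $p : N(\mathcal{U}_\alpha^X) \to K$ automatically restricts to a simplicial map on $n$-skeleta since simplicial maps are non-increasing in dimension, and the underlying data $(\alpha, p) \in \mathcal{N}^* \times \dom(K)^S$ does not change), and $\Psi_n$ is continuous by \cref{lem5.5}. Composition of Borel with continuous is Borel.

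To see that $\Theta_X(f)$ is a cocycle, observe that by construction $|\Phi_X(f)|$ is defined on all of $|N(\mathcal{U}_\alpha^X)|$, so in particular its restriction to the $n$-skeleton extends over the $(n+1)$-skeleton. Hence by \cref{lem3.3}(2), $c^{n+1}(|\Phi_X(f)|^{(n)}) = 0$, and trivially $c^{n+1}(0) = 0$, so \cref{lem3.5}(2) gives $\delta^n d^n(|\Phi_X(f)|^{(n)}, 0; h) = 0$. The Polish coboundary operator on $\mathcal{C}^n(\mathfrak{U}; \pi_n|K|)$ is induced levelwise from the simplicial coboundary, so this identifies $\Theta_X(f)$ as an element of $\mathcal{Z}^n(\mathfrak{U}; \pi_n|K|)$.

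For the equality, let $\beta \in \mathcal{N}^*$ be the index produced by $\Phi_X(f)$, so that both $f_\alpha \circ c_\alpha^X \simeq f$ and $|\Phi_X(f)| \circ c_\beta^X \simeq f$. By \cref{lem4.6}(2), there exists $\gamma \ge \alpha, \beta$ in $\mathcal{N}^*$ such that
\[
f_\alpha \circ r^\gamma_\alpha \simeq |\Phi_X(f)| \circ r^\gamma_\beta
\]
as maps $|N(\mathcal{U}_\gamma^X)| \to |K|$. Since the refinement maps $r^\gamma_\alpha$, $r^\gamma_\beta$ are cellular and proper (\cref{lem5.6}), the naturality of $\chi^n$ from \cref{chilemma}(2) yields
\[
(r^\gamma_\alpha)^* \chi^n(f_\alpha) = \chi^n(f_\alpha \circ r^\gamma_\alpha) = \chi^n(|\Phi_X(f)| \circ r^\gamma_\beta) = (r^\gamma_\beta)^* \chi^n(|\Phi_X(f)|).
\]
By definition, $\chi^n(|\Phi_X(f)|) = [d^n(|\Phi_X(f)|, 0; h)] = [\Psi_n(I_n \Phi_X(f))]$, which represents $[\Theta_X(f)]$ at the level of $H^n(N(\mathcal{U}_\beta^X); \pi_n|K|)$. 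Passing to the colimit defining $\check{H}^n(X; \pi_n|K|)$, the existence of the common refinement $\gamma$ above forces $\chi^n(f_\alpha)$ and $[\Theta_X(f)]$ to agree.

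The main obstacle I expect is keeping straight the two roles of the simplicial approximation data: on one hand as a function in the Polish space $SA_\mathfrak{U}^n$ (where continuity of $\Psi_n$ must be verified) and on the other as an actual map of polyhedra whose classical obstruction cochain is evaluated. In particular, some care is needed to see that the cocycle claim in step two is compatible with the choice of null-homotopy $h$ made inside the proof of \cref{lem5.5}, and that the identification in step three is genuinely an equality in $\check{H}^n$ rather than at an intermediate level; both reduce to invoking \cref{lem3.5}(2) and \cref{lem4.6}(2) as above.
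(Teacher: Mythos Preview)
Your proposal is correct and follows essentially the same approach as the paper: Borel measurability via \cref{thm5.4} and \cref{lem5.5}, and the well-definedness in $\check{H}^n$ by producing a common refinement $\gamma$ through \cref{lem4.6}(2) and then applying the naturality and homotopy-invariance of $\chi^n$ from \cref{chilemma}. You are in fact slightly more explicit than the paper in verifying that the image lands in cocycles and that $I_n$ is continuous, but the core argument is the same.
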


\begin{proof}
This composite is Borel by \cref{lem5.5} and \cref{thm5.4}.

By definition, we have that $\Theta_{X}(f) = \chi^{n}(f_{\alpha})$ for some $f_{\alpha}$ satisfying \cref{factorization}. It suffices to show that for any $f_{\beta}$ satisfying the above, we also have $\chi^{n}(f_{\alpha}) =\chi^{n}(f_{\beta})$ in $H^{n}(X, \pi_{n}(K))$. By \cref{lem4.6} (2), we can choose some $\gamma$ such that $f_{\alpha} \circ r^{\gamma}_{\alpha} \simeq f_{\beta} \circ r^{\gamma}_{\beta}$. By \cref{chilemma} (1), this implies that $\chi^{n}(f_{\alpha} \circ r^{\gamma}_{\alpha}) = \chi^{n}( f_{\beta} \circ r^{\beta}_{\alpha})$.

We then have the equalities in $H^{n}(|N(U_{\gamma}^{X})|, \pi_{n}(K))$ (and hence in $H^{n}(X, \pi_{n}(K))$)
$$
(r^{\gamma}_{\alpha})^{*}\chi^{n}(f_{\alpha}) = \chi^{n}(f_{\alpha}  \circ r^{\gamma}_{\alpha}) =  \chi^{n}(f_{\beta}  \circ r^{\gamma}_{\beta}) = (r^{\gamma}_{\beta})^{*}\chi^{n}(f_{\beta})
$$
by \cref{chilemma} (2).

\end{proof}

\begin{lemma}\label{lem5.8}
Suppose that $i : Y \rightarrow X$ is an inclusion of a closed subspace of a Polish space. Then we have:
$$
i^{*}[\Theta_{X}(f) ]= [\Theta_{Y}(f \circ i)].
$$
\end{lemma}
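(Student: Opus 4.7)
The plan is to exploit the flexibility afforded by \cref{lem5.7}: the cohomology class $[\Theta_{X}(f)]$ coincides, in $\check{H}^{n}(X;\pi_{n}|K|)$, with the image of $\chi^{n}(f_{\alpha})$ for \emph{any} simplicial approximation $f_{\alpha}:|N(\mathcal{U}_{\alpha}^{X})|\to|K|$ satisfying $f_{\alpha}\circ c_{\alpha}^{X}\simeq f$, and analogously for $f\circ i$. This freedom lets me pick a single approximation of $f$ whose restriction to $Y$ simultaneously serves as an approximation of $f\circ i$, after which the result reduces to the naturality of $\chi^{n}$ from \cref{chilemma}(2).

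Concretely, set $p_{\alpha}:=\Phi_{X}(f)$, so $|p_{\alpha}|\circ c_{\alpha}^{X}\simeq f$ and $[\Theta_{X}(f)]=[\chi^{n}(|p_{\alpha}|)]$. The induced covering system $\mathfrak{V}$ on $Y$ (see \cref{con2.6}) comes with a simplicial map of nerves $i_{\alpha}:N(\mathcal{U}_{\alpha}^{Y})\to N(\mathcal{U}_{\alpha}^{X})$, and \cref{lem4.9} provides a canonical map $c_{\alpha}^{Y}$ for $Y$ with $|i_{\alpha}|\circ c_{\alpha}^{Y}\simeq c_{\alpha}^{X}\circ i$. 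Consequently
$$
|p_{\alpha}\circ i_{\alpha}|\circ c_{\alpha}^{Y}=|p_{\alpha}|\circ|i_{\alpha}|\circ c_{\alpha}^{Y}\simeq|p_{\alpha}|\circ c_{\alpha}^{X}\circ i\simeq f\circ i,
$$
so $|p_{\alpha}\circ i_{\alpha}|$ is a legitimate simplicial approximation of $f\circ i$ on the $\alpha$-cover of $Y$. A second application of \cref{lem5.7} gives $[\Theta_{Y}(f\circ i)]=[\chi^{n}(|p_{\alpha}|\circ|i_{\alpha}|)]$.

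Next, the simplicial inclusion $|i_{\alpha}|$ is cellular (as the realization of a simplicial map) and proper: by \cref{def2.5} and the compatibility between $\mathfrak{U}$ and the induced system $\mathfrak{V}$, the preimage $|i_{\alpha}|^{-1}(|N(\mathcal{U}_{\alpha|k}^{X})|)$ lies inside $|N(\mathcal{U}_{\alpha|k}^{Y})|$, which is compact by local finiteness (compare \cref{lem5.6}). Applying \cref{chilemma}(2) with $j=|i_{\alpha}|$ and the identity on $|K|$ therefore yields
$$
|i_{\alpha}|^{*}\chi^{n}(|p_{\alpha}|)=\chi^{n}(|p_{\alpha}|\circ|i_{\alpha}|).
$$
Finally, unwinding the construction of \v{C}ech cohomology in \cref{con2.6}: the map $i^{*}:\check{H}^{n}(X;\pi_{n}|K|)\to\check{H}^{n}(Y;\pi_{n}|K|)$ induced by the inclusion is, at the level of a cocycle supported on the cover $\mathcal{U}_{\alpha}^{X}$, precisely pullback along $|i_{\alpha}|$. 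Concatenating the identifications gives
$$
i^{*}[\Theta_{X}(f)]=i^{*}[\chi^{n}(|p_{\alpha}|)]=[|i_{\alpha}|^{*}\chi^{n}(|p_{\alpha}|)]=[\chi^{n}(|p_{\alpha}|\circ|i_{\alpha}|)]=[\Theta_{Y}(f\circ i)].
$$

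The step I expect to require the most care is the final identification of $i^{*}$ on \v{C}ech cohomology with pullback along $|i_{\alpha}|$---this is standard but has to be traced through the specific bookkeeping of covering systems and the colimit in \cref{def2.6}. A minor secondary subtlety is that \cref{lem5.7} assumes $K^{n-1}=0$; if that hypothesis is already in force throughout \cref{sec5} nothing extra is needed, and otherwise one first replaces $|K|$ by a homotopy equivalent CW complex with trivial $(n-1)$-skeleton (possible since $|K|$ is $(n-1)$-connected) and verifies that $\Theta_{X}$, $\Theta_{Y}$, and $i^{*}$ transform compatibly under this replacement.
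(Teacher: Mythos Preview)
Your proposal is correct and follows essentially the same approach as the paper: build the homotopy-commutative square from \cref{lem4.9}, observe that $|\Phi_{X}(f)|\circ|i_{\alpha}|$ serves as an approximation of $f\circ i$, invoke \cref{lem5.7} on both sides, and finish with the naturality of $\chi^{n}$ from \cref{chilemma}(2). The paper's proof is more terse---it cites \cref{lem5.6} for properness of $|i_{\alpha}|$ even though that lemma is literally about refinement maps, whereas you give the analogous argument directly---and it does not flag the $K^{n-1}=0$ hypothesis or the identification of $i^{*}$ with $|i_{\alpha}|^{*}$, both of which you correctly note as points requiring care.
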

\begin{proof}
By \cref{lem4.9} and its proof, we have a homotopy commutative diagram:
\begin{equation}\label{relative}
\xymatrix
{
X \ar[r]_{c_{\alpha}^{X}} & |N(\mathcal{U}_{\alpha}^{X})| \ar[rr]_{\Phi_{X}(f)} && |K|  \\
 Y \ar[u]_{i} \ar[r]_{c_{\alpha}^{Y}} & |N(\mathcal{U}_{\alpha}^{Y})| \ar[u]_{i_{\alpha}} \ar[urr]_{\Phi_{X}(f) \circ i_{\alpha}} &&
}
\end{equation}
Thus, by \cref{lem5.7}, \cref{lem5.6} and \cref{chilemma} (2) we have the equality 
$$
[\Theta_{Y}(f \circ i)] = \chi^{n}(\Phi_{X}(f) \circ i_{\alpha}) = i_{\alpha}^{*}\chi^{n}(\Phi_{X}(f)) = i_{\alpha}^{*}[(\Theta_{X}(f))].
$$
\end{proof}

\begin{theorem}\label{thm5.9}
The map $\Theta_{X}$ from \cref{lem5.7}
 (co)restricts to a Borel function
$$
\Theta_{X, Y} : C((X, Y), (|K|, *)) \rightarrow \mathcal{Z}^{n}(\mathfrak{U}, \mathfrak{V}; \pi_{n}(K))
$$
Moreover, given a closed inclusion of locally compact Polish pairs $q : (Z ,W) \rightarrow (X, Y)$ of Polish pairs, we have the equality:
$$
q^{*}[\Theta_{X, Y}(f)] = [\Theta_{X, Y}(f \circ q)].
$$

\end{theorem}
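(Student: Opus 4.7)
The plan is to construct $\Theta_{X, Y}$ in parallel with $\Theta_X$ of \cref{lem5.7}, but using a modified Borel simplicial approximation $\Phi_{X, Y}$ that respects the pair structure, sending every vertex $U \in \mathcal{U}_\alpha^Y$ to the basepoint $* \in K$. The first task is thus to produce a Borel map $\Phi_{X, Y} : C((X, Y), (|K|, *)) \to SA_{\mathfrak{U}}(X, |K|)$ whose output $(\alpha, p) := \Phi_{X, Y}(f)$ satisfies both $|p| \circ c_\alpha^X \simeq f$ as maps of pairs and $p(U) = *$ for every $U \in \mathcal{U}_\alpha^Y$. Existence for a given $f$ is a consequence of the cofinality of $\mathfrak{U}$: since $f|_Y = *$ and $f$ is continuous, $f^{-1}(\mathrm{St}_{|K|}(*))$ is an open neighborhood of $Y$, so any sufficiently fine cover has every $U$ meeting $Y$ satisfying $f(U) \subseteq \mathrm{St}_{|K|}(*)$; the star condition of \cref{prop4.7} is then compatible with forcing $p(U) = *$ for $U \in \mathcal{U}_\alpha^Y$. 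The Borel measurability of $f \mapsto (\alpha, p)$ follows by rerunning the proof of the definable simplicial approximation theorem (Lemma 5.5 of \cite{bergfalk_definable_2024}) with the pair constraint tracked explicitly, using \cref{lem5.3} to identify the ambient Polish space.

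Next, I set $\Theta_{X, Y} := \Psi_n \circ I_n \circ \Phi_{X, Y}$. This is Borel since $\Phi_{X, Y}$ is Borel, $I_n$ is a subspace inclusion, and $\Psi_n$ is continuous (\cref{lem5.5}). To verify that the image lies in $\mathcal{Z}^n(\mathfrak{U}, \mathfrak{V}; \pi_n |K|)$, I fix an $n$-simplex $\sigma$ of $N(\mathcal{U}_\alpha^Y)$: every vertex of $\sigma$ belongs to $\mathcal{U}_\alpha^Y$ and is mapped to $*$ by $\Phi_{X, Y}(f)$, so the simplicial map sends all of $\sigma$ to the degenerate $0$-simplex $\{*\}$, and $|\Phi_{X, Y}(f)|$ is constant at $*$ on $|\sigma|$. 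Arranging the null-homotopy $H$ from \cref{lem5.5} to be based at $*$ (which is possible because $|K|$ is $(n-1)$-connected and $n \geq 2$), the value $\Psi_n(I_n(\Phi_{X, Y}(f)))(\sigma)$ is represented by a constant map $S^n \to |K|$ and hence vanishes in $\pi_n(|K|)$. Thus $\Theta_{X, Y}(f)$ is a relative cocycle.

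For the naturality assertion, the induced simplicial maps $q_\alpha : N(\mathcal{U}_\alpha^Z) \to N(\mathcal{U}_\alpha^X)$ from \cref{lem4.9} restrict to $N(\mathcal{U}_\alpha^W) \to N(\mathcal{U}_\alpha^Y)$, so $\Phi_{X, Y}(f) \circ q_\alpha$ is itself a pair-preserving simplicial approximation of $f \circ q$. By naturality of the deformation cochain (\cref{lem3.5} (4)), $q_\alpha^{\#} \Psi_n(I_n(\Phi_{X, Y}(f))) = \Psi_n(I_n(\Phi_{X, Y}(f) \circ q_\alpha))$, and a pair version of \cref{lem4.6} (2) combined with \cref{lem3.7} in the relative setting shows that this cocycle is relative-cohomologous to $\Theta_{Z, W}(f \circ q)$, yielding the desired equality in $\check{H}^n(Z, W; \pi_n |K|)$. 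The main obstacle is the Borel selection in the first step: while existence of a pair-preserving simplicial approximation at a sufficiently fine cover is elementary, making the choice of $(\alpha, p)$ depend on $f$ in a Borel fashion requires a careful adaptation of the definable simplicial approximation theorem, with the additional bookkeeping that $\mathcal{U}_\alpha^Y$ is mapped to $\{*\}$.
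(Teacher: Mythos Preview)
Your construction is plausible, but it takes a genuinely different route from the paper and, strictly speaking, does not prove the theorem as stated. The theorem asserts that the \emph{already-defined} map $\Theta_X$ of \cref{lem5.7} (co)restricts to a Borel map into the relative cocycles. You instead build a \emph{new} Borel map via a modified, pair-respecting simplicial approximation $\Phi_{X,Y}$, so what you obtain is some Borel $\Theta_{X,Y}$ landing in relative cocycles, not the restriction of $\Theta_X$. At the level of cohomology classes these agree by \cref{lem5.7}, so for the downstream applications (\cref{thm5.1}) your map would serve equally well; but the literal statement of \cref{thm5.9} is about $\Theta_X$ itself.

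The paper's argument avoids the ``main obstacle'' you identify. It keeps the original $\Phi_X$ and shows that the resulting cocycle $\Theta_X(f)=d^n(\Phi_X(f),0;h')$ already represents a relative cocycle: using \cref{prop4.10} one chooses (non-Borelly) a pair-respecting factorization $f_\alpha$; after refinement there is a homotopy $h:f_\alpha\simeq\Phi_X(f)$, and by the additivity of deformation cochains in the homotopy variable one rewrites $d^n(\Phi_X(f),0;h')=d^n(f_\alpha,0;h'-h|_{\bar X^{n-1}})$, which is visibly a relative cocycle since $f_\alpha$ sends $|N(\mathcal{U}_\alpha^Y)|$ to $*$. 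The point is that $f_\alpha$ need only \emph{exist}, not depend Borelly on $f$, because it is used solely to identify where $\Theta_X(f)$ lands. This sidesteps entirely the adaptation of the definable simplicial approximation theorem that you flag as the hard step, and simultaneously gives the naturality statement directly from \cref{lem5.8}. Your approach trades that algebraic manipulation for a heavier Borel-selection argument; it would work, but is less economical, and you should be aware that the paper neither needs nor carries out a pair-preserving Borel approximation.
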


\begin{proof}
Let $ (X, Y) \xrightarrow{(f, f|_{Y})} (|K|, *)$ be an element of $C((X, Y), (|K|, *))$. By
\cref{prop4.10} we can choose $f_{\alpha}$ as in \cref{lem5.7} to be a map such that $$
(X, Y) \xrightarrow{(c_{\alpha}^X, c_{\alpha}^Y)}
(|N(\mathcal{U}_{\alpha}^{X})|,  |N(\mathcal{U}_{\alpha}^{Y})|) \xrightarrow{(f_{\alpha}, f_{\alpha}|_{|N(\mathcal{U}_{\alpha}^{Y})|})} (|K|, *)$$ is homotopic to $f$ as a map of pairs.

As in the proof of \cref{lem5.7}, after refinement, we can assume that there is a homotopy $h : f_{\alpha} \simeq \Phi_{X}(f)$. So that we have equalities $ \Theta_{X, Y}(f) := d^{n}(\Phi_{X}(f), 0; h') = d^{n}(f_{\alpha}, 0;  h'-h|_{\bar{X}^{n-1}})$ by \cref{exam3.8}.
 But $d^{n}(f_{\alpha}, 0; h'-h|_{\bar{X}^{n-1}})$ determines an element of  $\mathcal{Z}^{n}(|N(\mathcal{U}_{\alpha}^{X})|, |N(\mathcal{U}_{\alpha}^{Y})|; \pi_{n}(K))$,  and hence an element of $\mathcal{Z}^{n}(\mathfrak{U}, \mathfrak{V}; \pi_{n}(K))$. 

The second statement is now immediate from \cref{lem5.8}. 

\end{proof}

We are now ready to conclude our section with a proof of \cref{thm5.1}. We will now briefly sketch the proof. 

First, we sketch the proof of surjectivity. Given a cocycle $q \in \check{H}^{n}(X, \pi_{n}|K|)$, which can be represented by a choice of $r \in H^{n}(|N(\mathcal{U}_{\alpha}^{X})|, \pi_{n}|K|)$, we can find a map $f_{\alpha} : |N(\mathcal{U}_{\alpha}^{X})|^{n+1} \rightarrow |K|$ such that $\chi^{n}(f) = r$. The cohomological obstruction in $H^{n+2}(|N(\mathcal{U}_{\alpha}^{X})|; \pi_{n+1}|K|)$ to extending $f_{\alpha}$ to all of $ |N(\mathcal{U}_{\alpha}^{X})|^{n+2}$ does not necessarily vanish, since $H^{m+1}(|N(\mathcal{U}_{\alpha}^{X})|; \pi_{m}|K|)$ is not necessarily $0$ for all $m \ge n+1$. But as we will see in \ref{lem5.12} below, we can use the vanishing of the cohomology of $X$, to show that the obstruction vanishes for some homotopy refinement of $f_{\alpha}$ (as defined in \cref{def5.10} below). Proceeding inductively, we show that there is a map: $f_{\gamma} : |N(\mathcal{U}_{\gamma}^{X})| \rightarrow |K|$ such that $f_{\gamma}^{n+1}$ is a homotopy refinement of $f_{\alpha}$. We can then show using \cref{lem5.7} above that $[\Theta_{X, Y}(f_{\gamma} \circ c_{\gamma}^{X})] = q$. 

Similar remarks apply to the proof of injectivity. Namely, suppose that we choose two maps $f, g : (X, Y) \rightarrow (|K|, *)$ with $\Theta_{X, Y}(f) = \Theta_{X, Y}(g)$. Suppose that $\mathbf{SH}(f), \mathbf{SH}(g)$ can be represented as $f_{\alpha} \circ c_{\alpha}^{X}, g_{\alpha} \circ c_{\alpha}^{X}$, respectively. Then we show using the difference obstruction and induction on skeleta that $f_{\beta} \simeq g_{\beta}$, for suitable refinements of $f_{\alpha}, g_{\alpha}$ (see \cref{lem5.11} below). Thus, we have $\mathbf{SH}(f) = \mathbf{SH}(g)$, and so $g \simeq f$.

\begin{definition}\label{def5.10}
Suppose that $\alpha, \beta \in \mathcal{N}^*, \alpha < \beta$. Then we call a map $ (|N(\mathcal{U}_{\beta}^{X})|, |N(\mathcal{U}_{\beta}^{Y})|)  \rightarrow (|K|, *))$ a homotopy refinement of $f : (|N(\mathcal{U}_{\alpha}^{X})|, |N(\mathcal{U}_{\alpha}^{Y})|) \rightarrow (|K|, *)$ iff $g \sim f \circ r_{\alpha}^{\beta}$.
\end{definition}

\begin{lemma}\label{lem5.11}
Suppose that $X, K$ satisfy the hypotheses of \cref{thm5.1}.
Suppose that we have $(f, g), (h, i): ( |N(\mathcal{U}_{\alpha}^{X})|, |N(\mathcal{U}_{\alpha}^{Y})|) \rightarrow (|K|, *)$ are two cellular maps, such that
$f^{n} \simeq g^{n}$, then there exists homotopy refinements $f', g'$ of $f, g$ respectively, such that $f' \simeq g'$. 

\end{lemma}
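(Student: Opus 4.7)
The natural approach is induction over skeleta: one uses the obstruction theory of \cref{sec3} to extend the partial homotopy one skeleton at a time, passing to a refinement of the covering at each stage to kill the resulting cohomological obstruction. Since $X$ has finite covering dimension $d$, one may arrange after an initial refinement that the nerve $|N(\mathcal{U}_{\alpha}^{X})|$ has dimension at most $d$ (using cofinality of the covering system of \cref{def2.5} together with the defining property of covering dimension), so that the induction runs only for $k=n,\ldots,d-1$ and terminates after finitely many steps.

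Inductively, assume one has produced a refinement $\beta\ge\alpha$, homotopy refinements $f_{\beta},g_{\beta}:(|N(\mathcal{U}_{\beta}^{X})|,|N(\mathcal{U}_{\beta}^{Y})|)\to(|K|,*)$ of $f$ and $g$, and a homotopy of pairs $h_{k}$ between their restrictions to the $k$-skeleton. By \cref{lem3.5} and \cref{lem3.7} the deformation cochain $d^{k+1}(f_{\beta},g_{\beta};h_{k})$ lies in $\mathcal{Z}^{k+1}(|N(\mathcal{U}_{\beta}^{X})|,|N(\mathcal{U}_{\beta}^{Y})|;\pi_{k+1}|K|)$, and its class is precisely the obstruction to extending the homotopy (after a modification of $h_{k}$) to the $(k+1)$-skeleton. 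By naturality of the deformation cochain (\cref{lem3.5}(4)), together with cellularity and properness of the refinement maps (\cref{lem5.6}), these obstruction classes are compatible with the transition maps of the \v{C}ech colimit. Hypothesis~(3) of \cref{thm5.1}, applied with $m=k+1\ge n+1$, gives
\[
\check{H}^{m}(X,Y;\pi_{m}|K|)\;=\;\mathrm{colim}_{\gamma}\,H^{m}(|N(\mathcal{U}_{\gamma}^{X})|,|N(\mathcal{U}_{\gamma}^{Y})|;\pi_{m}|K|)\;=\;0,
\]
so the obstruction class pulls back to zero on some $\beta'\ge\beta$. Using \cref{lem3.5}(3) one adjusts the pulled-back homotopy by a suitable $k$-cochain so that the deformation cocycle itself vanishes, and \cref{lem3.7} then produces the desired homotopy on the $(k+1)$-skeleton of $|N(\mathcal{U}_{\beta'}^{X})|$. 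Crucially this adjustment affects only the homotopy $h_{k}$ and not the maps $f_{\beta'},g_{\beta'}$, which therefore remain homotopy refinements of $f,g$.

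The principal technical obstacle is the bookkeeping across the inductive stages: at every step one must compose refinement maps and verify that the newly constructed homotopy on the $(k+1)$-skeleton is compatible, on the $k$-skeleton, with a suitable modification of the one built at the previous stage, and that the resulting data at the final level $k=d$ still comes from a single pair of maps on a common refinement $\gamma\ge\alpha$. The complementary vanishing $\check{H}^{k}(X,Y;\pi_{k}|K|)=0$ from condition~(3) is expected to enter precisely in managing this compatibility, affording enough freedom to reconcile the $k$-cochain modifications consistently through the tower of refinements. A secondary but genuine point is the reduction to the bounded-dimension case of the covering system mentioned at the outset, which rests on the classical characterisation of covering dimension.
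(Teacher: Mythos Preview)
Your approach is essentially the paper's own: induction on skeleta, at each stage form the deformation cochain, pass to a refinement using the vanishing of $\check{H}^{m}(X,Y;\pi_{m}|K|)$ to kill its cohomology class, invoke naturality via \cref{lem3.5}(4) and \cref{lem5.6}, and appeal to \cref{lem3.7} to push the homotopy up one dimension; finite covering dimension terminates the process.

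Two small remarks on your write-up. First, the bookkeeping you worry about is lighter than you suggest: the paper simply takes $f_{l+1}:=f_{l}\circ r^{\beta_{l+1}}_{\beta_{l}}$ and $g_{l+1}:=g_{l}\circ r^{\beta_{l+1}}_{\beta_{l}}$ at each step, so the refinements compose automatically and no separate compatibility argument is needed---one just lands on a single $\gamma$ after finitely many compositions. Second, your speculation that the ``complementary'' vanishing $\check{H}^{m+1}(X,Y;\pi_{m}|K|)=0$ is needed here is off: the paper's proof of this lemma uses only $\check{H}^{m}(X,Y;\pi_{m}|K|)=0$ for $m\ge n+1$. The other vanishing is what drives the extension argument in \cref{lem5.12}, not the uniqueness argument here. (Relatedly, the reference to \cref{lem3.5}(3) for ``adjusting the homotopy'' is not quite the right citation---that item produces a new \emph{map} with a prescribed difference cochain; what you actually need is already packaged in \cref{lem3.7}.)
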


\begin{proof}

We will prove by induction on $l$, that there exist cellular homotopy refinements $f_l, g_l$ of $f^{n+l}, g^{n+l}$ that are homotopic. The case $n=0$ is trivial. 

In general, suppose that such refinements exist. 
 Consider
$$
[d^{n+l}(f_l, g_l; h_{l})] \in H^{n+l}(|N(\mathcal{U}_{\beta_l}^{X})|, |N(\mathcal{U}_{\beta_l}^{Y})|; \pi_{n+l}|K|)
$$
Since $ \check{H}^{n+l}(X, Y; \pi_{n+l}|K|) = 0$, we can choose a refinement map $r^{\beta_{l+1}}_{\beta_l}$, such that 
$$
[(r^{\beta_{l+1}}_{\beta_l})^{\#} d^{n+l}(f_l, g_l; h_{l})]  = 0
$$
in $H^{n+l}(|N(\mathcal{U}_{\beta_{l+1}}^{X})|, |N(\mathcal{U}_{\beta_{l+1}}^{Y})|; \pi_{n+l}|K|)$. We have by \cref{lem5.6} and \cref{lem3.5} (3) that
$$
0 =[(r^{\beta_{l+1}}_{\beta_l})^{\#}  d^{n+l}(f_l, g_l; h_{l}) = [d^{n+l}(f_l \circ r^{\beta_{l+1}}_{\beta_l}, g_l \circ r^{\beta_{l+1}}_{\beta_l}; h_{l} \circ r^{\beta_{l+1}}_{\beta_{l}} )]
$$
so that $f_{l + 1} := f_l \circ r^{\beta_{l+1}}_{\beta_l}, g_{l+1} := g_l \circ r^{\beta_{l+1}}_{\beta_l}$ are homotopic by \cref{lem3.7}. If necessary, we can use the cellular approximation theorem to replace each map up to homotopy with a cellular one.

Since $X$ has finite covering dimension, we can assume WLOG that there exists $l$ such that $N(\mathcal{U}_{\beta})$ is $n+l$ skeletal for each $\beta > \alpha$. Thus the result follows from the statement of the first paragraph above.

\end{proof}

\begin{lemma}\label{lem5.12}
Suppose that $X, K$ satisfy the conditions of \cref{thm5.1}. Suppose that we have a map $f : |N(\mathcal{U}_{\alpha}^{X})|^{n+1} \rightarrow |K|$. Then there exists  $ \beta > \alpha$ and 
some cellular map $g : |N(\mathcal{U}_{\beta}^{X})| \rightarrow |K|$ such that $g^{n+1}$ is a homotopy refinement of $f$.
\end{lemma}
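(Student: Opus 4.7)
The plan is to construct $g$ by inductively extending $f$ to successively higher skeleta, refining the cover at each stage so as to kill the next cohomological obstruction; this mirrors the inductive scheme of the proof of \cref{lem5.11}.

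I would inductively build an increasing sequence $\alpha = \beta_0 \leq \beta_1 \leq \beta_2 \leq \cdots$ in $\mathcal{N}^*$ together with cellular maps
$$f_l : |N(\mathcal{U}_{\beta_l}^X)|^{n+1+l} \to |K|,$$
with $f_0 = f$ and $f_{l+1}|^{n+l} = (f_l \circ r^{\beta_{l+1}}_{\beta_l})|^{n+l}$. Given $f_l$, the obstruction cocycle $c^{n+l+2}(f_l) \in \mathcal{Z}^{n+l+2}(|N(\mathcal{U}_{\beta_l}^X)|; \pi_{n+l+1}|K|)$ supplied by \cref{con3.1} and \cref{lem3.3}(1) may be cohomologically nonzero, but hypothesis~(3) of \cref{thm5.1} (with $m = n+l+1 \geq n+1$) gives $\check{H}^{n+l+2}(X; \pi_{n+l+1}|K|) = 0$. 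By the colimit description of \v{C}ech cohomology (\cref{def2.6}, \cref{con2.6}) I may choose $\beta_{l+1} > \beta_l$ for which the pulled-back class vanishes, and naturality (\cref{lem3.3}(3)) then provides a cochain $d \in \mathcal{C}^{n+l+1}(|N(\mathcal{U}_{\beta_{l+1}}^X)|; \pi_{n+l+1}|K|)$ with $\delta d = c^{n+l+2}(f_l \circ r^{\beta_{l+1}}_{\beta_l})$.

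\cref{lem3.5}(3) at dimension $n+l+1$ then furnishes a map $f_{l+1}^0 : |N(\mathcal{U}_{\beta_{l+1}}^X)|^{n+l+1} \to |K|$ that agrees with $f_l \circ r^{\beta_{l+1}}_{\beta_l}$ on the $(n+l)$-skeleton and has deformation cochain $d$; by \cref{lem3.5}(2) one then has $c^{n+l+2}(f_{l+1}^0) = 0$, so \cref{lem3.3}(2) gives an extension $f_{l+1}$ to the $(n+l+2)$-skeleton (cellularized if needed). Since $X$ has finite covering dimension, I may pass to a cofinal subsystem of the covering system so that the nerves $|N(\mathcal{U}_\beta^X)|$ are of uniformly bounded dimension; then for $l$ sufficiently large $f_l$ is defined on the whole polyhedron, and I set $g := f_l$ and $\beta := \beta_l$.

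Finally, I must verify that $g^{n+1}$ is a homotopy refinement of $f$ in the sense of \cref{def5.10}. For $l \geq 1$ the modification lives in dimension $n+l+1 \geq n+2$ and thus preserves the $(n+1)$-skeleton: $f_l|^{n+1} = f_1|^{n+1} \circ r^{\beta_l}_{\beta_1}$. Hence it suffices to show that $f_1|^{n+1}$ is a homotopy refinement of $f$, and this is the main obstacle: the base case $l=0$ modifies $f_1|^{n+1}$ on $(n+1)$-cells by the cochain $d$, so on the nose it is not homotopic to $f \circ r^{\beta_1}_\alpha$. The resolution would combine the shape-theoretic framework of Section~\ref{sec4}---in particular \cref{lem4.6}, which governs when two maps into a polyhedron represent the same shape morphism after refinement---with the additional vanishing $\check{H}^{n+1}(X; \pi_{n+1}|K|) = 0$ also supplied by hypothesis~(3): after passing to a further refinement $\gamma \geq \beta_1$, the freedom to adjust $d$ by $(n+1)$-cocycles together with the cohomological triviality at degree $n+1$ should absorb the discrepancy, yielding the required equivalence $f_1|^{n+1} \circ r^\gamma_{\beta_1} \sim f \circ r^\gamma_\alpha$.
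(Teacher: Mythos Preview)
Your inductive scheme is essentially the paper's: refine to kill the obstruction class in $\check{H}^{n+l+2}(X;\pi_{n+l+1}|K|)$, then invoke the obstruction-theoretic extension (the paper phrases this as a single appeal to \cref{lem3.6} rather than the two-step \cref{lem3.5}(3)/\cref{lem3.3}(2), but the content is identical), and terminate using the finite covering dimension of $X$.

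The issue you raise in your final paragraph is real, and the paper does not address it either: it simply calls $g_{l+1}$ a ``homotopy extension'' of $g_l\circ r^{\beta_{l+1}}_{\beta_l}$ and moves on, without checking that the inductive invariant $g_{l+1}^{n+1}\simeq f\circ r$ survives the first step. Your proposed repair via \cref{lem4.6} and $\check{H}^{n+1}(X;\pi_{n+1}|K|)=0$ does not work as written: the cochain $d$ with $\delta d=c^{n+2}(f\circ r)$ is not a cocycle, so one cannot speak of its cohomology class, and adjusting $d$ by $(n+1)$-cocycles cannot make it zero unless $c^{n+2}(f\circ r)$ vanishes on the nose.

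The cleanest resolution is to weaken the inductive invariant to the one that \emph{is} preserved by \cref{lem3.6}: the construction guarantees $g_{l+1}|^{n+l-1}=(g_l\circ r^{\beta_{l+1}}_{\beta_l})|^{n+l-1}$, and since $n+l-1\geq n$ for all $l\geq 1$, this yields $g^{n}$ a (strict) refinement of $f^{n}$. Inspection of the proof of \cref{thm5.1} shows that this is exactly what is used there---the chain of equalities $\chi^n(g)=\chi^n(g^n)=\chi^n(f\circ r^\beta_\alpha)$ only requires $g^n\simeq f^n\circ r^\beta_\alpha$---so the lemma should really be stated with $g^{n}$ in place of $g^{n+1}$, and then both your argument and the paper's go through without the problematic base case.
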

\begin{proof}
We will show by induction on $l$ that there is a map $g_{l} : |N(\mathcal{U}_{\alpha}^{X})|^{n+l} \rightarrow |K|$ such that $g_{l}^{n+1}$ is a homotopy refinement of $f$. The case $l=1$ is trivial. 

In general, suppose that we choose $g_{l} : |N(\mathcal{U}_{\beta_{l}}^{X})| \rightarrow |K|$ with the required property. we can choose some refinement map $r^{\beta_{l+1}}_{\beta_{l}}$ such that 
$$
[c^{n+l}(g_l \circ r^{\beta_{l+1}}_{\beta_{l}})] = [(r^{\beta_{l+1}}_{\beta_{l}})^{\#}(c^{n+l}(g_l) )] = 0 
$$
in $H^{n+1}(|N(\mathcal{U}_{\beta_{l+1}}^{X})|; \pi_{n+l}|K| )$ by \cref{lem3.3} (3). Thus, we can choose a homotopy extension $g_{l+1}$ of $g_{l} \circ r^{\beta_{l+1}}_{\beta_{l}}$ by \cref{lem3.6}.

Since $X$ has finite covering dimension, we can assume WLOG that there exists $l$ such that $N(\mathcal{U}_{\beta}^{X})$ is $n+l$ skeletal for each $\beta > \alpha$. Thus the result follows from the statement of the first paragraph above.

\end{proof}

We are now ready for the proof of \cref{thm5.1}:

\begin{proof}[Proof of Theorem 5.1]

We want to show that the Borel function $\Theta_{X, Y}$ from \cref{thm5.9} induces a bijection $[(X, L), (|K|, *)] \cong \check{H}^{n}(X, L ; \pi_{n}(|K|))$.

 We begin with surjectivity. Choose a cocycle $q \in \check{H}^{n}(X, Y; \pi_{n}|K|)$, represented by some $q_{\alpha} \in H^{n}(|N(\mathcal{U}_{\alpha}^{X})|, |N(\mathcal{U}_{\alpha}^{Y})|; \pi_{n}|K|)$. We can choose a map $f : |N(\mathcal{U}_{\alpha}^{X})|^{n} \rightarrow |K|$ such that $\chi^{n}(f)= q_{\alpha}$ by \cref{lem3.5} (3). We have the equality 
 $$
 0 = \delta(d^{n}(f, 0)) = c^{n+1}(f) - c^{n+1}(0) 
 $$
 Thus, we can find an extension $f' : |N(\mathcal{U}_{\alpha}^{X})|^{n+1} \rightarrow |K|$ by \cref{lem3.3} (2). Further, by applying \cref{lem5.12} we can find a cellular map $g : |N(\mathcal{U}_{\beta}^{X})| \rightarrow |K|$ such that $g^{n+1}$ is a homotopy refinement of $f$. We now have the following equalities in $\check{H}^{n}(X, Y; \pi_{n}|K|)$
 $$
 \chi^{n}(g) = \chi^{n}(g^{n}) = \chi^{n}(f \circ r^{\beta}_{\alpha}) = (r^{\beta}_{\alpha})^{*}q_{\alpha} = q.
 $$
 The first of these is by definition, the second by \cref{chilemma} (1)  and the third is by \cref{lem5.6} and \cref{chilemma} (2). By the preceding equalities and 
 \cref{lem5.7} it follows that for a canonical map $c_\alpha^{X}$,
 $\Theta_{X, Y}(g \circ c_{\alpha}^{X}) = q$. 
 
 We now will prove injectivity. Suppose that we have 
 $$
[ \Theta_{X, Y}(f)] =[\Theta_{X, Y}(g)].
 $$
By definition,  $[\Theta_{X, Y}(f)] = \chi^{n}(f_{\alpha}) $, $[\Theta_{X, Y}(g)] = \chi^{n}(g_{\alpha})$, where $f, g$ have, respectively, factorizations up to homotopy:
\begin{equation}\label{xxx}
\xymatrix
{
\ar[r]_{f} \ar[d]_{c_{\alpha}^{X}} X  & |K| & X \ar[d]_{c_{\beta}^{X}} \ar[r]_{g} & |K|  \\
|N(\mathcal{U}_{\alpha}^{X})| \ar[ur]_{f_{\alpha}} & &  |N(\mathcal{U}_{\beta}^{X})|  \ar[ur]_{g_{\alpha}} & 
}
\end{equation}
By \cref{lem4.6} (1) and \cref{chilemma} (2) we can assume that $\alpha = \beta$. By \cref{chilemma} (1), we have that $g_{\beta}^{n} \simeq f_{\beta}^{n} $. By \cref{lem5.11} it follows that $f_{\beta} \circ r^{\gamma}_{\beta}, g_{\beta} \circ r^{\gamma}_{\beta}$ are both homotopic for some choice of $\gamma$. We then have equalities
$$
f \simeq f_{\beta} \circ c_{X}^{\beta} \simeq f_{\beta} \circ r^{\gamma}_{\beta} \circ c_{\gamma}^{X} \simeq g_{\beta} \circ r^{\gamma}_{\beta} \circ c_{\gamma}^{X} \simeq g_{\beta} \circ c_{X}^{\beta} \simeq g
$$
using \cref{lem4.6} (1). 
  \end{proof}

\section{The Solecki Groups of (Definable) Cohomology}\label{sec6}

 In \cite{casarosa_bootstrap_2025}, the authors characterized the Solecki subgroups of the image of a cohomological functor $\mathcal{T} \rightarrow \mathbf{LH}(\mathbf{PMod})$, where $\mathcal{T}$ is an arbitrary triangulated category, and $\mathbf{LH}(\mathbf{PMod})$ is the left heart of Polish modules. The purpose of this section is to review the results of the aforementioned paper, and then apply them to characterize the Solecki subgroups of the definable cohomology of a locally compact Polish space.
\\

We will begin by reviewing the language of triangulated categories:

\begin{definition}\label{def6.1}
An pre-additive category is a category enriched over the category of abelian groups. An additive category is a category which is a pre-additive category with a terminal object. 

We say that an additive category is \emph{quasi-abelian} if both $\mathcal{A}$ and its dual satisfy following conditions: 
\begin{enumerate}
\item{$\mathcal{A}$ has all kernels}
\item{the class of kernels is closed under pushout}
\end{enumerate}
We say that an additive category is \emph{abelian} if it is quasi-abelian and all arrows are strict \cite[Chapter VIII]{mac_lane_categories_1998}. Equivalently, we have the following conditions:
\begin{enumerate}
\item{$\mathcal{A}$ has all kernels}
\item{every monic is a kernel}
\end{enumerate}
\end{definition}

\begin{definition}\label{def6.1}
A \emph{triangulated category} is an additive category $\mathcal{T}$ which is endowed with a additive functor $\Sigma$, together with a collection of \emph{distinguished triangles} which are diagrams in $\mathcal{T}$ of the form:
$$
A \rightarrow B \rightarrow C \rightarrow \Sigma A, 
$$
which satisfy the axioms of a triangulated category \cite[Definition 1.1.2]{neeman_triangulated_2001}, 
A functor $F : \mathcal{S} \rightarrow \mathcal{T}$ of triangulated categories is \emph{triangulated} if it maps distinguished triangles to distinguished triangles and commutes with $\Sigma$. 

Given an abelian category $\mathcal{M}$, we call a functor $F : \mathcal{T} \rightarrow \mathcal{M}$ \emph{cohomological} if it for each triangle
$$
A \rightarrow B \rightarrow C \rightarrow A[1],
$$
then
$$
F(A) \rightarrow F(B) \rightarrow F(C)
$$
is an exact sequence of abelian groups. 
\end{definition}

\begin{definition}\label{def6.2}
An \emph{exact structure} on an additive category $\mathcal{A}$ is a collection $\mathcal{E}$ of kernel-cokernel pairs $(f, g)$ that satisfy the axioms of an exact category \cite[Definition 6.1]{buhler_exact_2010}. Given $(f, g) \in \mathcal{E}$, we call $f$ an \emph{admissable monic} and $g$ an \emph{admissable epic}. The elements of $\mathcal{E}$ are called \emph{short exact sequences}. A functor of exact categories which preserves short exact sequences is called an \emph{exact functor}. 
\end{definition}

\begin{example}\label{exam6.3}

It can be shown that the class of all kernel-cokernel pairs in a quasi-abelian category is an exact structure (\cite[Proposition 4.4]{buhler_exact_2010}). Unless otherwise stated, we will assume that any exact quasi-abelian category has this structure. 
\end{example}

\begin{example}\label{exam6.4} (the derived category of an exact category)
We say that a category is \emph{idempotent complete} if it each idempotent has a kernel. Given an idempotent complete exact category $(\mathcal{A}, \mathcal{E})$, we will write $\mathrm{Ch}^{b}(\mathcal{A})$ for the category of bounded cochain complexes in $\mathcal{A}$ and $K^{b}(\mathcal{A})$ be the category whose objects are bounded complexes in $\mathcal{A}$ and whose morphisms are homotopy classes of maps of bounded complexes \cite[Section 10]{buhler_exact_2010}. 

We can equip $K^{b}(\mathcal{A})$ with the structure of a triangulated category as follows. The translation functor $\Sigma$ is given by the formula $\Sigma(A)^{n} = A^{n+1}$. We declare a triangle to be distinguished if it is isomorphic to an \emph{strict triangle} of the form:
$$
A \xrightarrow{f} B \xrightarrow{i} \mathrm{cone}(f) \xrightarrow{j} \Sigma A
$$
where $i$ and $j$ are the canonical morphisms of complexes \cite[Definition 9.2]{buhler_exact_2010}. We say that a complex $A$ is \emph{acyclic} if for each $n$, the sequence
$$
A^{n} \rightarrow A^{n+1} \rightarrow A^{n+2}
$$
is an exact sequence of admissible morphisms. 

The subcategory $N^{b}(\mathcal{A})$ of $N^{b}(\mathcal{A})$ spanned by bounded acyclic complexes is thick \cite[Corollary 10.11]{buhler_exact_2010}, namely it is strictly full and it is closed under taking direct summands of its objects \cite[Corollary 10.11]{buhler_exact_2010}. One can thus define the derived category to be the Verdier quotient : 
$$
D^{b}(\mathcal{A}) := K^{b}(\mathcal{A}) / N^{b}(\mathcal{A})
$$
(see \cite[Chapter 2]{neeman_triangulated_2001}).
By definition, this is the localization of the abelian category $K^{b}(\mathcal{A})$ at the set of quasi-isomorphisms. In particular, this means that there is a quotient functor
$Q : K^{b}(\mathcal{A}) \rightarrow D^{b}(\mathcal{A})$
that is universal among all triangulated functors $K^{b}(\mathcal{A}) \rightarrow \mathcal{T}$ that invert quasi-isomorphisms. 

Similarly, we can define $K^{+}(\mathcal{A}), K^{-1}(\mathcal{A})$ the categories of non-negatively and negatively graded bounded chain complexes, as well as the quotients $D^{+}(\mathcal{A}), D^{-}(\mathcal{A})$.
 \end{example}

\begin{example}\label{exam6.5}
Suppose that $\mathcal{A}$ is a quasi-abelian category. Then $\mathcal{A}$ is idempotent complete exact category, so that $D^{b}(\mathcal{A})$ exists. Following \cite{schneiders_quasi-abelian_1999}, we define the \emph{left heart} of $\mathcal{A}$ to be $D^{+}(\mathcal{A}) \cap D^{-}(\mathcal{A})$. Alternatively, $\mathbf{LH}(\mathcal{A})$ is the subcategory of $D^{b}(\mathcal{A})$ which is spanned by complexes $A$ with $A^{n} = 0$ for all $n \in \mathbb{Z}-\{ 0, 1\}$ and such that $\delta^{-1}: A^{-1} \rightarrow A^{0}$ is a monomorphism. 

Consider the intelligent truncations $\tau_{\le 0} : D^{b}(\mathcal{A}) \rightarrow D^{+}(\mathcal{A}), \tau_{\ge 0} : D^{b}(\mathcal{A}) \rightarrow D^{-}(\mathcal{A})$, which preserve homology and, respectively, truncate $A$ below (above) $0$. Then for each $n \in \mathbb{N}$, we have a canonical cohomological functor 
$$
H^{n} : \mathcal{D}^{b}(\mathcal{A}) \rightarrow \mathbf{LH}(\mathcal{A})
$$
given by $H^{n}(C) = \tau_{\le 0} \tau_{\ge 0} \Sigma^{-n}C$. 
\end{example}

Let $R$ be a PID. We say that a Polish $R$-module is an $R$-module that is also a Polish abelian group, and such that $x \mapsto rx$ is continuous. We write $\mathbf{PMod}(R)$ for the category of Polish modules and continuous $R$-homomorphisms between them. 

\begin{theorem}\label{thm6.6} (\cite[Theorem 6.13]{lupini_looking_2022})
The category $\mathbf{PMod}$ is an quasi-abelian category. Furthermore $\mathbf{LH}(\mathbf{PMod})$ is equivalent to the category of abelian groups with a Polish cover. 
\end{theorem}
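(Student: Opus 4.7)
The proof splits naturally into two parts: establishing the quasi-abelian structure on $\mathbf{PMod}$, and then identifying its left heart with groups with a Polish cover. For the first part, the plan is to explicitly construct kernels and cokernels and verify the relevant closure axioms. Given a continuous $R$-homomorphism $f : M \to N$ of Polish modules, the kernel $\ker(f) = f^{-1}(0)$ is a closed submodule of $M$ with the subspace topology, which is automatically Polish. The cokernel is $N/\overline{\mathrm{im}(f)}$ with the quotient topology, which is Polish since the quotient of a Polish abelian group by a closed normal subgroup is Polish. One then identifies strict monomorphisms in $\mathbf{PMod}$ with closed embeddings and strict epimorphisms with surjective (automatically open, by the open mapping theorem for Polish groups) morphisms. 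The nontrivial closure axioms then amount to showing that, given a closed embedding $f : A \hookrightarrow B$ and any morphism $g : A \to C$, the pushout, constructed as $(B \oplus C)/\overline{\{(f(a),-g(a)) : a \in A\}}$, yields a closed embedding $C \hookrightarrow (B\oplus C)/\overline{\{(f(a),-g(a)) : a \in A\}}$, and dually for pullbacks along strict epimorphisms.

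For the equivalence with groups with a Polish cover, the plan is to use the concrete description from \cref{exam6.5}: an object of $\mathbf{LH}(\mathbf{PMod})$ is a two-term complex $[M^{-1} \xrightarrow{\delta} M^0]$ concentrated in degrees $-1$ and $0$ with $\delta$ a monomorphism. Since monomorphisms in $\mathbf{PMod}$ are precisely injective continuous $R$-homomorphisms, $\delta(M^{-1})$ is a Polishable subgroup of $M^0$, and the assignment $[M^{-1} \xrightarrow{\delta} M^0] \mapsto M^0/\delta(M^{-1})$ defines the candidate functor. Essential surjectivity follows directly from the definition of Polishable subgroup: every group with Polish cover $G/\phi(H)$ arises from the complex $H \xrightarrow{\phi} G$. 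The inverse functor, going from a group with Polish cover $G/H$ back to the chosen presentation, is similarly natural.

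The main obstacle will be verifying full faithfulness. Morphisms in $\mathbf{LH}(\mathbf{PMod})$ are morphisms in the derived category $D^b(\mathbf{PMod})$, which arise as roofs $X \xleftarrow{\mathrm{qis}} X' \to Y$ involving strict quasi-isomorphisms in the sense of Schneiders. One must show that every such roof between two two-term complexes with injective boundary maps descends to a well-defined Borel-definable homomorphism between the corresponding quotient groups, and conversely that every homomorphism of groups with a Polish cover is represented by such a roof. The forward direction requires a careful analysis of strict quasi-isomorphisms: a morphism of two-term complexes $[M^{-1} \to M^0] \to [N^{-1} \to N^0]$ is a strict quasi-isomorphism precisely when the induced map on the quotients is a bijection, at which point descriptive set theoretic lifting results for Borel homomorphisms between groups with a Polish cover supply the converse direction. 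Matching these two descriptions of $\mathrm{Hom}$ is the technical heart of the argument and the step most likely to require substantial work.
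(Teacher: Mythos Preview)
The paper does not supply its own proof of this statement; it is quoted verbatim as Theorem~6.13 of \cite{lupini_looking_2022} and used as a black box. So there is no in-paper argument to compare your proposal against.

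That said, your outline is a reasonable sketch of how the cited result is actually established. The quasi-abelian verification is routine and your identification of strict monomorphisms with closed embeddings and strict epimorphisms with open surjections (via the open mapping theorem for Polish groups) is correct. For the left-heart equivalence, the two-term complex description you invoke is the standard Schneiders approach, and you rightly locate the difficulty in full faithfulness. The step you flag as ``most likely to require substantial work'' is exactly where the descriptive set theory enters: one needs that a Borel homomorphism between Polish groups is continuous on a comeager $G_\delta$ set (Pettis-type lifting), and that this produces precisely the roof representatives of morphisms in $D^b(\mathbf{PMod})$ between such two-term complexes. Your sketch gestures at this but does not name the specific lifting theorem; if you intend a self-contained proof, that is the ingredient you must make explicit. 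Otherwise, citing \cite{lupini_looking_2022} as the paper does is the appropriate move.
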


\begin{definition}\label{def6.5}
Suppose that $\mathcal{T}$ is a triangulated category. 
 We say that an object $C$ in $\mathcal{T}$ is \emph{compact} if for each collection of objects $A_{i}$ in $\mathcal{T}$, we have:
$$
\bigoplus_{i \in \mathbb{N}} \Hom(C, A_{i}) \cong \Hom \big(C, \bigoplus_{i \in \mathbb{N}} A_{i} \big)
$$
\end{definition}

Suppose that $X$ is an object in a triangulated category. Let $\mathcal{C}$ be a class of compact objects in $X$. Let $Y$ be given object in a triangulated category $\mathcal{T}$. Then the functor $\Hom(-, Y)$ is triangulated, so since 
$$
C \rightarrow X \rightarrow X/C
$$
is an exact triangle, we have that 
$$
\Hom(X/C, Y) \rightarrow \Hom(X, Y) \rightarrow \Hom(C, Y)
$$
is exact. We thus have the following definition:

\begin{definition}\label{def6.6} (see \cite[Section 5]{christensen_ideals_1998})
Suppose that $X, Y$ are objects in a triangulated category $\mathcal{T}$. Then we define the \emph{phantom maps} from $X$ to $Y$ to be the image of the functor 
$$
\bigcap_{C \in \mathcal{C}} \mathrm{Ker}(\Hom(X, Y) \rightarrow \Hom(C, Y)) = \bigcap_{C \in \mathcal{C}} \mathrm{Ran}(\Hom(X/C, Y)  \rightarrow \Hom(X, Y))
$$
\end{definition}

Generalizing the definition of phantom maps from the literature, \cite[Section 8.2]{casarosa_bootstrap_2025} introduced the notion of \emph{$\alpha$ Phantom subfunctor} of a cohomological functor. in the case of the functor $\Hom(-, Y)$ we obtain a notion of $\alpha$-phantom map. 
 
\begin{definition}\label{def6.9}
Suppose that $\mathcal{T}$ is a triangulated category with a class of compact objects $\mathcal{C}$. 
Suppose that $F : \mathcal{T} \rightarrow \mathcal{M}$ is a cohomological functor. Then the \emph{phantom subfunctor} of order $\alpha$ is defined inductively as follows:
$$\mathrm{Ph}^{0}(F)(X) = \bigcap_{C \in \mathcal{C}} \mathrm{Ran}(F(X/C ) \rightarrow F(X)).$$
 For each successor ordinal $\alpha$, we let 
 $$
 \mathrm{Ph}^{\alpha+1}F(X) = \bigcap_{C \in \mathcal{C}} \mathrm{Ran}(\mathrm{Ph}^{\alpha}F(X/C ) \rightarrow F(X)).
 $$
 For each limit ordinal $\lambda$, let 
 $$
 \mathrm{Ph}^{\lambda}F(X) = \cap_{\lambda'< \lambda}  \mathrm{Ph}^{\lambda'}F(X).
 $$
 
\end{definition}

\begin{lemma}\label{lem6.10} (see \cite[Section 8.2]{casarosa_bootstrap_2025})
Let $F : \mathcal{T} \rightarrow \mathcal{M}$ is a cohomological functor. 
Suppose that $X$ is an object such that $X = \mathrm{hocolim} \, X_{i}$. Then we have:
\begin{enumerate}
\item{$\mathrm{Ph}(F)= \bigcap_{i \in \mathbb{N}} \mathrm{Ran}(F(X/X_{i}) \rightarrow F(X) )$}
\item{For any successor ordinal $\alpha+1$, we have: 
$$
\mathrm{Ph}^{\alpha+1}(F(X)) = \bigcap_{i \in \mathbb{N}} \mathrm{Ran}(\mathrm{Ph}^{\alpha}(F(X/X_{i}) \rightarrow F(X) )
$$ }
\end{enumerate}
\end{lemma}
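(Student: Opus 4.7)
The plan is to rewrite each range as the kernel of a restriction map coming from the long exact sequence of a distinguished triangle, and then exploit that maps from compact objects into the homotopy colimit $X = \mathrm{hocolim}\, X_i$ factor through some $X_i$. Throughout I will assume the $X_i$ are themselves compact in $\mathcal{T}$, which is the setup in the intended application to definable \v{C}ech cohomology (via the covering systems of \cref{con2.6}).

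For part (1), for any compact $C$ equipped with a map $C \to X$ and the resulting triangle $C \to X \to X/C \to \Sigma C$, cohomologicality of $F$ yields the exact sequence $F(X/C) \to F(X) \to F(C)$, so $\mathrm{Ran}(F(X/C) \to F(X)) = \mathrm{Ker}(F(X) \to F(C))$. Hence $\mathrm{Ph}^{0}(F)(X) = \bigcap_{C} \mathrm{Ker}(F(X) \to F(C))$, and likewise the right-hand side equals $\bigcap_{i} \mathrm{Ker}(F(X) \to F(X_i))$. The containment $\mathrm{Ph}^{0}(F)(X) \subseteq \bigcap_{i} \mathrm{Ker}(F(X) \to F(X_i))$ is immediate since each $X_i$ is among the compact objects indexing the intersection. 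For the reverse, given $x$ killed by every $F(X) \to F(X_i)$ and any compact $C \to X$, the defining property of $\mathrm{hocolim}$ together with compactness of $C$ provides a factorization $C \to X_i \to X$ for some $i$, so $F(X) \to F(C)$ factors as $F(X) \to F(X_i) \to F(C)$ and $x$ vanishes in $F(C)$.

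For part (2), the forward containment again follows from compactness of each $X_i$. For the reverse, fix $x$ in the intersection and an arbitrary compact $C \to X$, and factor it as $C \xrightarrow{g} X_i \xrightarrow{h} X$. Applying the octahedral axiom to $g$ and $h$ produces a distinguished triangle $X_i/C \to X/C \to X/X_i \to \Sigma(X_i/C)$ together with the commutativity that the composite $X \to X/C \to X/X_i$ coincides with the canonical map $X \to X/X_i$. The induced map $F(X/X_i) \to F(X/C)$ restricts, by naturality of $\mathrm{Ph}^{\alpha}$ as a subfunctor of $F$, to a map $\mathrm{Ph}^{\alpha}F(X/X_i) \to \mathrm{Ph}^{\alpha}F(X/C)$. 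Choosing $y \in \mathrm{Ph}^{\alpha}F(X/X_i)$ with image $x$ in $F(X)$ and letting $z$ be its image in $\mathrm{Ph}^{\alpha}F(X/C)$, the octahedral commutativity identifies the image of $z$ in $F(X)$ with $x$, proving $x \in \mathrm{Ran}(\mathrm{Ph}^{\alpha}F(X/C) \to F(X))$.

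The main obstacle is the second direction of part (2): one has to verify both that $\mathrm{Ph}^{\alpha}$ is genuinely a natural subfunctor (so that morphisms in $\mathcal{T}$ restrict to it) and that the octahedral axiom delivers the needed commutative triangle. The naturality is a straightforward transfinite induction on $\alpha$ using that the composite of $E \to A$ with $A \to B$ is still a map from a compact object $E$ into $B$, while the octahedral step is a bookkeeping exercise within the standard octahedral diagram.
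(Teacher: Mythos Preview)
The paper does not supply its own proof of this lemma: it is stated with the citation ``(see \cite[Section 8.2]{casarosa_bootstrap_2025})'' and no proof environment follows, so there is nothing in the present text to compare your argument against directly.

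Your proposal is nonetheless the standard and correct route under the hypothesis you isolate (that the $X_i$ themselves lie in the class $\mathcal{C}$ of compacts). The conversion of ranges to kernels via the exact sequence $F(X/C)\to F(X)\to F(C)$, the cofinality step using that any morphism from a compact object into $X=\mathrm{hocolim}\,X_i$ factors through some $X_i$, and the invocation of the octahedral axiom on the composable pair $C\to X_i\to X$ to produce a triangle $X_i/C\to X/C\to X/X_i$ compatible with the canonical projections from $X$, are precisely the ingredients one expects; the transfinite-induction check that $\mathrm{Ph}^{\alpha}$ is a genuine subfunctor (hence natural) is straightforward as you say. One small point worth making explicit: the intersection in \cref{def6.9} is tacitly indexed by morphisms $C\to X$ with $C\in\mathcal{C}$, not merely by objects, since otherwise the cone $X/C$ is undefined; your argument already treats it this way by factoring the given morphism through some $X_i$.
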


\begin{theorem}\label{thm6.11} (see \cite[Theorem 8.2]{casarosa_bootstrap_2025})
Let $R$ be a countable PID. 
Suppose that there is a triangulated functor $\mathcal{T} \rightarrow \mathbf{LH}(\mathbf{PMod}(R))$. Then if $F(X_{i})$ is countable, then $\mathrm{Ph}^{\alpha}(F(X)) = s_{\alpha}F(X)$ for each $\alpha < \omega_1$.
\end{theorem}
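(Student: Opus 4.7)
The plan is transfinite induction on $\alpha < \omega_1$, carried out simultaneously for all objects satisfying the countability hypothesis, using \cref{lem6.10} to peel off the phantom structure one level at a time and matching it against the inductive definition of the Solecki chain in \cref{def1.10}. Throughout, the exact triangle $X_i \to X \to X/X_i \to \Sigma X_i$ produces, via the cohomological functor $F$, the exact sequence $F(X/X_i) \to F(X) \to F(X_i)$ in $\mathbf{LH}(\mathbf{PMod}(R))$; this is the bridge between the homological content and the descriptive set theoretic content, and the countability of $F(X_i)$ is what transfers information between the two.

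For the base case $\alpha = 0$, \cref{lem6.10}(1) gives $\mathrm{Ph}^0 F(X) = \bigcap_i \mathrm{Ran}(F(X/X_i) \to F(X)) = \bigcap_i \mathrm{Ker}(F(X) \to F(X_i))$. Since $F(X_i)$ is countable, each kernel is a closed (in particular $\mathbf{\Pi}^0_1$) Polishable subgroup of $F(X)$, and their intersection is again closed; by \cref{def1.10}(1) this intersection coincides with the closure $s_0 F(X)$ of the trivial subgroup in the Polish cover. For the successor step, combine \cref{lem6.10}(2) with the inductive hypothesis applied at each $X/X_i$ (which inherits an exhaustion from that of $X$, with countable $F$-values extracted from the exact sequence above) to obtain
$$\mathrm{Ph}^{\alpha+1} F(X) = \bigcap_i \mathrm{Ran}\bigl(s_\alpha F(X/X_i) \to F(X)\bigr).$$
Functoriality of the Solecki chain in $\mathbf{LH}(\mathbf{PMod}(R))$ sends each such range into $s_\alpha F(X)$, so the intersection can be computed inside $s_\alpha F(X)$ equipped with its canonical Polish cover. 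Replaying the base-case argument one level deeper, using the exact sequence together with the countability of $F(X_i)$ to recognize each range as a closed subgroup in this finer Polish topology, identifies the intersection with $s_1(s_\alpha F(X)) = s_{\alpha+1} F(X)$ via \cref{def1.10}(2). The limit step is immediate, since both $\mathrm{Ph}^\lambda$ and $s_\lambda$ are defined as $\bigcap_{\lambda' < \lambda}$ of their predecessors.

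The principal obstacle lies in the successor step: one must verify that the map $s_\alpha F(X/X_i) \to s_\alpha F(X)$ is continuous with respect to the intrinsic Polish topologies on these Polishable subgroups (not merely in the ambient Polish covers of $F(X/X_i)$ and $F(X)$), so that the base-case closure argument can be faithfully reproduced at each level of the induction. This is a structural property of the Solecki construction in $\mathbf{LH}(\mathbf{PMod}(R))$ that must be established first, essentially by showing that the $s_\alpha$ extend to continuous natural transformations on morphisms between groups with Polish cover. Once that is in place, the countability of $F(X_i)$ feeds into every level of the induction through the same kernel/closure identification that drove the base case.
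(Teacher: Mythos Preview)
The paper does not supply a proof of \cref{thm6.11}; the result is quoted from \cite[Theorem 8.2]{casarosa_bootstrap_2025} and immediately applied in \cref{cor6.12}. There is therefore no in-paper argument to compare your proposal against.

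As for the proposal itself, the transfinite inductive skeleton is the natural one and the limit step is indeed formal, but the base and successor steps contain genuine gaps beyond the one you flag. In the base case you observe that each $\mathrm{Ker}(F(X)\to F(X_i))$ is closed (since $F(X_i)$ is countable) and hence so is the intersection; but this yields only one containment, $s_0 F(X)\subseteq \bigcap_i \mathrm{Ker}$. The substantive direction---that every element of the intersection already lies in the closure of the Polishable subgroup $H$ presenting $F(X)=G/H$---is exactly where the presentation $X=\mathrm{hocolim}\,X_i$ and the countability of $F(X_i)$ must do real work (typically through a Milnor-type $\mathrm{lim}^1$ exact sequence in $\mathbf{LH}(\mathbf{PMod}(R))$), and you do not address it. Your description of $s_0$ as ``the closure of the trivial subgroup in the Polish cover'' is also off: by \cref{def1.10}(1), $s_0^H(G)$ is the closure of $H$ in $G$, not of $\{0\}$. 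In the successor step you correctly isolate the continuity question for $s_\alpha F(X/X_i)\to s_\alpha F(X)$ in the intrinsic Polish topologies, but you also need to reproduce the hard containment of the base case at level $\alpha$, and you give no mechanism for that. The proposal is thus a reasonable outline of the inductive shape of the argument, but the content---the structural facts about Solecki subgroups of values of cohomological functors---is deferred rather than supplied, which is precisely what \cite{casarosa_bootstrap_2025} provides.
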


\begin{corollary}\label{cor6.12}
Suppose that $X$ is a locally compact Polish space, and $G$ is a countable group. 
\begin{enumerate}
\item{$s_{0}(\check{H}^{n}(X; G)) = \mathrm{Ph}(\check{H}^{n}(X; G)) = \bigcap_{i \in \mathbb{N}} \Ran(\check{H}^{n}(X, X_{i}; G) \rightarrow \check{H}^{n}(X; G)) $}
\item{For any successor ordinal $\alpha+1$, we have: $$s_{\alpha+1}(\check{H}^{n}(X; G))  = \mathrm{Ph}^{\alpha+1}(\check{H}^{n}(X; G)) = \bigcap_{i \in \mathbb{N}} \Ran(\mathrm{Ph}^{\alpha}\check{H}^{n}(X, X_{i}; G) \rightarrow \check{H}^{n}(X; G)) $$}
\item{For any limit ordinal $\lambda$, we have:
$$
s_{\lambda}(\check{H}^{n}(X; G)) = \bigcap_{\lambda' < \lambda}  \mathrm{Ph}^{\lambda'}(\check{H}^{n}(X; G))
$$
}
\end{enumerate}
\end{corollary}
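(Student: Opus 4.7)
The strategy is to deduce all three clauses as direct specializations of \cref{thm6.11} applied to the definable \v{C}ech cohomology functor, combined with the recursive description of phantom subfunctors in \cref{lem6.10}. The actual content of the corollary lies in checking that the hypotheses of \cref{thm6.11} are met in the setting of locally compact Polish spaces and their exhaustions; the inductive formulas then follow mechanically.

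The first step is to exhibit $\check{H}^{n}(-;G)$ as a cohomological functor $F \colon \mathcal{T} \to \mathbf{LH}(\mathbf{PMod}(\mathbb{Z}))$ on a triangulated category $\mathcal{T}$ of locally compact Polish spaces (for instance, a stable shape category built on the framework of \cref{sec4}, or a suitable bounded derived category built from the Polish cochain complex $\mathcal{C}^{\bullet}(\mathfrak{U}; G)$ of \cref{con2.6}) in which the compact subspaces $X_i$ are compact objects in the sense of \cref{def6.5}, and in which the exhaustion $X_0 \subseteq X_1 \subseteq \cdots$ realizes $X$ as $\mathrm{hocolim}\, X_i$. That the output lands in $\mathbf{LH}(\mathbf{PMod})$ is \cref{con2.6} together with \cref{thm6.6}. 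The countability hypothesis of \cref{thm6.11}, namely that each $F(X_i) = \check{H}^{n}(X_i; G)$ is countable, holds because each $X_i$ is compact Polish and hence admits a cofinal sequence of countable, locally finite open covers with finite nerves, so $\check{H}^{n}(X_i;G)$ is a countable colimit of countable simplicial cohomology groups of finite complexes with countable coefficients.

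The second step is to identify the cofiber $X/X_i$ formed in $\mathcal{T}$ with the relative object $(X, X_i)$, so that $F(X/X_i)$ is naturally isomorphic to $\check{H}^{n}(X, X_i; G)$ as in \cref{con2.6}. This is the standard translation between the long exact sequence of the pair and the distinguished triangle $X_i \to X \to X/X_i \to \Sigma X_i$. Once this identification is available, clause (1) is immediate from \cref{lem6.10}(1) together with the base case $s_0 F(X) = \mathrm{Ph}^0 F(X)$ of \cref{thm6.11}; clause (2) follows from \cref{lem6.10}(2) and the successor case of \cref{thm6.11} applied with $F$ replaced by the functor $\mathrm{Ph}^{\alpha} F = s_{\alpha} F$ (which is cohomological on the full subcategory of objects with $F(X_i)$ countable); clause (3) is then obtained by comparing \cref{def1.10}(3) with \cref{def6.9} at a limit ordinal, using the equality $s_{\lambda'} F = \mathrm{Ph}^{\lambda'} F$ for each $\lambda' < \lambda$ guaranteed by the inductive content of \cref{thm6.11}.

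I expect the main obstacle to be the precise setup of $\mathcal{T}$ itself, namely verifying that an exhaustion of a locally compact Polish space by compacta genuinely realizes $X$ as a homotopy colimit in a triangulated category for which $\check{H}^{n}(-;G)$ is a cohomological functor and the $X_i$ are compact objects. Morally this is built into \cref{def2.5} and \cref{con2.6}, since both $\mathcal{C}^{\bullet}(\mathfrak{U}; G)$ and its relative variants $\mathcal{C}^{\bullet}(\mathfrak{U}, \mathfrak{V}_i; G)$ are constructed so that local data on each $X_i$ assembles to global data on $X$, but turning this into a clean statement at the triangulated level will require some careful chain-level bookkeeping to ensure that the colimit-over-exhaustion picture on the space side matches the homotopy colimit picture on the derived side.
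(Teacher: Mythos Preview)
Your outline is essentially correct and matches the paper's argument: verify the hypotheses of \cref{thm6.11} and read off the inductive formulas via \cref{lem6.10} and \cref{def6.9}. The one substantive difference is in how the triangulated category $\mathcal{T}$ is chosen. You list two options, a stable shape category of spaces versus $D^{b}(\mathbf{PMod})$ built from the Polish cochain complexes, lean toward the former, and flag the setup as the main obstacle. The paper simply takes the second option: $\mathcal{T} = D^{b}(\mathbf{PMod}(\mathbb{Z}))$, the cohomological functor is the canonical $H^{0} = \tau_{\ge 0}\tau_{\le 0}$ of \cref{exam6.5}, and $X$ is represented by the object $\tau_{\le 0}\Sigma^{n}\mathcal{C}^{*}(\mathfrak{U}; G)$, with the exhaustion realized by the complexes $\tau_{\le 0}\Sigma^{n}\mathcal{C}^{*}(\mathfrak{U}_{m}; G)$. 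The homotopy colimit identity is then checked directly at the chain level, and the countability of $\check{H}^{n}(X_{m}; G)$ follows because each $X_{m}$ is compact second-countable. This choice dissolves the obstacle you anticipate: there is no need to build a triangulated category of spaces or to prove that topological compacta are categorically compact, since everything happens inside $D^{b}(\mathbf{PMod})$ where the relevant structure is already in place.
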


\begin{proof}
Suppose that $X_{0} \subseteq X_{1} \subseteq X_{2} \cdots$ is an exhaustion of $X$. Let $\mathfrak{U}$ be a choice of covering system on $X$. Write $\mathfrak{U}_{n}$ for covering system on $X_{n}$ induced by $\mathfrak{U}$. Let us write $\mathcal{C}^{*}(\mathfrak{U}; G)$ for the Polish cochain complex from \cref{con2.6}. By the definition of homotopy colimits above, it is easy to check that 
$$\tau_{\le 0} \Sigma^{n}\mathcal{C}^{*}(\mathfrak{U}; G) = \mathrm{hocolim}_{n \in \mathbb{N}} \tau_{\le 0} \Sigma^{n} \mathcal{C}^{*}(\mathfrak{U}_{n}; G).$$
in $D^{b}(\mathbf{PMod})$. 

Consider the canonical cohomological functor $H^{n} := \tau_{\ge 0}\tau_{\le 0}\Sigma^{-n} : D^{b}(\mathbf{PMod}(\mathbb{Z})) \rightarrow \mathbf{LH}(\mathbf{PMod}(\mathbb{Z}))$ from \cref{exam6.5} above. For each 
$\tau_{\le 0} \Sigma^{n} \mathcal{C}^{*}(\mathfrak{U}_{n}; G)$, we have $$\tau_{\ge 0}\tau_{\le 0} \tau_{\le 0} \Sigma^{n} \mathcal{C}^{*}(\mathfrak{U}_{n}; G) = \tau_{\ge 0}\tau_{\le 0} \Sigma^{-n} \mathcal{C}^{*}(\mathfrak{U}_{n}; G) = \check{H}^{n}(X_{n}; G)$$ is countable, since the cohomology of a compact, second-countable metric space with coefficients in a countable group is countable. Thus, we conclude using \cref{thm6.11}.
\end{proof}

We will now give a topological interpretation of the $\alpha$-phantom maps using \cref{thm5.1}. 

\begin{definition}\label{def5.13}
Suppose that $X$ is a locally compact Polish space with exhaustion $X_{0} \subseteq X_{1} \subseteq X_{2} \cdots$. Then we define the $\alpha$-phantom maps of $X$ inductively as follows.
We let $$\mathrm{Ph}^{0}(X, Y) = \bigcap_{i \in \mathbb{N}} \Ran([(X, X_{i}), (Y, *)] \rightarrow [(X, *), (Y, *)])$$
For each successor ordinal $\alpha$, we let 
$$\mathrm{Ph}^{\alpha+1}(X, Y) = \bigcap_{i \in \mathbb{N}} \Ran(\mathrm{Ph}^{\alpha}[(X, X_{i}), (Y, *)] \rightarrow [(X, *), (Y, *)])$$
and for a successor ordinal $\lambda$, we let 
$$
\mathrm{Ph}^{\lambda} = \bigcap_{\lambda' < \lambda} \mathrm{Ph}^{\lambda'}(X, Y).
$$

\end{definition}

The following is immediate from \cref{cor6.12} and \cref{thm5.1}:

\begin{corollary}\label{cor6.14}
Suppose that $X$ is a locally compact Polish space and $K$ is a locally finite countable simplicial complex. Suppose that the following conditions are satisfied:
\begin{enumerate}
\item{$\check{H}^{m+1}(X, X_{l}; \pi_{m}|K|) = \check{H}^{m}(X, X_{l}; \pi_{m}|K|) $ for each $m \ge n+1$ and $l \ge 1$}
\item{$\check{H}^{m+1}(X; \pi_{m}|K|) = \check{H}^{m+1}(X; \pi_{m+1}|K|) = 0$ for $m \ge n$}
\item{$K$ is $n-1$-connected.}
\item{$X$ has finite covering dimension}
\end{enumerate}
Then there is a natural bijection of sets
$$
\mathrm{Ph}^{\alpha}(X, |K|) \cong \mathrm{Ph}^{\alpha}(\check{H}^{n}(X, \pi_{n}|K|))
$$
for each $\alpha < \omega_1$.
\end{corollary}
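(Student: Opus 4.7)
The plan is to proceed by transfinite induction on $\alpha$, exploiting the naturality of the definable bijection $\Theta_{X,Y}$ from Theorem \ref{thm5.1}. Before starting the induction, I would verify that the four hypotheses of the corollary guarantee that Theorem \ref{thm5.1} applies both to $X$ itself (condition (2) provides the absolute vanishing, conditions (3)--(4) the remaining hypotheses) and to each pair $(X, X_l)$ (condition (1) providing the relative vanishing). Naturality of $\Theta$ in pairs then yields, for every $l$, a commutative square
\begin{equation*}
\begin{array}{ccc}
[(X, X_l), (|K|, *)] & \xrightarrow{\;\cong\;} & \check{H}^n(X, X_l; \pi_n|K|) \\
\big\downarrow & & \big\downarrow \\
{}[X, |K|] & \xrightarrow{\;\cong\;} & \check{H}^n(X; \pi_n|K|)
\end{array}
\end{equation*}
in which both horizontal maps are bijections induced by Theorem \ref{thm5.1} and the vertical maps are restriction along the inclusion of pairs.

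For the base case $\alpha = 0$, a bijection in a commutative square carries the range of one vertical map onto the range of the other. Intersecting over $l$ identifies $\mathrm{Ph}^0(X, |K|)$ with $\bigcap_{l} \mathrm{Ran}(\check{H}^n(X, X_l; \pi_n|K|) \to \check{H}^n(X; \pi_n|K|))$, which by Corollary \ref{cor6.12}(1) equals $s_0(\check{H}^n(X; \pi_n|K|)) = \mathrm{Ph}(\check{H}^n(X; \pi_n|K|))$.

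For the successor step, I would assume inductively that for each $l$ the bijection from Theorem \ref{thm5.1} (applied to the pair $(X, X_l)$ with its own exhaustion $X_l \subseteq X_{l+1} \subseteq \cdots$) carries $\mathrm{Ph}^\alpha[(X, X_l), (|K|, *)]$ onto $\mathrm{Ph}^\alpha(\check{H}^n(X, X_l; \pi_n|K|))$. Applying naturality to the square above and taking images of the two vertical maps gives an identification between $\mathrm{Ran}(\mathrm{Ph}^\alpha[(X, X_l), (|K|, *)] \to [X, |K|])$ and $\mathrm{Ran}(\mathrm{Ph}^\alpha \check{H}^n(X, X_l; \pi_n|K|) \to \check{H}^n(X; \pi_n|K|))$. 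Intersecting over $l$ and applying Corollary \ref{cor6.12}(2) completes the successor step. The limit case is immediate by intersecting over $\lambda' < \lambda$.

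The main obstacle I anticipate is bookkeeping: one must check that throughout the induction, each pair $(X, X_l)$ together with its exhaustion continues to satisfy the hypotheses of both Theorem \ref{thm5.1} and the corollary, so that the inductive hypothesis is applicable at every stage. This reduces to verifying that hypotheses (1)--(4) are inherited by the pairs $(X, X_l)$ with the shifted exhaustion $\{X_j\}_{j \ge l}$, which follows from the long exact sequence of the triple $(X, X_l, X_j)$ together with the vanishing assumed in the corollary. Once this is in place, the argument is a routine transfinite induction powered entirely by the naturality of $\Theta$ and by Corollary \ref{cor6.12}.
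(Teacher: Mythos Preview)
Your proposal is correct and follows exactly the route the paper indicates: the paper simply declares the corollary ``immediate from \cref{cor6.12} and \cref{thm5.1}'', and what you have written is a faithful unpacking of that claim---transfinite induction on $\alpha$, using the naturality of the bijection $\Theta_{X,Y}$ in pairs to match the recursive formulas for $\mathrm{Ph}^{\alpha}(X,|K|)$ (Definition~\ref{def5.13}) with those for $\mathrm{Ph}^{\alpha}(\check{H}^{n}(X;\pi_n|K|))$ (Corollary~\ref{cor6.12}). Your remark about checking that the hypotheses descend to the pairs $(X,X_l)$ with their shifted exhaustions is the only real bookkeeping, and you have identified it correctly.
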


\section{Examples of Borel Complexity for the Homotopy Relation}\label{sec7}

In this section, we will give examples of the potential Borel complexities for homotopy classes of map $[X, |K|]$, where $K$ is a simplicial complex. 
To do this, we will use the calculation of the potential complexities of $\mathrm{Ext}$ described in \cite{casarosa_bootstrap_2025}. 
\\

We start by stating a special case of  \cite[Theorem 13.25]{casarosa_bootstrap_2025} which will be used to give examples of the potential complexities of the homotopy relation:
\begin{theorem}\label{thm7.1}
Suppose that $\Gamma =  \mathbf{\Sigma}_{1 + \lambda + 1}^{0}, D(\Pi_{1 + \lambda + m}^{0}), \Pi_{1}^{0}, \Pi_{\lambda}^{0}, \Pi_{1 + \lambda + m + 1}^{0}$ for some limit ordinal $\lambda < \omega_1$ and $1 \le m < \infty$.
Then there exists a countable abelian group $A_{\Gamma}$ such that 
$$
\mathrm{Ext}^{1}(A_{\Gamma}, \mathbb{Z})
$$
has potential complexity $\Gamma$. 
\end{theorem}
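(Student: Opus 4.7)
The plan is to derive Theorem \ref{thm7.1} as a specialization of the general realization machinery for $\mathrm{Ext}^{1}$ groups developed in \cite{casarosa_bootstrap_2025}. The starting point is Construction \ref{con2.2}, which exhibits $\mathrm{Ext}^{1}(A, \mathbb{Z})$ for a countable abelian group $A$ as the group with Polish cover $Z(A, \mathbb{Z})/C(A, \mathbb{Z})$; by Example \ref{exam1.5} its coset relation is idealistic and Borel. Via \cite[Theorem 6.1]{lupini_complexity_2022}, the potential Borel complexity of this coset relation is controlled by the Solecki rank of $\mathrm{Ext}^{1}(A, \mathbb{Z})$. Hence it suffices to produce, for each complexity class $\Gamma$ in the list, a countable abelian group $A_{\Gamma}$ whose Ext with $\mathbb{Z}$ has Solecki filtration attaining the corresponding ordinal level.

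To reduce the realization problem to a homological computation, I would apply Theorem \ref{thm6.11} together with Lemma \ref{lem6.10} to the cohomological functor $\mathrm{Ext}^{1}(-, \mathbb{Z}) : D^{b}(\mathbf{Ab})^{\mathrm{op}} \to \mathbf{LH}(\mathbf{PMod})$, using an exhaustion $A = \mathrm{colim}_{i} A_{i}$ by finitely generated subgroups. This yields an inductive formula
\begin{equation*}
s_{\alpha+1} \mathrm{Ext}^{1}(A, \mathbb{Z}) = \bigcap_{i} \mathrm{Ran}\bigl(s_{\alpha} \mathrm{Ext}^{1}(A/A_{i}, \mathbb{Z}) \to \mathrm{Ext}^{1}(A, \mathbb{Z})\bigr),
\end{equation*}
with the corresponding intersection formula at limit ordinals, reducing everything to an explicit choice of $A$ and the tracking of its Ext-filtration along this exhaustion. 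The class $\mathbf{\Pi}_{1}^{0}$ is witnessed trivially by $A_{\Gamma} = \mathbb{Z}$, since $\mathrm{Ext}^{1}(\mathbb{Z}, \mathbb{Z}) = 0$.

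The main obstacle is the realization step at the higher classes. For each remaining $\Gamma$ one must exhibit an $A_{\Gamma}$ whose filtration data stabilizes at precisely the sharp ordinal predicted by the Solecki rank / potential complexity dictionary. The plan is to build $A_{\Gamma}$ as a direct limit of finitely generated abelian groups indexed by a well-founded partial order whose rank is calibrated to $1 + \lambda + m$, combining pro-$p$-like towers (to push past $\mathbf{\Sigma}^{0}_{2}$), products of cyclic groups of varying orders (to reach $\mathbf{\Sigma}^{0}_{1+\lambda+1}$), and targeted pushout constructions that pair ``positive'' and ``negative'' phantom data so as to land inside $D(\mathbf{\Pi}^{0}_{1+\lambda+m})$ rather than one level above or below. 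Verifying sharpness, i.e.\ that the constructed filtration does not collapse prematurely, is the most delicate point; this is precisely what is carried out in the proof of \cite[Theorem 13.25]{casarosa_bootstrap_2025}. Once these witnesses are in hand, Theorem \ref{thm7.1} follows by combining the inductive filtration formula above with the Solecki rank correspondence.
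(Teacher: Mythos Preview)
The paper does not give its own proof of Theorem \ref{thm7.1}: it is introduced verbatim as ``a special case of \cite[Theorem 13.25]{casarosa_bootstrap_2025}'' and is simply quoted, not argued. Your proposal is therefore not competing with any argument in the paper; you are sketching the architecture behind the cited result and then, at the decisive realization step, invoking \cite[Theorem 13.25]{casarosa_bootstrap_2025} yourself. In that sense the ``approach'' is the same---both defer to the external reference---and your additional commentary (the Solecki-rank reduction via \cite[Theorem 6.1]{lupini_complexity_2022}, the phantom-filtration description from Theorem \ref{thm6.11} and Lemma \ref{lem6.10}) is accurate context rather than an alternative proof.

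One small caution: the paragraph where you outline how to build $A_{\Gamma}$ (pro-$p$ towers, products of cyclic groups, calibrated pushouts) is heuristic and does not by itself establish sharpness; you correctly flag this and hand it off to \cite{casarosa_bootstrap_2025}. Just be aware that nothing in your write-up constitutes an independent proof of the realization step, so if the intent was to give a self-contained argument, that remains a genuine gap. If the intent was merely to unpack the citation, you have done so faithfully.
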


For $X$ a CW complex, we will denote by $H_{n}(X)$ the usual singular homology of $X$. 
If $X$ is a compact Polish space, we will denote by $H_{n}^{st}$, the Steenrod homology of $X$. Note that if $X$ is a CW complex $H_{n}^{st}(X) \cong H_{n}(X)$.

\begin{lemma}\label{lem7.2}

Suppose that $X$ is a locally compact Polish space with an exhaustion $X_{0} \subseteq X_{1} \subseteq X_{2} \cdots$, 
such that $H_{i}^{st}(X_{j}) = 0$ for all $i \neq n-1, j \in \mathbb{N}$.
Then there is a definable bijection $\check{H}^{n}(X; G) \simeq \mathrm{lim}^{1}\mathrm{Hom}(H_{n-1}^{st}(X_{m}); G)$.
 \end{lemma}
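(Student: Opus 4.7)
The plan is to combine two ingredients: a Milnor-type short exact sequence for the \v{C}ech cohomology along the exhaustion, obtainable from the Polish cochain model of \cref{con2.6}, and the Steenrod--Sitnikov universal coefficient theorem applied to each compact piece $X_j$.

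First I would set up the Milnor sequence. Fixing a covering system $\mathfrak{U}$ of $X$ that restricts to a covering system $\mathfrak{U}_j$ on each $X_j$, restriction of cochains yields continuous surjections $\mathcal{C}^{*}(\mathfrak{U}; G) \to \mathcal{C}^{*}(\mathfrak{U}_j; G)$ compatible with the tower structure, and one verifies that the sequence $\mathcal{C}^{*}(\mathfrak{U}; G) \to \prod_{j} \mathcal{C}^{*}(\mathfrak{U}_j; G) \xrightarrow{1 - \mathrm{shift}} \prod_{j} \mathcal{C}^{*}(\mathfrak{U}_j; G)$ is a short exact sequence of Polish cochain complexes. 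Its long exact sequence in cohomology collapses to the definable short exact sequence
$$
0 \to \mathrm{lim}^{1}\, \check{H}^{n-1}(X_j; G) \to \check{H}^{n}(X; G) \to \mathrm{lim}\, \check{H}^{n}(X_j; G) \to 0.
$$
Next, for each $j$ the definable Steenrod--Sitnikov universal coefficient theorem applied to the compact Polish space $X_j$ gives natural short exact sequences
$$
0 \to \Ext^{1}(H_{k-1}^{st}(X_j), G) \to \check{H}^{k}(X_j; G) \to \Hom(H_{k}^{st}(X_j), G) \to 0
$$
in groups with Polish cover. The vanishing hypothesis $H_i^{st}(X_j) = 0$ for $i \neq n-1$ collapses these to definable identifications $\check{H}^{n-1}(X_j; G) \cong \Hom(H_{n-1}^{st}(X_j), G)$, $\check{H}^{n}(X_j; G) \cong \Ext^{1}(H_{n-1}^{st}(X_j), G)$, with $\check{H}^{k}(X_j; G) = 0$ otherwise. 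Substituting into the Milnor sequence yields
$$
0 \to \mathrm{lim}^{1}\, \Hom(H_{n-1}^{st}(X_j), G) \to \check{H}^{n}(X; G) \to \mathrm{lim}\, \Ext^{1}(H_{n-1}^{st}(X_j), G) \to 0.
$$

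The main obstacle will be proving that $\mathrm{lim}\, \Ext^{1}(H_{n-1}^{st}(X_j), G)$ vanishes, so that the leftmost map realises the desired bijection. My plan here is to lift each $\Ext^{1}(H_{n-1}^{st}(X_j), G)$ to a two-term Polish complex $[\Hom(F_{0}^{j}, G) \to \Hom(F_{1}^{j}, G)]$ coming from a countable free resolution $F_{1}^{j} \to F_{0}^{j} \twoheadrightarrow H_{n-1}^{st}(X_j)$, producing a tower of two-term complexes in $D^{b}(\mathbf{PMod})$ whose $R\mathrm{lim}$ computes $\check{H}^{n}(X; G)$. Because each complex is concentrated in cohomological degrees $0$ and $1$, the hypercohomology spectral sequence for $R\mathrm{lim}$ has only two nonzero rows and degenerates; the only contribution to $\check{H}^{n}(X; G)$ then comes from the $\mathrm{lim}^{1}\, \Hom$-piece already accounted for, and a degree count forces $\mathrm{lim}\, \Ext^{1}(H_{n-1}^{st}(X_j), G)$ to vanish in $\mathbf{LH}(\mathbf{PMod})$.

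Finally, tracking these identifications at the level of Polish cochain representatives ensures that the resulting map $\check{H}^{n}(X; G) \to \mathrm{lim}^{1}\, \Hom(H_{n-1}^{st}(X_m); G)$ lifts to a Borel function, which by \cref{prop1.8} produces the claimed definable bijection.
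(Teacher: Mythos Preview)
Your overall architecture---a Milnor-type short exact sequence along the exhaustion, followed by the universal coefficient theorem on each compact piece $X_j$---is parallel to the paper's. The paper, however, does not build the Milnor sequence from the Polish cochain model; it passes through the definable Huber isomorphism $\check{H}^{n}(X; G) \cong [X, K(G, n)]$ and then invokes the definable Milnor-type results for homotopy classes from \cite[Theorems 7.10 and 7.14]{bergfalk_definable_2024}, writing $[X, K(G,n)] \cong \mathrm{lim}^{1}[\Sigma X_m, K(G,n)] \cong \mathrm{lim}^{1}\check{H}^{n-1}(X_m; G)$ in one stroke, and then applies the definable universal coefficient theorem from \cite{lupini_definable_2020}. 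That route has the advantage that definability is already packaged in the cited results, whereas your short exact sequence of Polish cochain complexes would require its own verification (for instance, surjectivity of $1-\mathrm{shift}$ in the Polish category and the identification of the kernel with $\mathcal{C}^{*}(\mathfrak{U};G)$).

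Your argument for killing the right-hand term $\mathrm{lim}\,\Ext^{1}(H_{n-1}^{st}(X_j),G)$ has a genuine gap. For a tower of two-term complexes concentrated in cohomological degrees $0$ and $1$, the $E_2$-page of the $R\mathrm{lim}$ hypercohomology spectral sequence has $E_2^{1,0} = \mathrm{lim}^{1}\Hom$ and $E_2^{0,1} = \mathrm{lim}\,\Ext^{1}$, \emph{both} sitting in total degree $1$; since $\mathrm{lim}^{p}=0$ for $p\ge 2$ on a countable tower there are no nonzero differentials between them, and the abutment in total degree $1$ is exactly the middle term of your own Milnor sequence. So the spectral sequence merely reproduces that sequence, and no degree count eliminates $\mathrm{lim}\,\Ext^{1}$. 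The paper does not isolate this step explicitly either, but note that in every application in \cref{sec7} the compact pieces $X_j$ have $H_{n-1}^{st}(X_j)$ finitely generated free (Moore spaces over torsion-free finitely generated groups in \cref{thm7.6}, spheres in \cref{exam7.11}), so $\Ext^{1}(H_{n-1}^{st}(X_j), G) = 0$ termwise and the issue never arises.
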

\begin{proof}
First, we note that by the definable version of the Huber isomorphism (see \cref{thm2.7} above), \cite[Theorem 7.10]{bergfalk_definable_2024} and \cite[Theorem 7.14]{bergfalk_definable_2024}:
$$
\check{H}^{n}(X; G) \cong [X, K(n, G)] \cong \mathrm{lim}^{1}([\Sigma X_{m}, K(n, G)]) \cong \mathrm{lim}^{1}(\check{H}^{n-1}(X_{m}; G) )
$$
On the other hand, the definable version of the universal coefficient theorem from \cite{lupini_definable_2020} implies that we have a definable bijection:
$$
 \mathrm{lim}^{1}(\check{H}^{n-1}(X_{m}; G) ) \cong  \mathrm{lim}^{1} \Hom(H_{n-1}^{st}(X_{m}), G)
$$

\end{proof}

\begin{definition}\label{def7.3} A Moore space $M(A, n)$ is a CW complex that's homology satisfies the following: 
$$
H_{m}(M(A, n)) =
\left\{
	\begin{array}{ll}
		A  & \mbox{if } m = n \\
		\mathbb{Z} & \mbox{if } m = 0 \\
		0 & \mbox{if } otherwise
	\end{array}
\right.
$$
\end{definition}
\begin{construction}\label{con7.4}
We will explain how to construct a Moore space $M(A, n)$, where $A$ is an abelian group. Then $A$ has a presentation given by a short exact sequence 
\begin{equation}\label{eq1}
0 \rightarrow F_{1} \rightarrow F_{0} \rightarrow A \rightarrow 0
\end{equation}
where $F_{1}, F_{0}$ are free abelian groups. Write $J_{1}$ and $J_{0}$ for sets of minimal generators for $F_{1}$ and $F_{0}$. 

We will define a space $X$ by taking the n-skeleton to be:
$$
X^{n} = \bigvee_{\alpha \in J_{0}} S^{n}_{\alpha} 
$$
We are now going to define an attaching map for the $n+1$-cells
$$
\coprod_{\beta \in J_{1}} S^{n}_{\beta} \rightarrow X^{n} 
$$
by specifying it on each summand. 
We can write each generator of $\beta$ of $J_{1}$ in $J_{0}$ as a linear combination of generators of 
$$
\sum_{i=1}^{n} n_{i} \alpha_{i}  = \beta
$$

The attaching map on the summand $S^{n}_{\beta}$ is given by:
$$
S^{n}_{\beta} \rightarrow (\bigvee_{i=1}^{s} S^{n}_{\alpha_{i}}) \xrightarrow{\bigvee \phi_{\alpha_i}} (\bigvee_{i=1}^{n} S^{n}_{\alpha_{i}}) 
$$
where the first map is the pinch map obtained by pinching $s-1$ circles on $S^{n}$ to points, and each $\phi_{\alpha_{i}}$ is a map of degree $n_{i}$.
The cellular cochain complex of this space is:
$$
\cdots 0 \rightarrow F_{1} \rightarrow F_{0} \rightarrow 0 \cdots,
$$
concentrated in degrees $n+1, n$
from which we conclude that $X = M(A, n)$.
\end{construction}

\begin{lemma}\label{lem7.5}
Suppose that $A$ is a countable torsion-free abelian group. Then $M(A, n) = \mathrm{colim}_{i \in \mathbb{N}} M(A_{i}, n)$ where each $A_{i}$ is a finitely generated torsion-free abelian group. 

\end{lemma}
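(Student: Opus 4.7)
The plan is to realize $A$ as a filtered colimit of finitely generated torsion-free subgroups $A_i$ and then build $M(A,n)$ as the corresponding mapping telescope of the Moore spaces $M(A_i, n)$. The key input is that singular homology commutes with such telescopes, so one only needs to produce compatible choices of the $M(A_i, n)$ and compatible maps between them.

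First, I would use the countability of $A$ to enumerate its elements and let $A_i \subseteq A$ be the subgroup generated by the first $i$ of them. Each $A_i$ is finitely generated and torsion-free (as a subgroup of a torsion-free group), and therefore free abelian of finite rank $k_i$ by the structure theorem for finitely generated abelian groups. Clearly $A_1 \subseteq A_2 \subseteq \cdots$ and $A = \bigcup_i A_i$.

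Next, I would apply \cref{con7.4} to each $A_i$. Since $A_i$ is free, the presentation \cref{eq1} may be taken with $F_1 = 0$, so $M(A_i, n)$ has no $(n+1)$-cells and is simply a wedge $\bigvee_{j=1}^{k_i} S^n_j$ of $k_i$ copies of $S^n$. For each inclusion $A_i \hookrightarrow A_{i+1}$, fix bases $\{e_1, \ldots, e_{k_i}\}$ of $A_i$ and $\{f_1, \ldots, f_{k_{i+1}}\}$ of $A_{i+1}$, expand $e_j = \sum_l n_{jl} f_l$, and define a cellular map $\iota_i : M(A_i, n) \to M(A_{i+1}, n)$ on the $j$-th wedge summand as the composite of the pinch map $S^n_j \to \bigvee_l S^n_l$ with the wedge of degree-$n_{jl}$ maps, exactly in the style of \cref{con7.4}. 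By construction $(\iota_i)_* : H_n(M(A_i, n)) \cong A_i \to A_{i+1} \cong H_n(M(A_{i+1}, n))$ agrees with the inclusion $A_i \hookrightarrow A_{i+1}$.

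Finally, let $X$ be the mapping telescope of the sequence $(M(A_i, n), \iota_i)$, which is a CW complex in which each $M(A_i, n)$ sits as a subcomplex. Since singular homology commutes with filtered colimits of CW complexes, one computes $H_m(X) = \mathrm{colim}_i H_m(M(A_i, n))$, which gives $\mathbb{Z}$ in degree $0$, $\mathrm{colim}_i A_i = A$ in degree $n$, and $0$ otherwise. For $n \ge 2$ each $M(A_i, n)$ is simply connected, hence so is $X$, and by the uniqueness of simply-connected Moore spaces up to homotopy we conclude $X \simeq M(A, n)$. The main obstacle I anticipate is purely bookkeeping: one must ensure that the $\iota_i$ strictly realize the inclusions $A_i \hookrightarrow A_{i+1}$ on $H_n$ so that the telescope computes the correct colimit of groups on homology; but once bases and the integer matrices $(n_{jl})$ are fixed, this is built into \cref{con7.4} applied functorially.
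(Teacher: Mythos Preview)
Your argument is correct but follows a genuinely different route from the paper. The paper fixes the specific presentation $0 \to F_1 \to F_0 \to A \to 0$ used in \cref{con7.4} and truncates both free groups to their first few generators, obtaining compatible presentations $0 \to \bigoplus^{k_m}\mathbb{Z} \to \bigoplus^{m}\mathbb{Z} \to A_m \to 0$; the point is that \cref{con7.4} then realizes each $M(A_m,n)$ as an honest finite CW \emph{subcomplex} of the given model $M(A,n)$, so the colimit is literally the increasing union and no telescope or homotopy-uniqueness statement is needed. You instead filter $A$ by finitely generated (hence free) subgroups, so your $M(A_i,n)$ are bouquets of spheres linked by maps that are not inclusions; you must therefore pass to the mapping telescope and invoke the homotopy uniqueness of simply-connected Moore spaces to identify the result with $M(A,n)$. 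The paper's route directly hands \cref{thm7.6} an exhaustion of the explicit model from \cref{con7.4} by finite subcomplexes, which is precisely what the application calls for; your route is conceptually cleaner at each stage (no $(n{+}1)$-cells ever appear) but produces a different CW model, so feeding it into \cref{thm7.6} requires one extra identification of the telescope with $M(A,n)$ and of the partial telescopes with the $M(A_i,n)$.
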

\begin{proof}
Let us write $A$ as a short exact sequence $$0 \rightarrow \bigoplus_{i = 0}^{Q} \mathbb{Z} \xrightarrow{i} \bigoplus_{i = 0}^{\infty} \mathbb{Z} \rightarrow A \rightarrow 0 ,$$ where $Q = m$ for some integer $m$ or $Q = \infty$. For each $m <= Q$, let $k_{m}$ be the largest integer such that $i (\bigoplus_{i=0}^{k_{m}} \mathbb{Z} ) \subseteq \bigoplus_{i = 0}^{m} \mathbb{Z} \subseteq \bigoplus_{i = 0}^{\infty} \mathbb{Z} $. Consider the group $A_{m}$ with presentation
$$
0 \rightarrow \bigoplus_{i=0}^{k_{m}} \mathbb{Z} \xrightarrow{i_{m}} \bigoplus_{i=0}^{m} \mathbb{Z} \rightarrow A_{m} \rightarrow 0
$$
where $i_{m}$ is the restriction of $i$. This group is clearly torsion-free, since $A$ is. By the construction of Moore spaces, we have obvious inclusions
$$
M(A_{0}, n) \subseteq M(A_{1}, n) \subseteq \cdots
$$
and since homology commutes with filtered colimits, we have 
$$
\mathrm{colim}_{m \in \mathbb{N}} M(A_{m}, n) = M(A, n).
$$
\end{proof}

\begin{theorem}\label{thm7.6}
Suppose that $\Gamma =  \mathbf{\Sigma}_{1 + \lambda + 1}^{0}, D(\Pi_{1 + \lambda + m}^{0}), \Pi_{1}^{0}, \Pi_{\lambda}^{0}, \Pi_{1 + \lambda + m + 1}^{0}$ for some limit ordinal $\lambda < \omega_1$ and $1 \le m < \infty$.
Then there exists a countable abelian group $A_{\Gamma}$ such that for each $n \ge 3$, the homotopy relation on $C(M(A_{\Gamma}, n-1), S^{n})$ has potential complexity $\Gamma$.

Moreover, if $n \ge 4$ and $X$ is a locally compact Polish space, we have a bijection:
$$
\mathrm{Ph}^{\alpha}[X, S^{n}] \cong \mathrm{Ph}^{\alpha} \check{H}^{n}(X; \mathbb{Z})
$$
\end{theorem}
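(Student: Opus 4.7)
The plan for the first statement is as follows. Take $A_\Gamma$ as supplied by Theorem 7.1, so that $\mathrm{Ext}^1(A_\Gamma,\mathbb{Z})$ has potential complexity exactly $\Gamma$. Let $X := M(A_\Gamma, n-1)$, realized via Construction 7.4; the resulting CW complex has cells only in dimensions $0$, $n-1$, and $n$, so it is a countable locally compact Polish space of covering dimension at most $n$. The hypotheses of Theorem A then apply to $(X, \{*\})$ with $K = S^n$: the sphere is $(n-1)$-connected (hence $\pi_n S^n = \mathbb{Z}$), $X$ is finite-dimensional, and $\check{H}^m(X;\pi_m S^n) = 0$ for all $m \geq n+1$ for dimensional reasons. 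Theorem A thus yields a definable bijection $[X, S^n] \cong \check{H}^n(X;\mathbb{Z})$.

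To finish Part 1, I would identify $\check{H}^n(X;\mathbb{Z})$ definably with $\mathrm{Ext}^1(A_\Gamma,\mathbb{Z})$. Since $X$ is a finite-dimensional CW complex, its \v{C}ech cohomology agrees (definably) with its cellular cohomology, and the definable universal coefficient theorem from \cite{bergfalk_definable_2024}, applied with $H_n(X) = 0$ and $H_{n-1}(X) = A_\Gamma$, supplies a definable isomorphism $H^n(X;\mathbb{Z}) \cong \mathrm{Ext}^1(A_\Gamma,\mathbb{Z})$. Composing these two definable bijections and invoking Corollary 1.9 transfers the potential complexity $\Gamma$ from $\mathrm{Ext}^1(A_\Gamma,\mathbb{Z})$ to the homotopy relation on $C(M(A_\Gamma, n-1), S^n)$, completing the first assertion.

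For the second assertion, my plan is to apply Corollary 6.14 directly with $K = S^n$. The condition that $S^n$ is $(n-1)$-connected is automatic, so it remains to verify the \v{C}ech-cohomology vanishing hypotheses of that corollary. For $n \geq 4$ the homotopy groups $\pi_m(S^n)$ for $m > n$ are all finite and, combined with a finite-covering-dimension hypothesis on $X$, this gives the required vanishing above the dimension of $X$ both absolutely and in the relative pairs $(X, X_l)$. Corollary 6.14 then produces the natural bijection
\[
\mathrm{Ph}^\alpha[X, S^n] \cong \mathrm{Ph}^\alpha \check{H}^n(X; \pi_n S^n) = \mathrm{Ph}^\alpha \check{H}^n(X;\mathbb{Z}).
\]
The main technical obstacle I anticipate is pinning down the precise finite-dimensionality hypothesis on $X$ (and each $X_l$) under which the cohomology-vanishing hypotheses of Corollary 6.14 are actually met; once that is in place the argument is essentially formal, and the naturality of both $\mathrm{Ph}^\alpha$ constructions then promotes the definable bijection of Theorem A (applied to each relevant pair) to the phantom-level bijection above.
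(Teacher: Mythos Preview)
Your plan for the first assertion is essentially the paper's, with one difference worth flagging: you pass directly from $\check H^{n}(M(A_\Gamma,n-1);\mathbb{Z})$ to $\mathrm{Ext}^1(A_\Gamma,\mathbb{Z})$ via a ``definable UCT on the CW complex.'' The paper instead routes through Lemma~7.2 (Huber isomorphism $\Rightarrow$ Milnor $\lim^1$ sequence $\Rightarrow$ definable UCT on the \emph{compact} pieces $X_m$) and then invokes the definable identification $\lim^1\Hom(A_m,\mathbb{Z})\cong\mathrm{Ext}^1(A,\mathbb{Z})$ from \cite{bergfalk_definable_2020}. Your shortcut is morally fine, but the definable UCT in the cited literature is stated for compact metrizable spaces; Lemma~7.2 is precisely the device that reduces to that case, so you should either follow it or justify why a definable UCT applies to the non-compact $M(A_\Gamma,n-1)$ directly.

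For the second assertion there is a genuine gap. You propose to verify the hypotheses of Corollary~6.14 from ``$\pi_m(S^n)$ finite for $m>n$'' together with finite covering dimension of $X$. Neither ingredient does the work. Finiteness of the coefficient group does not force cohomology to vanish, and dimension alone does not kill the relative group in the critical degree: from the long exact sequence of the pair one has
\[
\check H^{n}(X_l;\pi_{n+1}S^n)\longrightarrow \check H^{n+1}(X,X_l;\pi_{n+1}S^n)\longrightarrow \check H^{n+1}(X;\pi_{n+1}S^n)=0,
\]
so $\check H^{n+1}(X,X_l;\pi_{n+1}S^n)$ is a quotient of $\check H^{n}(X_l;\pi_{n+1}S^n)$, which need not vanish for a general $n$-dimensional $X_l$. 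The paper does \emph{not} prove the second assertion for arbitrary locally compact Polish $X$; it proves it for $X=M(A,n-1)$ with the exhaustion $X_l=M(A_l,n-1)$ from Lemma~7.5. The point of that lemma is that each $A_l$ is finitely generated torsion-free, hence free, so $\mathrm{Ext}^1(A_l,\mathbb{Z}/2\mathbb{Z})=0$; combined with $\pi_{n+1}(S^n)\cong\mathbb{Z}/2\mathbb{Z}$ for $n\ge 4$, this gives $\check H^{n}(X_l;\mathbb{Z}/2\mathbb{Z})=0$ and hence the needed relative vanishing. This algebraic input---not dimension or finiteness of stable stems---is what makes Corollary~6.14 applicable, and it is the step your outline is missing.
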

\begin{proof}
By the construction of Moore spaces in \cref{con7.4} and \cref{lem7.5}, we see that $M(A, n-1)$ has an exhaustion consisting of spaces $$ M(A_{1}, n-1) \subseteq \cdots \subseteq M(A_{m}, n-1) \subseteq  M(A_{m+1}, n-1) \cdots$$ that satisfy the hypotheses of \cref{lem7.2}.
The universal coefficient theorem implies that $M(A, n-1)$ satisfies the third hypotheses of \cref{thm5.1}. By construction, the Moore space $M(A, n-1)$ is an n-dimensional CW-complex, which is locally finite and countable. Thus, it can be embedded in $\mathbb{R}^{2n+1}$ by \cite[Theorem 1.5.15]{fritsch_cellular_1990}, and thus has finite covering dimension by \cite[Example 1, pg 59]{fritsch_cellular_1990}.

By \cref{thm5.1}, \cref{lem7.2} and \cite[Theorem 7.4]{bergfalk_definable_2020} we have a definable bijection:
$$
[M(A, n-1), S^{n}] \cong \check{H}^{n}(X; \mathbb{Z}) \cong lim^{1} \mathrm{Hom}(A_{n}, \mathbb{Z}) \cong \mathrm{Ext}^{1}(A; \mathbb{Z})
$$
\cref{thm7.1} implies that we can find a group $A_{\Gamma}$ such that $\mathrm{Ext}^{1}(A_{\Gamma}; \mathbb{Z})$ has potential complexity $\Gamma$.

For the latter statement, we need to apply \cref{cor6.14}. By the universal coefficient theorem, and the long exact sequence of a pair in cohomology, the only condition that is not trivial to verify is the condition that 
$$
\check{H}^{n+1}(M(A, n-1), M(A_{m}, n-1); \pi_{n+1}(S^{n})) =  \check{H}^{n+1}(M(A, n-1), M(A_{m}, n-1); \mathbb{Z} / 2 \mathbb{Z}) = 0
$$
(Note that $\pi_{n+1}(S^{n}) \cong \mathbb{Z} / 2 \mathbb{Z}$ for $n \ge 4$ by \cite[pg. 485]{fomenko_homotopical_2016}).
To verify this condition, the long exact sequence of cohomology reduces to
$$
0 \leftarrow \check{H}^{n+1}(M(A, n-1), M(A_{m}, n-1); \pi_{n+1}(S^{n})) \leftarrow \check{H}^{n}(M(A_{m}, n-1); \mathbb{Z} / 2 \mathbb{Z} ) \leftarrow  \check{H}^{n}(M(A, n-1); \mathbb{Z} / 2 \mathbb{Z} ) \cdots
$$
By the universal coefficient theorem, we have that 
$$
\check{H}^{n}(M(A_{m}, n-1); \mathbb{Z} / 2 \mathbb{Z} ) \cong \mathrm{Ext}^{1}(A_{m}, \mathbb{Z} / 2 \mathbb{Z}) \cong 0
$$
where the latter isomorphism follows since $A_{m}$ can be chosen to be torsion-free by \cref{lem7.5}.
\end{proof}

\begin{definition}\label{def7.7} (see \cite[Definition 13.15]{casarosa_bootstrap_2025})
Suppose that $G/H$ is a group with a Polish cover and $\alpha$ is a continuous action of $\Lambda$ on $G$ that leaves $H$ setwise invariant. We define $\mathrm{Out}(\Lambda \curvearrowright G/H)$ to be the quotient $G/E$, where $E$ is the idealistic equivalence relation given by 
$$
\{ (g , g') \in G \times G : \exists \lambda \in \Lambda, \lambda H g = H g'  \}
$$ 
\end{definition}

\begin{example}\label{exam7.8}
Suppose that $X$ is a locally compact Polish space and $P$ is a polyhedron. We say that a map $f : X \rightarrow X$ is a \emph{homotopy automorphism} if there exists some map $g$ such that $f \circ g \simeq id \simeq g \circ f$. 
Let us write $\mathrm{haut}(X)$ for the subset of $[X, X]$ consisting of homotopy automorphisms. This is a group under composition. Precomposition determines an action
$$
\mathrm{haut}(X) \curvearrowright [X, P]
$$
and we have an idealistic equivalence relation $R$ on $C(X, P)$ given by $ f \simeq g$ if
$$
\gamma \cdot f \simeq g
$$
for some $\gamma \in \mathrm{haut}(X)$ (c.f. \cite[Section 8.7]{bergfalk_definable_2024}). We can thus ask about the potential complexity of this equivalence relation. 

Consider the case $X = M(A, n-1)$ and $P = S^{n}$ for $n \ge 2$.
By the proof of \cref{thm7.6}, we have a definable isomorphism:
\begin{equation}\label{ningbo}
[X, P] \cong \mathrm{Ext}^{1}(H_{n-1}(X), \mathbb{Z}) \cong  \mathrm{Ext}^{1}(A, \mathbb{Z})
\end{equation}

On the other hand, by the discussion of \cite[Definition 13.15]{casarosa_bootstrap_2025}, there is an idealistic equivalence relation on 
under which the action 
$$
\mathrm{haut}(X) \curvearrowright [X, P]
$$
can be identified with the action 
$$\mathrm{haut}(X) \curvearrowright \mathrm{Ext}^{1}(H_{n}(X), \mathbb{Z})$$
given by 
$$
f \cdot a = H_{n}(f)^{*}(a)
$$
In the case that $X$ is a Moore space \cite[Corollary 1.3.10]{baues_homology_1996} states that there is a short exact sequence 
$$
0 \rightarrow \mathrm{Ext}^{1}(A, \Gamma^{n}_{1}(A)) \rightarrow \mathrm{haut}(X) \xrightarrow{H_{n}(-)} \mathrm{Aut}(A) \rightarrow 0
$$
where $\Gamma^{1}_{n}(A)$ is the functor from \cite[Definition 1.3.7]{baues_homology_1996}.

Combining this with \cref{ningbo}, we see that there is a definable bijection:
$$
C(X, S^{n})/R \cong \mathrm{Out}(\mathrm{Aut}(A) \curvearrowright \mathrm{Ext}^{1}(A, \mathbb{Z}))
$$

It then follows from \cite[Theorem 13.25]{casarosa_bootstrap_2025} that for each limit $\lambda < \omega_{1}$ and $ m < \omega$, $m \neq 1$ there exists a space $X$ such that the relation $R$ on $C(X, S^{n})$ is $\Sigma_{\lambda +m + 1}^{0}$-definable but not $\Sigma_{\lambda +m}^{0}$-definable. 
\end{example}

\begin{remark}\label{rmk7.9}

We could have used the definable Huber isomorphism from \cref{thm2.6} and Eilenberg-Maclane spaces to give an example when the homotopy relation has arbitrary complexity, but \cref{thm5.1} allows on to produce a wider range of examples; in our case, the example where the target space is an $n$-sphere. 
\end{remark}

\begin{construction}\label{con7.10}
Suppose that we have a tower of spaces 
$$
X_{0} \xrightarrow{f_{0}} X_{1} \xrightarrow{f_{1}} X_{2} \xrightarrow{f_{2}} \cdots 
$$
Then the \emph{mapping telescope} associated with the tower is the quotient space 
$$
\mathrm{hocolim} \, X_{i}  = \coprod_{n \in \mathbb{N}} [n, n+1] \times X_{n} / \sim
$$
where $\sim$ is the equivalence relation that identifies $(f_{n}(x), n+1)$ with $(x, n)$. Alternatively, it is the quotient space
$$
\mathrm{hocolim} \, X_{n} := \coprod_{n \in \mathbb{N}} M_{f_{n}} / \sim
$$
where $M_{f_{n}}$ denotes the mapping cylinder of $f_{n}$ and $\sim$ is the equivalence relation which identifies the copy of $X_{n}$ in $M_{f_{n-1}}$ with $X_{n} \times 0$ in $M_{f_{n}}$ for each $n > 0$.

The mapping cylinder has a filtration $\tilde{X_{n}}$ given by 
$$
\tilde{X_{n}} = \{ (x, t) \in \mathrm{hocolim} \, X_{n} : t \in [0, n] \}
$$
By using the deformation retract of a mapping cylinder onto the target space, it is easy to see that $\tilde{X_{n}}$ is homotopy equivalent to $X_{n}$.

\end{construction}

\begin{example}\label{exam7.11}

Let $p$ be a prime number. 
Consider the mapping telescope of the collection 
$$
S^{d}_{1} \xrightarrow{m_{p}} S^{d}_{2} \xrightarrow{m_{p}} S^{d}_{3} \cdots
$$
where $m_{p}$ is a map of degree $p$ and $S^{d}_{i}$ a sphere of degree $p$. Consider the mapping telescope 
$$
\mathrm{hocolim} \, S^{d}_{i}
$$
with exhaustion $\{ \tilde{S^{d}_{i}} \}$ as in \cref{con7.10}. 
This mapping cylinder is homeomorphic to the Solenoid complement (see \cite[Section 8.6]{bergfalk_definable_2024}).

Since $\tilde{S^{d}_{i}}$ is a $d$-sphere, we can apply the proof of \cref{thm7.6} to show that we have a definable bijection:
$$
\Ext^{1}(\mathbb{Z}[1/p], \pi_{d+1}(|K|))= \Ext^{1}(H_{d-1}(\mathrm{hocolim} \,  S^{d}_{i}), \pi_{d+1}(|K|)) \cong [\mathrm{hocolim} \, (S^{d}_{i}), |K|]
$$
for any $d+1$-connected, locally finite countable complex $K$.

It follows from \cite[Theorem 9.38 and Theorem 13.12]{casarosa_bootstrap_2025}  $\Ext^{1}(\mathbb{Z}[1/p], \pi_{d+1}(|K|))$ has potential complexity $\Sigma_{2}^{0}$. Thus, the homotopy relation on $C(\mathrm{hocolim} \, S^{d}_{i}, |K|)$ has potential complexity $\Sigma_{2}^{0}$ for each locally finite, countable simplicial complex $K$. 
\end{example}

\bibliographystyle{alpha}
\bibliography{bibliographymartino}

\end{document}